\documentclass[11pt]{amsart}
\usepackage{amssymb,mathrsfs,graphicx,enumerate}
\usepackage{amsmath,amsfonts,amssymb,amscd,amsthm,bbm}
\usepackage{graphicx,colortbl}
\usepackage{mathtools}

\topmargin-0.1in \textwidth6.5in \textheight8.5in \oddsidemargin0in
\evensidemargin0in

\title[Discrete CBO algorithm with random batch interactions and heterogeneous noises]{Convergence analysis of the discrete consensus-based optimization algorithm with random batch interactions and heterogeneous noises}

\author{Dongnam Ko}
\address{Department of Mathematics, The Catholic University of Korea, \\
Jibongro 43, Bucheon, Gyeonggido 14662, Republic of Korea\\
dongnamko@catholic.ac.kr}

\author{Seung-Yeal Ha}
\address{Department of Mathematical Sciences and Research Institute of Mathematics, \\ Seoul National University, Seoul 08826 \\
and School of Mathematics, Korea Institute for Advanced Study,\\
Hoegiro 85, Seoul 02455, Republic of Korea\\
syha@snu.ac.kr}

\author{Shi Jin}
\address{School of Mathematical Sciences,  Institute of Natural Sciences, and MOE-LSC,  \\ 
Shanghai Jiao Tong University, Shanghai 200240, China\\
shijin-m@sjtu.edu.cn}

\author{Doheon Kim}
\address{School of Mathematics, Korea Institute for Advanced Study,\\
Hoegiro 85, Seoul 02455, Republic of Korea\\
doheonkim@kias.re.kr}

\newtheorem{theorem}{Theorem}[section]
\newtheorem{lemma}{Lemma}[section]

\newtheorem{proposition}{Proposition}[section]
\newtheorem{remark}{Remark}[section]

\newtheorem{definition}{Definition}[section]

\newcommand{\bbr}{\mathbb R}

%%%%%%%%%%%%%%%%
\begin{document}
%%%%%%%%%%%%%%%%

\date{\today}

\subjclass[2010]{37M99, 37N30, 65P99} \keywords{Consensus, external noise, interacting particle system, random batch interactions, randomly switching network topology}

\thanks{\textbf{Acknowledgment.}
The work of S.-Y. Ha was supported by National Research Foundation of Korea (NRF-2020R1A2C3A01003881), the work of S. Jin was supported by National Natural Science Foundation of China grant 12031013, Shanghai Municipal Science and Technology Major Project 2021SHZDZX0102, and Science and Technology Commission of Shanghai Municipality grant No. 20JC1414100, the work of D. Kim was supported by a KIAS Individual Grant (MG073901) at Korea Institute for Advanced Study, and the work of D. Ko was supported by the Catholic University of Korea, Research Fund, 2021, and by National Research Foundation of Korea (NRF-2021R1G1A1008559).}

\begin{abstract}
We present stochastic consensus and convergence of the discrete consensus-based optimization (CBO) algorithm with {\it random batch interactions} and {\it heterogeneous external noises}. Despite the wide applications and successful performance in many practical simulations, the convergence of the discrete CBO algorithm was not rigorously investigated in such a generality. In this work, we introduce a generalized discrete CBO algorithm with a weighted representative point and random batch interactions, and show that the proposed discrete CBO algorithm exhibits stochastic consensus and convergence toward the common equilibrium state exponentially fast under suitable assumptions on system parameters. For this, we recast the given CBO algorithm with random batch interactions as a discrete consensus model with a random switching network topology, and then we use the mixing property of interactions over sufficiently long time interval to derive stochastic consensus and convergence estimates in mean square and almost sure senses. Our proposed analysis significantly improves earlier works on the convergence analysis of CBO models with full batch interactions and homogeneous external noises. 
\end{abstract}
\maketitle \centerline{\date}

\tableofcontents

\section{Introduction} \label{sec:1}
\setcounter{equation}{0}
Swarm intelligence \cite{B-D-T,Kennedy} provides remarkable meta-heuristic gradient-free optimization methods. Motivated by collective behaviors \cite{A-B-F} of herds, flock, colonies or schools, several population-based meta-heuristic optimization techniques were proposed in literature \cite{Ya, Y-D}. To name a few, particle swarm optimization \cite{K-E}, ant colony optimization \cite{D-B-S}, genetic algorithm \cite{Ho} and grey wolf optimizer \cite{M-M-L} are popular meta-heuristic algorithms which have attracted researchers from various disciplines over the last two decades. Among them, consensus-based optimization (CBO) algorithm \cite{P-T-T-M, C-C-T-T} is a variant methodology of particle swarm optimization, which adopts the idea of consensus mechanism \cite{C-S, Degroot} in a multi-agent system. The mechanism is a  dynamic process for agent (particle) to approach the common consensus via  cooperation \cite{AS-J, C-J-L-Z, C-J-L, F-H-P-S1, F-H-P-S2, H-J-Kc, H-J-Kd, H-K-K-K-Y, J-L-L,  T-P-B-S}. CBO algorithm was further studied as a system of stochastic differential equations \cite{P-T-T-M} and has also been  applied to handle objective functions on constrained search spaces, for example hypersurfaces \cite{F-H-P-S1}, high-dimensional spaces \cite{C-J-L-Z}, the Stiefel manifold \cite{H-K-K-K-Y, K-K-K-H-Y}, etc.  

In this paper, we are interested in stochastic consensus estimates for a discrete CBO algorithm incorporating two  components: ``{\it heterogeneous external noises}" and ``{\it random batch interactions}". In \cite{C-J-L-Z}, a CBO algorithm with these two aspects was studied numerically, while a previous analytical study of CBO in \cite{H-J-Kd} does not cover these aspects. 

We first briefly describe the discrete CBO algorithm in \cite{C-J-L-Z, H-J-Kd}.
Suppose $L:\mathbb R^d \to \mathbb R$ is non-convex objective function  to minimize. As an initial guess, we introduce initial data of particles ${\bold x}_0^i \in \mathbb R^d$ for $i=1,\dots,N$. This is commonly sampled from random variables as in Monte Carlo methods, independent of other system parameters. Then, the following CBO algorithm \cite{C-J-L-Z, H-J-Kd} governs temporal evolution of each sample point ${\bold x}_n^i$ to find the optimal value among the values on the trajectories of particles:
\begin{equation}  \label{A-ooooo}
\begin{cases}
\displaystyle {\bold x}^i_{n+1} = {\bold x}^i_n -\gamma  ({\bold x}^i_n - {\bar {\bold x}}_n^*)  -\sum_{l=1}^{d} (x^{i,l}_n - {\bar x}_n^{*,l})  \eta_{n}^{l} \bold e_l,  \quad   i \in {\mathcal N}, \\
\displaystyle {\bar {\bold x}}_n^* := \frac{\sum_{j=1}^{N} {\bold x}^j_n e^{-\beta L({\bold x}^j_n)}}{\sum_{j=1}^{N} e^{-\beta L({\bold x}^j_n)}}, \quad n \geq 0,  \quad l = 1, \cdots, d,
\end{cases}
\end{equation}
where the random variables $\{ \eta^l_n \}_{n, l}$ are independent and identically distributed(i.i.d.) with
\begin{equation*} %\label{NB-3}
\mathbb E[\eta_n^l]=0, \quad \mathbb E[|\eta_n^l|^2]=\zeta^2, \quad n \geq 0, \quad l = 1, \cdots, d.
\end{equation*}
 In particular, from the current data of distributed agents, the network interactions between agents share the information of neighbors, and each agent asymptotically approaches to the minimizer of the objective function. Hence, the main role of the CBO algorithm is to make the sample point ${\bold x}_n^i$ converge to the global minimizer ${\bold x}^*$ of $L$. 

In the sequel, we describe a more generalized CBO algorithm which includes \eqref{A-ooooo} as its special case.
Let  ${\bold x}_n^i = (x_n^{i,1},\dots,x_n^{i,d}) \in \mathbb R^d$ be the state of the $i$-th sample point  at time $n$ in the search space, and the set $[i]_n$ denotes an index set of agents interacting with $i$ at time $n$ (neighboring sample points in the interaction network). For notational simplicity, we also set 
\[  {\mathcal X}_n := ( {\bold x}_n^1, \cdots, {\bold x}_n^N ), \quad  {\mathcal N} := \{1, \cdots, N \}. \]
For a nonempty set $S \subset {\mathcal N}$ and an index $j \in {\mathcal N}$, we introduce a weight function $\omega_{S,j}: (\bbr^d)^N \to \bbr_{\geq0}$ which satisfies the following relations:
\begin{equation*}% \label{A-1}
\omega_{S,j}({\mathcal X})\geq0,\quad \sum_{j\in S} \omega_{S,j}({\mathcal X})=1,\quad \omega_{S,j}({\mathcal X})=0~\mbox{ if }~j\notin S,
\end{equation*}
and the convex combination of $\bold x_n^j$:
\begin{equation} \label{A-2}
 {\bar {\bold x}}_n^{S,*}:=\sum_{j=1}^N \omega_{S,j}({\mathcal X}) {\bold x}_n^j\in\operatorname{conv}\{{\bold x}_n^j:j\in S\},\quad\forall~\varnothing\neq S\subseteq  {\mathcal N}.
\end{equation}
Before we introduce our governing CBO algorithm, we briefly discuss two key components (heterogeneous external noises and random batch interactions) one by one. 

First, a sequence of random matrices $\{ (\eta^{i,l}_n)_{1\leq i\leq N, 1\leq l\leq d} \}_{n\geq0}$ are assumed to be i.i.d. with respect to $n\geq0$, with finite first two moments:
\begin{equation*} \label{A-2-1}
\mathbb E[\eta_n^{i,l}]=0, \quad \mathbb E[|\eta_n^{i,l}|^2]\leq\zeta^2, \quad {\zeta\geq0, }\quad {n \geq0,} \quad l = 1, \cdots, d,\quad i \in {\mathcal N}.
\end{equation*}

Second, we discuss ``{\it random batch method (RBM)}" in  \cite{C-J-L-Z, J-L-L}, to introduce random batch interactions.  For the convergence analysis, we need to clarify how RBM provides a randomly switching network. At the $n$-th time instant, we choose a partition of ${\mathcal N}$ randomly, ${\mathcal B}^n = \{{\mathcal B}_1^n,\ldots,{\mathcal B}_{\lceil \frac{N}{P}\rceil}^n\}$ of $\lceil \frac{N}{P}\rceil$ batches (subsets) with sizes at most $P>1$ as follows:
\[ \{1,\ldots, N\} = {\mathcal B}_1^n \cup {\mathcal B}_2^n \cup \cdots \cup {\mathcal B}_{\lceil \frac{N}{P}\rceil}^n, \quad |{\mathcal B}_i^n| = P, \quad i = 1, \cdots, \Big\lceil \frac{N}{P}\Big\rceil-1, \quad |{\mathcal B}_{\lceil \frac{N}{P}\rceil}^n| \leq P. \]
Let $\mathcal A$ be a set of all such partitions for ${\mathcal N}$. Then, the choices of batches are independent at each time $n$ and follow the uniform law in the finite set of partitions $\mathcal A$. The random variables ${\mathcal B}^n : (\Omega,\mathcal F,\mathbb P) \to \mathcal A$ yield random batches at time $n$ for each realization $\omega \in \Omega$, which are also assumed to be independent of the  initial data $\{{\bold x}_0^i\}_i$ and noise $\{\eta_n^{i,l}\}_{n,i,l}$.
Thus, for each time $n$, the index set of neighbors $[i]_n \in {\mathcal B}^n$ is an element of the partition from ${\mathcal N}$ containing $i$. With the aforementioned key components, we are ready to consider the Cauchy problem to the generalized discrete CBO algorithm under random batch interactions and heterogeneous noises:
\begin{equation}  \label{A-3}
\begin{cases}
\displaystyle {\bold x}^i_{n+1} ={\bold x}^i_n -\gamma  ({\bold x}^i_n - {\bar {\bold x}}_n^{[i]_n,*})  -\sum_{l=1}^{d} (x^{i,l}_n - {\bar x}_n^{[i]_n,*,l})  \eta_{n}^{i,l} \bold   e_l, \quad  n \geq 0, \\
\displaystyle {\mathcal X}_0 = ( {\bold x}_0^1, \cdots, {\bold x}_0^N )\in\mathbb R^{dN}, \quad i \in {\mathcal N},
\end{cases}
\end{equation}
where the initial data ${\mathcal X}_0$ can be deterministic  or random, and the drift rate $\gamma \in (0, 1)$ is a positive constant. In this way, the dynamics \eqref{A-3} includes random batch interactions and heterogeneous noises.

In \cite{H-J-Kd}, the consensus analysis and error estimates for \eqref{A-ooooo} have been done, which is actually the following special case of \eqref{A-3}: for a positive constant $\beta$,
\begin{equation} \label{A-3-1}
{\bar {\bold x}}_n^{[i]_n,*} =  \sum_{j=1}^N   \frac{ e^{-\beta L({\bold x}_n^j)}  }{\sum_{k=1}^{N} e^{-\beta L({\bold x}_n^k)}} {\bold x}_n^j\quad \mbox{and} \quad \eta_{n}^{i,l} = \eta_n^l, \quad  \forall~i \in {\mathcal N}. 
\end{equation}
Here both right-hand sides are independent of $i$, namely the full batch $P=N$ is used, all agent interacts with all the other agents, and the noise is the same for all particles. 
We refer to Section \ref{sec:2.1} for details.

 In this paper, we are interested in the following two questions. \newline
\begin{itemize}
\item 
(Q1:~Emergence of stochastic consensus state)~Is there a common consensus state ${\bold x}_\infty \in \mathbb R^d$ for system \eqref{A-3}, i.e., 
\[ \lim_{n \to \infty} {\bold x}_n^i = {\bold x}_\infty  \quad \mbox{in suitable sense,} \quad \mbox{for all~$i \in {\mathcal N}$}?  \]
\vspace{0.1cm}
\item 
(Q2:~Optimality of the stochastic consensus state)~If so, then is the consensus state ${\bold x}_\infty \in \mathbb R^d$ an optimal point?
\[ L({\bold x}_\infty) = \inf\{L({\bold x}) : {\bold x} \in \mathbb R^d\}. \]
\end{itemize}

\vspace{0.2cm}

Among the above two posed questions, we will mostly focus on the first question (Q1).  In fact,  the continuous analogue for \eqref{A-3}  has been dealt with in literature. In the first paper of CBO \cite{AS-J}, the CBO algorithm without external noise ($\sigma=0$) has been studied, and  the formation of consensus  has been proved under suitable assumption on the network structure. However, the optimality was only shown by numerical simulation even for this deterministic case. In \cite{C-J-L-Z}, the dynamics \eqref{A-3} has been investigated via a mean-field limit ($N \to \infty$). In this case, the particle density distribution, which is the limit of the empirical measure $\frac{1}{N} \sum_{i} \delta_{{\bold x}_n^i}$ in $N \to \infty$, satisfies   a nonlinear Fokker-Planck equation. By using the analytical structure of the Fokker-Planck equation, the consensus estimate  was
established. The optimality of the limit value is also estimated by Laplace's principle, as $\beta \to \infty$ (see Section \ref{sec:2}). Rigorous convergence and error estimates to the continuous and discrete algorithms for particle system \eqref{A-3} with {\it fixed} $N$  were addressed in authors' recent works \cite{H-J-Kc, H-J-Kd} for the special setting \eqref{A-3-1}.

Before we present our main results, we introduce several concepts of stochastic consensus and convergence as follows.
\begin{definition} \label{D1.1}\
Let $\{{\mathcal X}_n \}_{n\geq0}= \{ \bold x_n^i: 1\leq i\leq N \}_{n\geq0}$ be a discrete stochastic process whose dynamics is governed by \eqref{A-3}. Then, several concepts of stochastic consensus and convergence can be defined as follows.
\begin{enumerate}
\item
The random process $\{{\mathcal X}_n \}_{n\geq0}$ exhibits ``consensus in expectation'' or ``almost sure consensus'', respectively if 
\begin{equation} \label{A-4}
\lim_{n \to \infty} \max_{i,j} \mathbb E \|{\bold x}_n^i - {\bold x}_n^j \| = 0 \quad  \mbox{or} \quad 
 \lim_{n \to \infty}  \|{\bold x}_n^i - {\bold x}_n^j \| = 0,~~\forall~~i,j \in {\mathcal N} \quad \mbox{a.s.},
 \end{equation}
 where $\| \cdot \|$ denotes the standard $\ell^2$-norm in ${\mathbb R}^d$. 
 
\vspace{0.2cm}
\item
The random process $\{{\mathcal X}_n \}_{n\geq0}$ exhibits ``convergence in expectation'' or ``almost sure convergence'', respectively if there exists a random vector $\bold x_\infty$ such that 
\begin{equation} \label{A-5}
\lim_{n \to \infty} \max_{i}  \mathbb E \|{\bold x}_n^i - {\bold x}_\infty \|  = 0 \quad  \mbox{or} \quad 
 \lim_{n \to \infty}  \|{\bold x}_n^i - {\bold x}_\infty \| = 0,~~\forall~~i \in {\mathcal N} \quad \mbox{a.s.}
 \end{equation}
\end{enumerate}
\end{definition}
In addition, we introduce some terminologies. For a given set of $N$ vectors $\{ {\bold x}^i \}_{i=1}^{N}$,  we use $X$  to denote the  $N \times d$ matrix whose $i$-th row is $({\bold x}^i)^\top$,
 and the $l$-th column vector $\mathfrak{x}^l$, as
\[ X := \left(
\begin{array}{ccc}
x^{1,1} & \cdots & x^{1,d} \\
\vdots & \vdots & \cdots \\
x^{N,1} & \cdots & x^{N,d}
\end{array}
\right), \quad  {\bold x}^i  := \left(
\begin{array}{c}
x^{i,1} \\
\vdots \\
x^{i,d} 
\end{array}
\right),\quad  {\mathfrak x}^{l}  := \left(
\begin{array}{c}
x^{1,l} \\
\vdots \\
x^{N,l} 
\end{array}
\right), \quad {\mathcal D}(\mathfrak x^l) := \max_{i} x^{i,l} - \min_i x^{i,l}.  \]
The main results of this paper are two-fold. First, we present stochastic consensus estimates for \eqref{A-4} in the sense of Definition \ref{D1.1} (1). For this, we begin with the dynamics of the $l$-th column vector $\mathfrak x^l$: 
\[
{\mathfrak x}_{n+1}^l = (A_n + B_n) {\mathfrak x}_{n}^l, \quad n \geq 0
\]
for some random matrices $A_n$ and $B_n$ corresponding to random source due to, first, only random batch interactions, and second, interplay between the external white noises $\eta_n^{i,l}$ and random batch 
interactions, respectively (see Section \ref{sec:2.1} for details). Then the diameter functional ${\mathcal D}(\mathfrak x^l)$ satisfies 
\begin{equation}\label{A-5-1}
{\mathcal D}(\mathfrak x_{n+1}^l) \leq (1-\alpha(A_n + B_n)){\mathcal D}(\mathfrak x_{n}^l),
\end{equation}
where $\alpha(A)$ is the ergodicity coefficient of matrix $A$ (see Definition \ref{D3.1}). \newline

For the case \eqref{A-3-1} in which interactions are full batch ($P=N$) with a constant weight function $\omega_{\mathcal N,j}$ and the noises are homogenous, one can show that the ergodicity $\alpha(A_n+B_n)$ is strictly positive for small $\zeta$. Then, the recursive relation \eqref{A-5-1} yields desired exponential consensus. Moreover, the assumption \eqref{A-3-1} also implies that the dynamics of $x^{i,l}_n - x^{j,l}_n$ follows a closed form of stochastic difference equation (see \cite{H-J-Kd}). Then, following the idea of geometric Brownian motion from \cite{H-J-Kc}, the decay of $x^{i,l}_n - x^{j,l}_n$ can be obtained.
However, in our setting \eqref{A-3}, the aforementioned blue picture breaks down. In fact, we cannot even guarantee the nonnegativity of $\alpha(A_n + B_n)$. From the definition of ergodic coefficient, the positive ergodicity coefficient implies that any two particles are attracted thanks to network structure, which is not possible when particles are separated by random batches. Hence, the recursive relation \eqref{A-5-1} is not enough to derive an exponential decay of ${\mathcal D}(\mathfrak x_{n}^l)$ as it is.
On the other hand, to quantify {\it enough mixing effect} via the network topology, we use the $m$-transition relation with $m \gg 1$ so that 
\[  0< \alpha \Big( \prod_{k=n}^{n + m -1} (A_k + B_k) \Big) < 1. \]
In this case, the $m$-transition relation satisfies 
\[ {\mathfrak x}_{n+m}^l =  \prod_{k=n}^{n + m -1} (A_k + B_k) {\mathfrak x}_{n}^l, \quad n \geq 0. \]
Again, we use the elementary property of ergodicity coefficient presented in Lemma \ref{L3.3} to find 
\[  {\mathcal D}(\mathfrak x_{n+m}^l) \leq (1-\alpha((A_{n+m-1}+B_{n+m-1})(A_{n+m-2}+B_{n+m-2})\cdots (A_{n}+B_{n}))){\mathcal D}(\mathfrak x_{n}^l). \]
One of the crucial novelties of this paper lies on the estimation technique for a product of matrix summations. When there is no noise $(B_k=0)$, we can use analysis on randomly switching topologies \cite{D-H-J-K2} to derive the positivity of  the ergodic coefficient. In order to handle heterogenous external noises, we adopt the concept of negative ergodicity from general $n$-by-$n$ matrices and treat $B_k$ as a perturbation (see Lemma \ref{L4.1} and Lemma \ref{L4.2} for details). 
This yields an exponential decay of ${\mathcal D}(\mathfrak x^l)$ in suitable stochastic senses for  a sufficiently small $\zeta$, the variance of the noise (see Theorem \ref{T4.1} and Theorem \ref{T4.2}): there exist positive constants $\Lambda_1$, $\Lambda_2$, $C_1$ and a random variable $C_2=C_2(\omega)$ such that 
\begin{align}
\begin{aligned} \label{A-6}
& \mathbb E\mathcal D(\mathfrak x_n^l) \leq C_1 \exp\left(-\Lambda_1 n\right)\quad\text{and}\quad \mathcal D(\mathfrak x_n^l)\leq C_2(\omega) \exp\left(-\Lambda_2 n\right) \quad \mbox{a.s.}
\end{aligned}
\end{align}
This implies
\begin{align*}
\begin{aligned}
& {\mathbb E}\|\bold x_n^i- \bold x_n^j \| \leq C_1 e^{-\Lambda_1 n} \quad\text{and}\quad
\max_{i,j} \|\bold x_n^i- \bold x_n^j \| \leq C_2(\omega) e^{-\Lambda_2 n} \quad \mbox{a.s.} 
\end{aligned}
\end{align*}
Second, we deal with the convergence analysis \eqref{A-5} of stochastic flow $\{ \bold x_n^i \}$. One first sees that the dynamics of $\mathfrak x_n^l$ can be described as 
\begin{equation} \label{A-7}
\mathfrak x_{n}^l-\mathfrak x_{0}^l =-\gamma \sum_{k=0}^{n-1}\left( I_N-  W_k\right)\mathfrak x_k^l  - \sum_{k=0}^{n-1}H_k^l(I_N-W_k)\mathfrak x_k^l,\quad 1\leq l\leq d,~ n\geq0.
\end{equation}
Then, applying the strong law of large numbers as in \cite{H-J-Kd}, the almost sure exponential convergence \eqref{A-6} for $\mathcal D(\mathfrak x_n^l)$ induces the convergence of the first series in the R.H.S. of \eqref{A-7}. On the other hand, the convergence of the second series will be shown using Doob's martingale convergence theorem. Thus, for sufficiently small $\zeta$, there exists a random variable ${\bold x}_\infty$ such that
\[
\lim_{n\to\infty}{\bold x}_n^i= {\bold x}_\infty\quad\mbox{a.s.,}\quad i \in {\mathcal N}.
\]
We refer to Lemma \ref{L5.1} for details. Moreover, the above convergence result can be shown to be exponential in expectation and almost sure senses. A natural estimate yields
\[  |x_{m}^{i,l}-  x_{n}^{i,l}| \leq \sum_{k=n}^{m-1}(\gamma+| \eta_k^{i,l}|)\mathcal D(\mathfrak x_k^{l}), \]
however, the external noises $\eta_n^{i,l}$ are not uniformly bounded in almost sure sense.
Instead, the noises with decay in time, $\eta_n^{i,l}e^{-\varepsilon n}$, is uniformly bounded for any $\varepsilon>0$, as presented in Lemma \ref{L5.2}. This is a key lemma employed in the derivation of exponential convergence: there exists a positive constant $C_3 > 0$ and a random variable $C_4(\omega)$ such that 
\[ {\mathbb E} \| {\bold x}_{\infty}^{i}- {\bold x}_{n}^{i} \|  \leq C_3 e^{-\Lambda_1 n} \quad \mbox{and} \quad \| \bold x_\infty - \bold x_n^i \| \leq C_4(\omega) e^{-\Lambda_2 n} \quad \mbox{a.s.}
\]
(See Theorem \ref{T5.1} for details).  \newline

The rest of this paper is organized as follows. In Section \ref{sec:2}, we briefly present the discrete CBO algorithm \cite{C-J-L-Z} and review consensus and convergence result \cite{H-J-Kd} for the case of homogeneous external noises. In Section \ref{sec:3}, we study stochastic consensus estimates in the absence of external noises so that the randomness in the dynamics is mainly caused by random batch interactions. In Section \ref{sec:4}, we consider the stochastic consensus arising from the interplay of two random sources (heterogeneous external noises and random batch interactions) when the standard deviation of heterogeneous external noise is sufficiently small. In Section \ref{sec:5}, we provide a stochastic convergence analysis for the discrete CBO flow based on the consensus results. In Section \ref{sec:6}, we provide monotonicity of the maximum of the objective function along the discrete CBO flow, and present several numerical simulations to confirm analytical results. Finally, Section \ref{sec:7} is devoted to a brief summary of our main results and some remaining issues to be explored in a future work. 

\vspace{0.5cm}

\noindent {\bf Gallery of notation}:  We use superscript to denote the particle number and components of a vector, for example, $x^{i,k}$ denotes the $k$-th component of the $i$-th's particle, i.e.,
\[ {\bold x}^{i} = (x^{i,1}, \cdots, x^{i,d}) \in {\mathbb R}^d, \quad i \in {\mathcal N}. \]
We also use simplified notation for summation and maximization:
\[ \sum_{i} := \sum_{i=1}^{N}, \quad  \sum_{i,j} := \sum_{i=1}^{N} \sum_{j=1}^{N}, \quad \max_{i} := \max_{1\leq i \leq N}, \quad \max_{i,j} := \max_{1 \leq i, j \leq N}.  \]
For a given state vector ${\mathfrak x} = (x^1, \cdots, x^N) \in \bbr^N$, we introduce its state diameter ${\mathcal D}({\mathfrak x})$:
\begin{equation} \label{A-8}
{\mathcal D}(\mathfrak x) :=\max_{i, j} (x^i-x^j) = \max_{i} x^i - \min_{i} x^i. 
\end{equation}
For $\bold x = (x^1, \cdots, x^d) \in \bbr^d$, we denote its $\ell^\infty$-norm by $\|\bold x\|_\infty := \displaystyle\max_{k}|x^k|$.   \\
Lastly, for integers $n_1\leq n_2$ and square matrices $M_{n_1},M_{n_1+1},\dots, M_{n_2}$ with the same size, we define
\[
\prod_{k=n_1}^{n_2}M_k:=M_{n_2}\cdots M_{n_1+1}M_{n_1}.
\]
Note that the ordering in the product is important, because the matrix product is not commutative in general.
\section{Preliminaries} \label{sec:2}
\setcounter{equation}{0}
In this section, we briefly present the discrete CBO algorithm \eqref{A-3} and then recall convergence and optimization results from  \cite{H-J-Kd}.
\subsection{The CBO algorithms} \label{sec:2.1}
Let  ${\bold x}_t^i = (x_t^{i,1},\dots,x_t^{i,d}) \in \mathbb R^d$ be the state of the $i$-th sample point  at time $t$ in search space, and we assume that state dynamics is governed by the CBO algorithm introduced in \cite{C-J-L-Z}. 
\begin{equation} \label{NB-1}
\begin{cases}
\displaystyle d{\bold x}^i_t = -\gamma ({\bold x}^i_t - {\bar {\bold x}}_t^*)dt  - \sigma  \sum_{l=1}^{d} (x^{i,l}_t - {\bar x}_t^{*,l}) dW_t^{i,l}\bold   e_l, \quad t > 0, \quad i \in {\mathcal N}, \\
\displaystyle {\bar {\bold x}}_t^* ={ (\bar x_t^{*,1}, \cdots,\bar x_t^{*,d}) } := \frac{\sum_{j=1}^{N} {\bold x}^j_t e^{-\beta L({\bold x}^j_t)}}{\sum_{k=1}^{N} e^{-\beta L({\bold x}^k_t)}}.
\end{cases}
\end{equation}
Here $\gamma \in (0,1)$, $\sigma$ and $\beta>0$ are  drift rate, noise intensity and reciprocal of temperature, respectively. $\{ \bold e_l\}_{l=1}^d$ is the standard orthonormal basis in $\mathbb R^d$, and $W_t^{i,l}$ $(l=1,\dots,d,~i=1,\dots,N)$ are i.i.d. standard one-dimensional Brownian motions. \newline

Note the following discrete analogue of Laplace's principle \cite{D-Z}: 
\[
\lim_{\beta\to\infty}\frac{\sum_{j\in {\mathcal N}} {\bold x}^j e^{-\beta L({\bold x}^j)}}{\sum_{j\in {\mathcal N}} e^{-\beta L({\bold x}^j)}}=\mbox{centroid of the set}~\left\{{\bold x}^j ~:~  L({\bold x}^j) = \min_{k \in {\mathcal N}}L({\bold x}^k) \right\}.
\]
This explains why CBO algorithm \eqref{NB-1} is expected to pick up the minimizer of $L$. \newline

Next, we consider time-discretization of \eqref{NB-1}. For this, we set 
\[ h := \Delta t, \quad {\bold x}^i_n := {\bold x}^i_{nh}, \quad n \geq 0, \quad i \in {\mathcal N}. \]
Consider the following three discrete algorithms based on \cite{C-J-L-Z, H-J-Kd}: for $n\geq 0,~~ i = 1, \cdots, N,$
\begin{align*}
\begin{aligned}
& \mbox{Model A}: \quad {\bold x}^i_{n+1} ={\bold x}_n^i -\lambda h ({\bold x}^i_n - {\bar {\bold x}}_n^*)  -  \sum_{l=1}^{d} (x^{i,l}_n - {\bar x}_n^{*,l}) \sigma \sqrt{h}Z_n^l \bold e_l, \\
& \mbox{Model B}: \quad 
\begin{cases}
\displaystyle \hat {\bold x}^i_{n} = {\bar {\bold x}}_n^*+ e^{-\lambda h} ({\bold x}^i_n - {\bar {\bold x}}_n^*),\\
\displaystyle {\bold x}^i_{n+1} =\hat {\bold x}_n^i  -  \sum_{l=1}^{d} (\hat x^{i,l}_n - {\bar x}_n^{*,l}) \sigma \sqrt{h}Z_n^l \bold e_l, 
\end{cases} \\
& \mbox{Model C}: \quad {\bold x}^i_{n+1} ={\bar {\bold x}}_n^*+  \sum_{l=1}^{d} (x^{i,l}_n - {\bar x}_n^{*,l})  \left[\exp\left(-\left(\lambda+\frac{1}{2}\sigma^2\right)h+\sigma\sqrt{h}Z_n^l \right)\right]\bold e_l, 
\end{aligned}
\end{align*}
where the random variables $\{ Z^l_n \}_{n, l}$ are i.i.d  standard normal distributions. Model A is precisely the Euler-Maruyama scheme applied to \eqref{NB-1}, and Models B and C are other discretizations of \eqref{NB-1} introdueced in \cite{C-J-L-Z}. In \cite{H-J-Kd}, it is shown that the three models are special cases (with different choices of $\gamma$ and $\eta_n^l$) of the following generalized scheme:

\begin{equation}  \label{NB-2}
\begin{cases}
\displaystyle {\bold x}^i_{n+1} = {\bold x}^i_n -\gamma  ({\bold x}^i_n - {\bar {\bold x}}_n^*)  -\sum_{l=1}^{d} (x^{i,l}_n - {\bar x}_n^{*,l})  \eta_{n}^{l} \bold e_l,  \quad   i \in {\mathcal N}, \\
\displaystyle {\bar {\bold x}}_n^* := \frac{\sum_{j=1}^{N} {\bold x}^j_n e^{-\beta L({\bold x}^j_n)}}{\sum_{j=1}^{N} e^{-\beta L({\bold x}^j_n)}}, \quad n \geq 0,  \quad l = 1, \cdots, d,
\end{cases}
\end{equation}
where the random variables $\{ \eta^l_n \}_{n, l}$ are i.i.d. with
\begin{equation*} %\label{NB-3}
\mathbb E[\eta_n^l]=0, \quad \mathbb E[|\eta_n^l|^2]=\zeta^2, \quad n \geq 0, \quad l = 1, \cdots, d.
\end{equation*}
%ote that the stochastic process  $\{ \eta^l_n \}_{n, l}$ certainly includes the  Gaussian process. In this sense, the discrete model \eqref{NB-2} clearly generalizes the discrete model studied in \cite{C-J-L-Z}. 
Moreover, discrete algorithm \eqref{NB-2} corresponds to the special case of \eqref{A-3}:
\[ { S} := {\mathcal N}, \qquad \omega_{{\mathcal N}, j} = \frac{e^{-\beta L({\bold x}^j)}}{\sum_{k=1}^{N} e^{-\beta L({\bold x}^k)}}, \quad F_{{\mathcal N}}(X):=\sum_{j=1}^N   \frac{{\bold x}^j e^{-\beta L({\bold x}^j)}  }{\sum_{k=1}^{N} e^{-\beta L({\bold x}^k)}},\quad \eta_n^l\equiv\eta_n^{i,l}.  \]
Therefore, these three models can be explained by the dynamics \eqref{A-3}.% which is in our concern.

\subsection{Previous results} In this subsection, we briefly summarize the previous results \cite{H-J-Kd} on the convergence and error estimates for the case
%following simplified version of the discrete CBO algorithm \eqref{NB-2}--\eqref{NB-3}:
%\begin{equation}  \label{NB-5}
%\begin{cases}
%\displaystyle {\bold x}^i_{n+1} = {\bold x}^i_n -\gamma  ({\bold x}^i_n - {\bar {\bold x}}_n^*)  -\sum_{l=1}^{d} (x^{i,l}_n - {\bar x}_n^{*,l})  \eta_{n}^{l} \bold e_l,  \quad   i \in {\mathcal N}, \\
%\displaystyle {\bar {\bold x}}_n^* := \frac{\sum_{j=1}^{N} {\bold x}^j_n e^{-\beta L({\bold x}^j_n)}}{\sum_{j=1}^{N} e^{-\beta L({\bold x}^j_n)}}, \quad \mathbb E[\eta_n^l]=0, \quad \mathbb E[|\eta_n^l|^2]=\zeta^2, \quad n %\geq 0,  \quad l = 1, \cdots, d,
%\end{cases}
%\end{equation}
\begin{equation*}
\label{2.4}
\eta_n^{i,l}  \equiv \eta_n^l
\end{equation*}
 (thus noises are not heterogeneous). 
In the next theorem, we recall the convergence estimate of the discrete scheme \eqref{NB-2} as follows.
\begin{theorem} \label{T2.1}
\emph{(Convergence estimate \cite{H-J-Kd})} Suppose system parameters $\gamma$ and $\zeta$ satisfy 
\[ (\gamma-1)^2 + \zeta^2 < 1,    \]
and let $\{ {\bold x}_t^i  \}$ be a solution process to \eqref{NB-2}.  Then, one has the following assertions:
\begin{enumerate}
\item
Consensus in expectation and in almost sure sense emerge asymptotically: 
\[ \lim_{n \to \infty} \mathbb E|{\bold x}^{i}_n - {\bold x}^{j}_n|^2 = 0, \quad |x^{i,l}_n - x^{j,l}_n |^2\leq |x^{i,l}_0 - x^{j,l}_0|^2 e^{-n Y_n^l},  \quad \mbox{a.s.}~\omega \in \Omega,
\]
for $ i, j \in {\mathcal N},~~l = 1, \cdots, d,$ and a random variable $Y_n^l$ satisfying
\[ \lim_{n \to \infty} Y_n^l(\omega) = 1 - (\gamma - 1)^2 - \zeta^2 > 0, \quad \mbox{a.s.}~\omega \in \Omega. \]
\item
There exists a common random vector ${\bold x}_\infty=(x_\infty^1,\cdots,x_\infty^d)$ such that
\[  \lim\limits_{n\to\infty}  {\bold x}_n^i= {\bold x}_\infty~\mbox{a.s.}, \quad i \in {\mathcal N}. \]
\end{enumerate}
\end{theorem}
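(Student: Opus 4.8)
The plan is to exploit the homogeneity of the noise, which is the single structural feature that makes the full-batch case \eqref{NB-2} amenable to elementary analysis. Because both the representative point $\bar{\bold x}_n^*$ and the scalar multiplier $\gamma+\eta_n^l$ are common to every particle, the pairwise difference in each coordinate obeys a \emph{closed} scalar recursion. First I would subtract the $i$-th and $j$-th equations of \eqref{NB-2} coordinatewise, so that the common factor $\bar x_n^{*,l}$ cancels and
\[ x_{n+1}^{i,l}-x_{n+1}^{j,l}=(1-\gamma-\eta_n^l)\,(x_n^{i,l}-x_n^{j,l}),\qquad 1\le l\le d,~n\ge0. \]
Iterating yields the product representation $x_n^{i,l}-x_n^{j,l}=\big(\prod_{k=0}^{n-1}(1-\gamma-\eta_k^l)\big)(x_0^{i,l}-x_0^{j,l})$, the discrete analogue of the geometric Brownian motion exploited in \cite{H-J-Kc}; everything in part (1) then flows from analyzing this product.

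For consensus in expectation I would square this identity, take expectations, and use that $\{\eta_k^l\}_k$ are i.i.d.\ and independent of the initial data, so that the expectation factorizes over $k$. With $\mathbb E[\eta_k^l]=0$ and $\mathbb E[|\eta_k^l|^2]=\zeta^2$ one has $\mathbb E[(1-\gamma-\eta_k^l)^2]=(\gamma-1)^2+\zeta^2$, whence
\[ \mathbb E|x_n^{i,l}-x_n^{j,l}|^2=\big((\gamma-1)^2+\zeta^2\big)^n\,\mathbb E|x_0^{i,l}-x_0^{j,l}|^2, \]
which tends to $0$ under the hypothesis $(\gamma-1)^2+\zeta^2<1$; summing over $l$ gives $\mathbb E|{\bold x}_n^i-{\bold x}_n^j|^2\to0$. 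For the almost sure statement I would pass to the squared product and apply the elementary bound $\log t\le t-1$ termwise to write $\prod_{k=0}^{n-1}(1-\gamma-\eta_k^l)^2\le e^{-nY_n^l}$ with $Y_n^l:=1-\tfrac1n\sum_{k=0}^{n-1}(1-\gamma-\eta_k^l)^2$. The strong law of large numbers applied to the i.i.d.\ sequence $(1-\gamma-\eta_k^l)^2$ then gives $Y_n^l(\omega)\to1-(\gamma-1)^2-\zeta^2>0$ almost surely, which is precisely the claimed decay rate.

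For part (2) I would show that each scalar sequence $\{x_n^{i,l}\}_n$ is almost surely Cauchy and then invoke consensus to identify the limits. Rewriting the dynamics as $x_{n+1}^{i,l}-x_n^{i,l}=-(\gamma+\eta_n^l)(x_n^{i,l}-\bar x_n^{*,l})$ and using that $\bar x_n^{*,l}$ is a convex combination of the $x_n^{j,l}$, so that $|x_n^{i,l}-\bar x_n^{*,l}|\le\mathcal D(\mathfrak x_n^l)$, I obtain the increment estimate $|x_{n+1}^{i,l}-x_n^{i,l}|\le(\gamma+|\eta_n^l|)\,\mathcal D(\mathfrak x_n^l)$, where $\mathcal D(\mathfrak x_n^l)$ decays exponentially by part (1). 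It then suffices to prove $\sum_n(\gamma+|\eta_n^l|)\,\mathcal D(\mathfrak x_n^l)<\infty$ almost surely, from which convergence of $x_n^{i,l}$ to some $x_\infty^{i,l}$ follows.

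The one genuine obstacle is that $\eta_n^l$ is not uniformly bounded along trajectories, so the increment bound does not immediately telescope to a finite sum. I would resolve this by the same device that underlies Lemma \ref{L5.2}: since $\mathbb P(|\eta_n^l|>e^{\varepsilon n})\le\zeta^2 e^{-2\varepsilon n}$ is summable, Borel--Cantelli guarantees that almost surely $|\eta_n^l|\le e^{\varepsilon n}$ for all large $n$, so that $|\eta_n^l|e^{-\varepsilon n}$ is almost surely bounded. Choosing $\varepsilon$ smaller than the exponential consensus rate from part (1) makes $\sum_n(\gamma+|\eta_n^l|)\,\mathcal D(\mathfrak x_n^l)$ a convergent series almost surely, so $x_n^{i,l}$ converges a.s.\ to a limit $x_\infty^{i,l}$. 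Finally, since $|x_n^{i,l}-x_n^{j,l}|\to0$ by part (1), all $N$ limits coincide, giving a common random vector ${\bold x}_\infty=(x_\infty^1,\dots,x_\infty^d)$ with ${\bold x}_n^i\to{\bold x}_\infty$ almost surely.
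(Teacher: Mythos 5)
Your proof of part (1) is correct and coincides with the argument in \cite{H-J-Kd} that this theorem quotes: the homogeneity of the noise closes the pairwise recursion $x_{n+1}^{i,l}-x_{n+1}^{j,l}=(1-\gamma-\eta_n^l)(x_n^{i,l}-x_n^{j,l})$, the second moments factorize (by independence of $\eta_n^l$ from the past) to give the exact rate $\big((\gamma-1)^2+\zeta^2\big)^n$, and the almost sure bound follows from $t\le e^{t-1}$ applied termwise to $t=(1-\gamma-\eta_k^l)^2$ together with the strong law of large numbers --- exactly the ``geometric Brownian motion'' mechanism the paper attributes to \cite{H-J-Kc,H-J-Kd} in its introduction. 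The only implicit hypothesis, shared with the source, is $\mathbb E|x_0^{i,l}-x_0^{j,l}|^2<\infty$ when the initial data are random.

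For part (2) you take a genuinely different route. The proof in \cite{H-J-Kd} --- reproduced in generalized form as Lemma \ref{L5.1} of this paper --- telescopes the dynamics into a drift series and a noise series: the drift series converges absolutely because the diameter decays exponentially, while the noise series is an $L^2$-bounded martingale (the summands $H_k^l(I_N-W_k)\mathfrak x_k^l$ are independent with zero mean), so Doob's martingale convergence theorem yields almost sure convergence. You instead argue entirely pathwise: the increment bound $|x_{n+1}^{i,l}-x_n^{i,l}|\le(\gamma+|\eta_n^l|)\,\mathcal D(\mathfrak x_n^l)$, the a.s.\ exponential decay of the diameter (which in this homogeneous full-batch case is available in the closed form $\mathcal D(\mathfrak x_n^l)=\mathcal D(\mathfrak x_0^l)\prod_{k=0}^{n-1}|1-\gamma-\eta_k^l|$), and Chebyshev plus Borel--Cantelli to dominate $|\eta_n^l|$ by $e^{\varepsilon n}$ for all large $n$, with $\varepsilon$ chosen below the consensus rate. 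This is precisely the device the paper isolates as Lemma \ref{L5.2}, but the paper deploys it only later, to upgrade mere convergence to an exponential rate in Theorem \ref{T5.1}; you use it to obtain convergence itself. The trade-off: your argument avoids martingale theory altogether and delivers an almost sure exponential convergence rate toward ${\bold x}_\infty$ for free, but it leans on pathwise exponential decay of the diameter, whereas the martingale route needs only mean-square summability and therefore transfers unchanged to the heterogeneous-noise, random-batch setting of Section \ref{sec:5}, where no closed pairwise recursion exists. Both are complete proofs of the stated theorem.
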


\begin{remark}
In \cite{H-J-Kd}, an error estimate of \eqref{NB-2} toward the global minimum is also obtained as a partial result on optimality. % for intermediate values of $\beta$. 
If the initial guess ${\bold x}_{in}$ is good enough so that it surrounds the global minimum ${\bold x}_*$ and particles are  close to each other, then the asymptotic consensus state ${\bold x}_\infty$ is close to ${\bold x}_*$.
\end{remark}

\section{Discrete CBO algorithm with random batch interactions}\label{sec:3}
\setcounter{equation}{0}
In this section, we present matrix formulation of \eqref{A-3}, some elementary estimates for ergodicity coefficient and stochastic consensus to system \eqref{A-3} in the absence of heterogeneous external noises. 

\subsection{A matrix reformulation} \label{sec:3.1}
In this subsection, we present a matrix formulation of the following discrete CBO model:
\begin{equation} \label{C-0}
{\bold x}^i_{n+1} = {\bold x}^i_n -\gamma  ({\bold x}^i_n - {\bar {\bold x}}_n^{[i]_n,*})  -\sum_{l=1}^{d} (x^{i,l}_n - {\bar x}_n^{[i]_n,*,l})  \eta_{n}^{i,l} \bold e_l, \quad n \geq 0, \quad  i  \in {\mathcal N}.
\end{equation}
Then, the $l$-th component of \eqref{C-0} can be rewritten as
\begin{equation}  \label{C-1}
x^{i,l}_{n+1} =(1-\gamma-\eta_{n}^{i,l} )x^{i,l}_n  +   (\gamma+\eta_{n}^{i,l} )\sum_{j\in[i]_n} \omega_{[i]_n,j}(X_n) x_n^{j,l}.
\end{equation}
For $n \geq 0$ and $l = 1, \cdots, d$,  set
\begin{equation} \label{C-1-1}
W_n :=(\omega_{[i]_n,j}(X_n)) \in  \bbr^{N \times N}, \quad {\mathfrak x}_n^{l}:=(x_n^{1,l},\dots,x_n^{N,l})^\top, \quad H_n^l :=\operatorname{diag}(\eta_n^{1,l},\dots,\eta_n^{N,l}).
\end{equation}
Note that $ {\mathfrak x}_n^{l}$ denotes the $l$-th column of the matrix $X_n$. Then system \eqref{C-1} can be rewritten in the following matrix form:
\begin{align}\label{C-2}
\begin{aligned}
{\mathfrak x}_{n+1}^l%&=\left[(1-\gamma)I_N-\operatorname{diag}(\eta_n^{1,l},\dots,\eta_n^{N,l})+\sum_{i=1}^N(\gamma+\eta_n^{i,l})C_n^i\right]\bold x_n^l,\\
&=\left[ \underbrace{(1-\gamma)I_N+ \gamma  W_n}_{=: A_n} \underbrace{-H_n^l(I_N-W_n)}_{=:B_n}\right] {\mathfrak x}_n^l = (A_n + B_n) {\mathfrak x}_n^l,\quad n \geq 0, \quad  1\leq l\leq d.
\end{aligned}
\end{align}
%Note that $A_n$ is a row-stochastic matrix. In fact, t
The randomness of $A_n$ is due to the random switching of network topology via random batch interactions. In contrast, $B_n$ takes care of the external noises $H_n^l$. 
In the absence of external noise ($\zeta=0$), one has $H_n^l = 0$ for all $n \geq 0$ and system \eqref{C-2} reduces to the stochastic system:
\begin{equation}\label{C-4}
{\mathfrak x}_{n+1}^l = A_n {\mathfrak x}_{n}^l, \quad n \geq 0.
\end{equation}
In the next lemma, we study several properties of $A_n$.
\begin{lemma} \label{L3.1}
Let $A_n$ be a matrix defined in \eqref{C-2}. Then, it satisfies the following two properties:
\begin{enumerate}
\item
$A_n$ is {\it row-stochastic}:
\[    [A_n]_{ij} \geq 0, \quad \sum_{j=1}^{N} [A_n]_{ij} = 1, \quad \forall~i \in {\mathcal N}, \]
where $[A_n]_{ij}$ is the $(i,j)$-th element of the matrix $A_n \in \bbr^{N \times N}$. 

\vspace{0.2cm}

\item
The product $A_{n+ m} \cdots A_n$ is row-stochastic:
\[  [A_{n+ m} \cdots A_n]_{ij} \geq 0, \quad \sum_{j=1}^{d}  [A_{n+ m} \cdots A_n]_{ij} = 1. \]
\end{enumerate}
\end{lemma}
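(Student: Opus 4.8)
The plan is to first verify that the interaction matrix $W_n$ is row-stochastic directly from the defining properties of the weight functions, then deduce the claim for $A_n$ as a convex combination, and finally obtain the statement for products from the closure of row-stochastic matrices under multiplication.

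For part (1), recall from \eqref{C-1-1} that $[W_n]_{ij} = \omega_{[i]_n,j}(X_n)$, where $[i]_n$ is the batch containing the index $i$. The defining relations for the weights give $\omega_{S,j}\geq 0$ and $\omega_{S,j}=0$ whenever $j\notin S$, so every entry of $W_n$ is nonnegative; moreover, since $i\in[i]_n$, the $i$-th row sum equals $\sum_{j=1}^N \omega_{[i]_n,j}(X_n) = \sum_{j\in[i]_n}\omega_{[i]_n,j}(X_n) = 1$. Hence $W_n$ is row-stochastic. Writing $A_n = (1-\gamma)I_N + \gamma W_n$ with $\gamma\in(0,1)$, I would note that $A_n$ is a convex combination of the two row-stochastic matrices $I_N$ and $W_n$: the off-diagonal entries $\gamma[W_n]_{ij}$ and the diagonal entries $(1-\gamma)+\gamma[W_n]_{ii}$ are all nonnegative, and each row sum equals $(1-\gamma)\cdot 1 + \gamma\cdot 1 = 1$. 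This proves part (1).

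For part (2), I would use the characterization that an entrywise-nonnegative matrix $M$ is row-stochastic if and only if $M\mathbf{1}=\mathbf{1}$, where $\mathbf{1}=(1,\dots,1)^\top\in\bbr^N$. Since each $A_k$ is row-stochastic by part (1), and a product of entrywise-nonnegative matrices is again entrywise-nonnegative, the product $A_{n+m}\cdots A_n$ has nonnegative entries. Applying the factors successively to $\mathbf{1}$ gives $(A_{n+m}\cdots A_n)\mathbf{1} = A_{n+m}\cdots A_{n+1}(A_n\mathbf{1}) = A_{n+m}\cdots A_{n+1}\mathbf{1}=\cdots=\mathbf{1}$, so every row of the product sums to one. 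An equivalent route is a direct induction on $m$ using the identity $\sum_j [AB]_{ij} = \sum_k [A]_{ik}\sum_j [B]_{kj}$. I would also remark that the index $d$ appearing in the displayed row-sum of the statement should read $N$.

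This lemma is elementary, so there is no genuine obstacle; the only point requiring a moment's care is confirming that the $i$-th row of $W_n$ is governed by the weight $\omega_{[i]_n,\cdot}$ attached to the batch of $i$, together with the fact that $i$ always belongs to its own batch $[i]_n$, which is precisely what forces the row sum to be exactly one rather than possibly smaller.
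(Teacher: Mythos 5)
Your proof is correct and follows essentially the same route as the paper: checking entrywise nonnegativity and unit row sums of $A_n=(1-\gamma)I_N+\gamma W_n$ directly from the defining properties of the weights, and then invoking closure of row-stochastic matrices under multiplication (which you spell out via $M\mathbf{1}=\mathbf{1}$, where the paper simply cites the closure fact). Your remark that the index $d$ in the displayed row sum of part (2) should read $N$ is also correct; it is a typo in the statement.
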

\begin{proof} 
\noindent (1)~By \eqref{C-1-1} and \eqref{C-2}, one has 
\[ 
[A_n]_{ij} = [ (1-\gamma)I_N+ \gamma  W_n ]_{ij} = (1-\gamma) \delta_{ij} + \gamma \omega_{[i]_n,j}(X_n)  \geq 0,
\]
where we used the fact $\gamma \in (0, 1)$. This also yields
\[ \sum_{j} [A_n]_{ij}  = \sum_{j}  (1-\gamma) \delta_{ij} + \gamma \omega_{[i]_n,j}(X_n) = (1-\gamma)  \sum_{j}  \delta_{ij} + \gamma \sum_{j} \omega_{[i]_n,j}(X_n) = 1. \]
\vspace{0.2cm}

\noindent (2)~The second assertion holds since the product of two row-stochastic matrices is also row-stochastic. 
%Let $M_1$ and $M_2$ be two row-stochastic matrices:
%\[ [M_1]_{ij} \geq 0, \quad [M_2]_{ij} \geq 0, \quad \sum_{j} [M_1]_{ij} = 1, \quad  \sum_{j} [M_2]_{ij} = 1. \]
%Then, we use the above relations to get 
%\begin{align*}
%\begin{aligned}
%&  [M_1 M_2]_{ij} = \sum_{k} [M_1]_{ik} [M_2]_{kj} \geq 0, \\
%&  \sum_{j}  [M_1 M_2]_{ij} = \sum_{j}  \sum_{k} [M_1]_{ik} [M_2]_{kj}  =   \sum_{k}   [M_1]_{ik}  \Big( \sum_{j} [M_2]_{kj}  \Big) = \sum_{k}   [M_1]_{ik}  = 1.
%\end{aligned}
%\end{align*}

\end{proof}

 Note that the asymptotic dynamics of ${\bold x}_n$ is completely determined by the matrix sequence $\{A_n\}_{n\geq0}$ in \eqref{C-4}. 
\subsection{Ergodicity coefficient} \label{sec:3.2}
In this subsection, we recall the concept of ergodicity coefficient and investigate its basic properties which will be crucially used in later sections. \newline

First, we recall the ergodicity coefficient of a matrix $A \in \bbr^{N \times N}$ as follows.
\begin{definition}  \label{D3.1}
\emph{\cite{A-G}}
For $A=(a_{ij}) \in \bbr^{N \times N}$, we define the ergodicity coefficient of $A$ as 
\begin{equation} \label{C-4-1}
\alpha(A):=\min_{i,j}\sum_{k=1}^N\min\{a_{ik},a_{jk}\}.
\end{equation}
\end{definition}
\begin{remark}
In \cite{C-S}, the ergodicity coefficient \eqref{C-4-1} is introduced to study the emergent dynamics of systems of the form \eqref{C-4}.
\end{remark}
As a direct application of \eqref{C-4-1}, one has the super-additivity and monotonicity for $\alpha$.
\begin{lemma} \label{L3.2}
Let $A, B$ be square matrices in $\bbr^{N \times N}$. Then, the ergodicity coefficient has the following assertions:
\begin{enumerate}
\item
(Super-additivity and homogeneity):
\[ \alpha(A+B)\geq\alpha(A)+\alpha(B) \quad\text{and}\quad \alpha(tA)=t\alpha(A),\quad t\geq0. \]
\item
(Monotonicity):
\[ A\geq B~\mbox{entry-wise} \quad \Longrightarrow \quad \alpha(A)\geq\alpha(B). \]
\end{enumerate}
\end{lemma}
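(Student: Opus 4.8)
The plan is to reduce both assertions to elementary pointwise inequalities for the scalar $\min$ function, then sum over the column index $k$ and minimize over the pair $(i,j)$, reading everything off the definition \eqref{C-4-1}.

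For homogeneity, I would observe that for $t\geq0$ the map $x\mapsto tx$ is order-preserving, so $\min\{ta_{ik},ta_{jk}\}=t\min\{a_{ik},a_{jk}\}$ holds for every triple $(i,j,k)$. Summing over $k$ and then minimizing over $(i,j)$ gives $\alpha(tA)=t\alpha(A)$ immediately. For super-additivity, the crux is the scalar estimate
\[
\min\{a+b,\,c+d\}\geq\min\{a,c\}+\min\{b,d\},
\]
which holds because the right-hand side is bounded above by both $a+b$ and by $c+d$. Applying it with $a=a_{ik}$, $b=b_{ik}$, $c=a_{jk}$, $d=b_{jk}$ yields, for each fixed pair $(i,j)$,
\[
\sum_{k=1}^{N}\min\{a_{ik}+b_{ik},\,a_{jk}+b_{jk}\}
\geq\sum_{k=1}^{N}\min\{a_{ik},a_{jk}\}+\sum_{k=1}^{N}\min\{b_{ik},b_{jk}\}
\geq\alpha(A)+\alpha(B),
\]
where the last inequality uses that each of the two partial sums dominates its own minimum over all index pairs. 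Since the resulting lower bound is independent of $(i,j)$, minimizing the left-hand side over $(i,j)$ produces $\alpha(A+B)\geq\alpha(A)+\alpha(B)$.

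For monotonicity, I would note that if $a_{ik}\geq b_{ik}$ entry-wise, then $\min\{a_{ik},a_{jk}\}\geq\min\{b_{ik},b_{jk}\}$ for every $(i,j,k)$, again by order-preservation of $\min$; summing over $k$ and minimizing over $(i,j)$ then gives $\alpha(A)\geq\alpha(B)$.

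I do not expect a genuine obstacle here, since all three claims follow directly once the scalar inequality $\min\{a+b,\,c+d\}\geq\min\{a,c\}+\min\{b,d\}$ is isolated. The only point deserving care is the order of quantifiers in the super-additivity step: one must keep the pair $(i,j)$ \emph{fixed} while bounding the two partial sums below by $\alpha(A)$ and $\alpha(B)$, and only afterward take the outer minimum over $(i,j)$ on the left, rather than minimizing the two column sums independently (which would be too weak).
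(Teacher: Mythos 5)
Your proof is correct and matches the paper's intent exactly: the paper states that all three assertions ``follow from \eqref{C-4-1} directly'' and omits the details, and your argument is precisely that direct verification, with the scalar inequality $\min\{a+b,\,c+d\}\geq\min\{a,c\}+\min\{b,d\}$ correctly isolated as the only nontrivial step. Your remark about keeping the pair $(i,j)$ fixed before taking the outer minimum is also the right point of care in the super-additivity argument.
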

\begin{proof} 
All the estimates can be followed from \eqref{C-4-1} directly. Hence, we omit its details. 
\end{proof}
Next, we recall a key tool that will be crucially used in the consensus estimates for   \eqref{C-2}.
\begin{lemma}\label{L3.3}\cite{A-G}
Let $A \in \bbr^{N \times N}$ be a square matrix with equal row sums:
\[ \sum_{j=1}^{d} a_{ij} = a, \quad \forall~i \in {\mathcal N}. \]
Then, we have
\[
\mathcal D(A\bold z)\leq (a-\alpha(A))\mathcal D(\bold z),\qquad\forall~\bold z\in \bbr^N.
\]
\end{lemma}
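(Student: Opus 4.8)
The plan is to reduce the diameter estimate to a pairwise bound and then exploit the splitting that underlies the definition of the ergodicity coefficient. Since $\mathcal{D}(A\bold z)=\max_{i,j}\big((A\bold z)_i-(A\bold z)_j\big)$ by \eqref{A-8}, I would fix an arbitrary ordered pair $(i,j)$ and write $(A\bold z)_i-(A\bold z)_j=\sum_{k=1}^N (a_{ik}-a_{jk})z^k$. The goal is to show this quantity is at most $(a-\alpha(A))\mathcal{D}(\bold z)$ uniformly in $(i,j)$, after which taking the maximum over $(i,j)$ finishes the proof.

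The heart of the argument is the elementary decomposition $a_{ik}-a_{jk}=(a_{ik}-m_k)-(a_{jk}-m_k)$ with $m_k:=\min\{a_{ik},a_{jk}\}$. By construction both $a_{ik}-m_k\geq 0$ and $a_{jk}-m_k\geq 0$, and the equal-row-sum hypothesis $\sum_j a_{ij}=a$ gives $\sum_k(a_{ik}-m_k)=\sum_k(a_{jk}-m_k)=a-\sum_k m_k=:a-s$. Thus the two residual coefficient vectors are nonnegative and share the same total mass $a-s$. I would then bound $\sum_k(a_{ik}-m_k)z^k\leq (a-s)\max_l z^l$ and $-\sum_k(a_{jk}-m_k)z^k\leq -(a-s)\min_l z^l$, which combine to give $(A\bold z)_i-(A\bold z)_j\leq (a-s)\big(\max_l z^l-\min_l z^l\big)=(a-s)\mathcal{D}(\bold z)$.

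Finally, since $s=\sum_k\min\{a_{ik},a_{jk}\}\geq\alpha(A)$ by \eqref{C-4-1} and $\mathcal{D}(\bold z)\geq 0$, I conclude $(A\bold z)_i-(A\bold z)_j\leq(a-\alpha(A))\mathcal{D}(\bold z)$, and maximizing over $i,j$ yields the claim. The step I expect to be the crux is the mass-matching identity $\sum_k(a_{ik}-m_k)=\sum_k(a_{jk}-m_k)$: it is precisely the equal row sums that force both residual vectors to have total mass $a-s$, so that the ``upper'' part is controlled by $\max_l z^l$ and the ``lower'' part by $\min_l z^l$ with the \emph{same} coefficient, collapsing the two extremes into a single factor $\mathcal{D}(\bold z)$; without equal row sums the two masses differ and the bound does not close. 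The remaining sign bookkeeping, namely using $\mathcal{D}(\bold z)\geq 0$ to pass from $a-s$ to $a-\alpha(A)$, is routine.
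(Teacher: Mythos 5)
Your proof is correct and follows essentially the same route as the paper's: the identical decomposition $a_{ik}-a_{jk}=(a_{ik}-m_k)-(a_{jk}-m_k)$ with $m_k=\min\{a_{ik},a_{jk}\}$, the mass-matching consequence of equal row sums, the bounds via $\max_l z^l$ and $\min_l z^l$, and finally $\sum_k m_k\geq\alpha(A)$ together with $\mathcal D(\bold z)\geq0$. Your explicit remark that $\mathcal D(\bold z)\geq0$ is needed to pass from $a-s$ to $a-\alpha(A)$ is a point the paper leaves implicit in its final maximization step, but the arguments are otherwise the same.
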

\begin{proof} A proof using induction on $N$ can be found in \cite{A-G}, but here we present a different approach. By definition of state diameter \eqref{A-8}, one has
\begin{align*}
\begin{aligned}
{\mathcal D}(A\bold z) &= \max_{i,j} \Big( \sum_{k} a_{ik} z_k -  \sum_{k} a_{jk} z_k ) \\
&= \max_{i,j} \Big(   \sum_{k} ( a_{ik} - \min\{ a_{ik}, a_{jk}\}) z_k - \sum_{k} (a_{jk} - \min\{a_{ik}, a_{jk} \}) z_k  \Big) \\
&\leq \max_{i,j} \Big(  \Big(a - \sum_{k} \min \{ a_{ik}, a_{jk} \}\Big) \max_{k} z_k - \Big(a -  \sum_{k} \min\{a_{ik}, a_{jk} \} \Big) \min_{k} z_k  \Big) \\
&= \max_{i,j} \Big( a - \sum_{k} \min \{ a_{ik}, a_{jk} \} \Big) \Big( \max_{k} z_k - \min_{k} z_k \Big) \\
&= \Big( a - \min_{i,j}  \sum_{k} \min \{ a_{ik}, a_{jk} \} \Big) {\mathcal D}({\bold z}) \\
& = (a-\alpha(A))\mathcal D(\bold z).
\end{aligned}
\end{align*}
\end{proof}
Next, we study elementary lemmas for the emergence of consensus. From Definition \ref{D1.1}, for consensus, it suffices to show
\[  \lim_{n \to \infty} {\mathcal D}(\mathfrak x_{n}^l) = 0, \quad \forall~l. \]
We apply Lemma \ref{L3.3} for \eqref{C-4} and row-stochasticity of $A_n$ to get 
\begin{equation*}\label{C-5}
\begin{aligned}
{\mathcal D}(\mathfrak x_{n+1}^l) \leq (1-\alpha(A_n)){\mathcal D}(\mathfrak x_{n}^l).
\end{aligned}
\end{equation*}
However, note that the ergodicity coefficient $\alpha(A_n)$ may not be strictly positive. For $m \geq 1$, we iterate \eqref{C-4} $m$-times to get
\[
{\mathfrak x}_{n+m}^l = A_{n + m-1} \cdots  A_n {\mathfrak x}_{n}^l, \quad n \geq 0.
\]
Since the product $ A_{n + m-1} \cdots  A_n$ is also row-stochastic (see Lemma \ref{L3.1}), we can apply Lemma \ref{L3.3} to find
\begin{equation*}\label{C-6}
{\mathcal D}(\mathfrak x_{n+m}^l) \leq (1-\alpha(A_{n+m-1}A_{n+m-2}\cdots A_n)){\mathcal D}(\mathfrak x_{n}^l).
\end{equation*}
Then, the ergodicity coefficient $\alpha(A_{n+m-1}A_{n+m-2}\cdots A_n)$ depends on the network structure between $W_{n+m-1}, W_{n+m-2},\ldots, W_{n}$, and the probability of $\alpha$ being strictly positive increases, as $m$ grows.  \newline

In \cite{C-S}, the consensus $\mathcal D(\mathfrak x_n^l)\to0$ $(n\to\infty)$ was guaranteed under the assumption:
\begin{equation}\label{C-7}
\begin{aligned}
\sum_{k=0}^\infty\alpha(A_{t_{k+1}-1}A_{t_{k+1}-2}\dots A_{t_{k} })=\infty\quad\mbox{for some}\quad 0=t_0<t_1<\dots
\end{aligned}
\end{equation}
However, examples of the network topology (which may vary in time, possibly randomly) that satisfies the aforementioned a priori assumption \eqref{C-7} was not well-studied yet. Below, we show that random batch interactions provide a network topology satisfying a priori condition \eqref{C-7}. 

First, we introduce a random variable measuring the degree of network connectivity. For a given pair $(i,j) \in {\mathcal N} \times {\mathcal N}$, we consider the set of all time instant $r$ within the time zone $[n, n+ m)$ such that $i$ and $j$ are in the same batch $([i]_r = [j]_r)$. More precisely, we introduce 
\begin{equation} \label{C-7-1}
{\mathcal T}^{ij}_{[n, n+ m)} := \{ n \leq  r < n + m~:~[i]_r = [j]_r \}, \qquad {\mathcal G}_{[n, n+ m)} := \min_{i,j} \big|{\mathcal T}^{ij}_{[n, n+ m)}\big|. 
\end{equation}
Note that ${\mathcal T}^{ij}_{[n, n+ m)}$ is non-empty if one batch contains both $i$ and $j$ for some instant in $\{n,n+1,\dots,n+ m -1\}$. In the following lemma, we will estimate ergodicity coefficients from random batch interactions.

\begin{lemma}\label{L3.4}
Let $A_n$ be the transition matrix in \eqref{C-2}. Then, for given integers $m\geq0$, $n \geq 0$ and $l=1,\dots,d$, 
\begin{equation} \label{C-8}
\alpha(A_{n+m-1}A_{n+m-2}\cdots A_n) \geq \gamma(1-\gamma)^{m-1} {\mathcal G}_{[n, n+m)}.
\end{equation}
\end{lemma}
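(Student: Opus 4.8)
The plan is to establish, for every pair $(i,j)\in{\mathcal N}\times{\mathcal N}$, the per-pair overlap estimate
\[ \sum_{k=1}^N \min\{[P]_{ik},[P]_{jk}\}\ \geq\ \gamma(1-\gamma)^{m-1}\,\big|{\mathcal T}^{ij}_{[n,n+m)}\big|,\qquad P:=A_{n+m-1}\cdots A_n, \]
and then minimize over $(i,j)$. By Definition \ref{D3.1} and \eqref{C-7-1} this gives \eqref{C-8} immediately, since a pointwise inequality $f(i,j)\geq g(i,j)$ passes to the minima, $\min_{i,j}f\geq\min_{i,j}g$.

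First I would record the one structural fact that drives everything. Since $A_n=(1-\gamma)I_N+\gamma W_n$ and the weight $\omega_{[i]_n,k}(X_n)$ depends on $i$ only through its batch $[i]_n$, for any time $r$ with $[i]_r=[j]_r$ (that is, $r\in{\mathcal T}^{ij}_{[n,n+m)}$) the $i$-th and $j$-th rows of $W_r$ are identical. Thus $\gamma W_r$ supplies a common nonnegative row vector, of total mass $\gamma$, shared by rows $i$ and $j$.

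The key step is to make these per-time contributions \emph{additive}. I would expand the product $P=\prod_{s=n}^{n+m-1}\big((1-\gamma)I_N+\gamma W_s\big)$ into its $2^m$ terms indexed by subsets $S\subseteq\{n,\dots,n+m-1\}$, the term for $S$ being $(1-\gamma)^{m-|S|}\gamma^{|S|}$ times an ordered product of the matrices $W_s$, $s\in S$. Every term is entrywise nonnegative, so $P$ dominates, entrywise, the sum of the singleton terms corresponding to $r\in{\mathcal T}^{ij}_{[n,n+m)}$:
\[ P\ \geq\ \gamma(1-\gamma)^{m-1}\sum_{r\in{\mathcal T}^{ij}_{[n,n+m)}} W_r\qquad\text{entrywise}. \]
Using the row coincidence from the previous paragraph, the right-hand side furnishes, for both rows $i$ and $j$, the common sub-distribution $v_k:=\gamma(1-\gamma)^{m-1}\sum_{r\in{\mathcal T}^{ij}_{[n,n+m)}}[W_r]_{ik}$, so that $v_k\leq[P]_{ik}$ and $v_k\leq[P]_{jk}$, whence $\sum_k\min\{[P]_{ik},[P]_{jk}\}\geq\sum_k v_k$. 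Summing in $k$ and using that each $W_r$ is row-stochastic gives $\sum_k v_k=\gamma(1-\gamma)^{m-1}\big|{\mathcal T}^{ij}_{[n,n+m)}\big|$, which is exactly the per-pair bound.

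The main obstacle is precisely this additivity over the distinct meeting times in ${\mathcal T}^{ij}_{[n,n+m)}$: extracting the shared mass at a single time $r$ only yields one factor $\gamma(1-\gamma)^{m-1}$, and naively the shared masses produced at different times may land on the same coordinates $k$ and so cannot simply be summed. The expansion into $2^m$ nonnegative terms circumvents this, because the singleton terms for distinct $r$ are genuinely distinct summands of $P$; discarding the empty term and all terms with $|S|\geq2$ (all nonnegative) leaves exactly the additive lower bound above. I would also note that the exponent $(1-\gamma)^{m-1}$ is uniform in $r$ because each singleton term carries the same number $m-1$ of factors $(1-\gamma)I_N$, and that the degenerate case $m=0$ (or any pair with ${\mathcal T}^{ij}_{[n,n+m)}=\varnothing$) is trivial since both sides vanish.
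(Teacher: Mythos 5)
Your proof is correct and takes essentially the same route as the paper: the paper likewise expands the product into $2^m$ entrywise-nonnegative terms, retains the singleton terms $\gamma(1-\gamma)^{m-1}W_r$ (via the super-additivity and homogeneity of $\alpha$ in Lemma \ref{L3.2}, see \eqref{C-9}), and then in \eqref{C-9-1} uses exactly your observation that rows $i$ and $j$ of $W_r$ coincide at meeting times, together with row-stochasticity, so each $r\in{\mathcal T}^{ij}_{[n,n+m)}$ contributes unit mass. Your only deviation---running the overlap estimate entrywise per pair $(i,j)$ rather than through the abstract properties of the ergodicity coefficient applied to $\sum_{r=n}^{n+m-1}W_r$---is presentational, not substantive.
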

\begin{proof} We derive the estimate \eqref{C-8} via two steps:
\begin{equation} \label{C-8-1}
\alpha(A_{n+m-1}A_{n+m-2}\cdots A_n)  \geq    \gamma(1-\gamma)^{m-1}\alpha\Bigg( \sum_{r={n}}^{n+m-1} W_r \Bigg) \geq  \gamma(1-\gamma)^{m-1}  {\mathcal G}_{[n, n+m)}.
\end{equation}

\noindent $\bullet$~Step A (Derivation of the first inequality):~Recall that 
\[ A_n =  (1-\gamma)I_N+ \gamma W_n. \]
Then, it follows from Lemma \ref{L3.2} that 
\begin{align}\label{C-9}
\begin{aligned}
&\alpha(A_{n+m-1}A_{n+m-2}\cdots A_n) \\
& \hspace{0.5cm} = \alpha\Big(\left[(1-\gamma)I_N+ \gamma  W_{n+m-1} \right]\dots\left[(1-\gamma)I_N+ \gamma  W_{n} \right] \Big)\\
&  \hspace{0.5cm} = \alpha\Bigg((1-\gamma)^{m} I_N + \gamma(1-\gamma)^{m-1}\sum_{r={n}}^{n+m-1} W_r + \cdots + \gamma^{m} W_{n+m-1}\cdots W_n\Bigg)  \\
& \hspace{0.5cm}  \geq   \alpha\Bigg( \gamma(1-\gamma)^{m-1}\sum_{r={n}}^{n+m-1} W_r \Bigg)  =  \gamma(1-\gamma)^{m-1}\alpha\Bigg( \sum_{r={n}}^{n+m-1} W_r \Bigg).
\end{aligned}
\end{align}

\vspace{0.2cm}

\noindent $\bullet$~Step B (Derivation of the second inequality):~We use $W_r := (\omega_{[i]_r,j}(X_r))$ and  Definition \ref{D3.1} to see  
\begin{align}
\begin{aligned} \label{C-9-1}
\alpha\Bigg( \sum_{r={n}}^{n+m-1} W_r \Bigg) &=\min_{i,j} \sum_{k=1}^N\min\Bigg\{\sum_{r=n}^{n+m-1} \omega_{[i]_r,k}(X_r),\sum_{r=n}^{n+m-1} \omega_{[j]_r,k}(X_r)\Bigg\}  \\
&\geq\min_{i,j} \sum_{k=1}^N\min\Bigg\{\sum_{\substack{n\leq r < n+m:\\ [i]_r=[j]_r}}  \omega_{[i]_r,k}(X_r),  \sum_{\substack{n\leq r < n+m:\\ [i]_r=[j]_r}} \omega_{[j]_r,k}(X_r)\Bigg\}   \\
&=\min_{i,j} \sum_{k=1}^N \sum_{\substack{n\leq r < n+m:\\ [i]_r=[j]_r}}  \omega_{[i]_r,k}(X_r)
=\min_{i,j} \sum_{\substack{n\leq r < n+m:\\ [i]_r=[j]_r}}  1  \\
&=\min_{i,j}  \left|\{n\leq r < n+m: [i]_r=[j]_r\}\right|.
\end{aligned}
\end{align}
Finally we combine \eqref{C-9} and \eqref{C-9-1} to derive \eqref{C-8-1}. 
\end{proof}
\begin{remark}\label{R3.3}
Below, we provide two comments on the result of Lemma \ref{L3.4}. 
\begin{enumerate}
\item
Lemma \ref{L3.3} says that, if the union of network structure from time $r=n$ to time $r=n+m-1$ forms all-to-all connected network, then the ergodicity coefficient $\alpha(A_{n+m-1}A_{n+m-2}\cdots A_n)$ is positive.
To make the ergodicity coefficient positive, the union network needs not be necessarily all-to-all. However, the estimation on the lower bound of $\alpha$ could be more difficult.
\vspace{0.2cm}
\item
The diameter $D({\mathfrak x}^l_n)$ satisfies 
\begin{equation*} \label{C-9-2}
{\mathcal D}(\mathfrak x_{n+m}^l) \leq \Big(1-\gamma(1-\gamma)^{m-1} {\mathcal G}_{[n, n+m)} \Big) {\mathcal D}(\mathfrak x_{n}^l),\quad n,m\geq0
\end{equation*}
\end{enumerate}
\end{remark}

\subsection{Almost sure consensus} \label{sec:3.3}
In this subsection, we provide a stochastic consensus result when the stochastic noises are turned off, i.e., $\zeta = 0$. For this, we choose a suitable $m$ such that ${\mathcal G}_{[0, m)}$ has a strictly positive expectation. 
\begin{lemma}\label{L3.5}
The following assertions hold.
\begin{enumerate}
\item
There exists $m_0\in\mathbb N$ such that there exist partitions ${\mathcal P}^1,\dots, {\mathcal P}^{m_0}$ of ${\mathcal N}$ in $\mathcal A$, such that for any $i,j \in {\mathcal N}$, one has $i,j\in S \in {\mathcal P}^k$ for some $1\leq k\leq m_{0}$ and $S\in {\mathcal P}^k$.

\vspace{0.2cm}

\item
For  $n \geq 0$, 
\begin{equation}\label{C-10}
 {\mathbb E}\left[{\mathcal G}_{[n,n+m_0)} \right] =  {\mathbb E}\left[{\mathcal G}_{[0, m_0)} \right] =: p_{m_0} > 0.
\end{equation}
\end{enumerate}
\end{lemma}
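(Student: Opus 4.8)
The plan is to prove (1) by a finite combinatorial covering argument, and then to leverage (1) together with the i.i.d.\ structure of the random batches to establish (2). The key structural observation, used throughout, is that the quantity $\mathcal{G}_{[n,n+m)}$ in \eqref{C-7-1} depends only on the batch assignments $(\mathcal{B}^n,\dots,\mathcal{B}^{n+m-1})$ through the membership indicators $\mathbf{1}_{\{[i]_r=[j]_r\}}$, and is independent of the particle states $X_r$ and of the noises. This decouples the combinatorics from the CBO dynamics.

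For part (1), I would first note that since $P>1$, the first block of any partition in $\mathcal{A}$ has size $P\geq2$. Hence, for an arbitrary pair $i\neq j$, one can construct a partition in $\mathcal{A}$ placing $i$ and $j$ in a common block: put $i,j$ together in one block of size $P$, fill it with $P-2$ further indices, and partition the remaining indices into blocks obeying the size constraints defining $\mathcal{A}$. This shows that the set of pairs co-located by at least one partition of $\mathcal{A}$ is the full pair set $\{(i,j):i,j\in\mathcal{N}\}$. Since $\mathcal{N}$ is finite, $\mathcal{A}$ is finite; enumerating $\mathcal{A}=\{\mathcal{P}^1,\dots,\mathcal{P}^{m_0}\}$ with $m_0:=|\mathcal{A}|$ furnishes a finite list whose co-location relations exhaust all pairs, which is exactly the assertion. (The value of $m_0$ could be optimized, but only existence is needed.)

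For part (2), the equality $\mathbb{E}[\mathcal{G}_{[n,n+m_0)}]=\mathbb{E}[\mathcal{G}_{[0,m_0)}]$ is immediate from stationarity: by the observation above $\mathcal{G}_{[n,n+m_0)}$ is a fixed function of $(\mathcal{B}^n,\dots,\mathcal{B}^{n+m_0-1})$, and since the $\mathcal{B}^r$ are i.i.d.\ and uniform on $\mathcal{A}$, the joint law of this tuple does not depend on $n$. To obtain $p_{m_0}>0$, I would reuse the partitions $\mathcal{P}^1,\dots,\mathcal{P}^{m_0}$ from (1) to build a favorable event $E:=\{\mathcal{B}^0=\mathcal{P}^1,\dots,\mathcal{B}^{m_0-1}=\mathcal{P}^{m_0}\}$. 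On $E$, part (1) guarantees that every pair $(i,j)$ lies in a common batch at some time $r\in\{0,\dots,m_0-1\}$, so $|\mathcal{T}^{ij}_{[0,m_0)}|\geq1$ for all $i,j$ and thus $\mathcal{G}_{[0,m_0)}\geq1$. By independence and uniformity, $\mathbb{P}(E)=|\mathcal{A}|^{-m_0}>0$, and since $\mathcal{G}_{[0,m_0)}$ is a nonnegative integer random variable, $p_{m_0}=\mathbb{E}[\mathcal{G}_{[0,m_0)}]\geq\mathbb{P}(E)>0$.

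The argument is elementary; the only point demanding care is the explicit construction in (1) realizing a prescribed pair within a single block while respecting the batch-size rules of $\mathcal{A}$, and its subsequent reuse in (2) as the support of the positive-probability event $E$.
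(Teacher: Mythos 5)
Your proposal is correct and follows essentially the same route as the paper: enumerate the finite set $\mathcal{A}$ to get $m_0=|\mathcal{A}|$ in (1), use the i.i.d.\ batch choices for the stationarity equality in (2), and lower-bound $p_{m_0}$ by $\mathbb{P}(\mathcal{G}_{[0,m_0)}\geq1)\geq\mathbb{P}(\mathcal{B}^{j-1}=\mathcal{P}^j,\ 1\leq j\leq m_0)=|\mathcal{A}|^{-m_0}$, which is exactly the paper's chain of inequalities. Your only deviation is a welcome one: you explicitly verify, via the $P\geq2$ block construction, that every pair $(i,j)$ is co-located by some partition in $\mathcal{A}$, a point the paper dismisses as ``clear.''
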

\begin{proof}
(1)~First, the existence of $m_0$ and partitions are clear since we may consider  all possible partitions ${\mathcal P}^1,\ldots,{\mathcal P}^{m_1}$ of $\{1,\dots,N\}$ in $\mathcal A$. In other words, we may choose $m_0$ as $m_1=|\mathcal A|$.%such a number that satisfying $i,j \in S \subset {\mathcal P}^k$ for some $1 \leq k \leq m_1$ for any $i,j \in {\mathcal N}$.
\newline

\noindent (2)~ Note that the random choices of batches ${\mathcal B}^n$ $(n\geq0)$ are independent and identically distributed. Hence, the expectations of ${\mathcal G}_{[0,m_0)}$ and ${\mathcal G}_{[n,n+m_0)}$ should be identical:
\[ {\mathbb E}\left[{\mathcal G}_{[n,n+m_0)} \right] =  {\mathbb E}\left[{\mathcal G}_{[0,m_0)} \right] =: p_{m_0}. \]
Next, let ${\mathcal P}^1,\dots,{\mathcal P}^{m_0}$ be the partitions chosen in (1). %On one hand, the monotonicity $p_m \geq p_{m_0}$ is clear since ${\mathcal G}_{[0,s)} \geq {\mathcal G}_{[0,t)}$ for any $ s \geq t$ by definition of ${\mathcal G}$. On the other hand, n
Note that the probability of having ${\mathcal B}^1 = {\mathcal P}^1$ depends on $m_1=|\mathcal A|$: for any $i,j$,
\[ {\mathbb P}\left\{ {\mathcal B}^i = {\mathcal P}^j \right\} = 1/m_1. \]
Therefore, we may proceed to give a rough estimate on ${\mathcal G}_{[0,m_0)}$:
\begin{equation*}
\begin{aligned}
{\mathbb E}\left[{\mathcal G}_{[0,m_0)} \right] &\geq {\mathbb P}\left\{{\mathcal G}_{[0,m_0)} \geq 1 \right\}\\
&\geq {\mathbb P}\left\{ \text{for each } 1 \leq j \leq m_0,~ {\mathcal B}^i = {\mathcal P}^j \text{ for some } 0 \leq i \leq m-1 \right\}\\
&\geq \prod_{j=1}^{m_0} {\mathbb P}\left\{ {\mathcal B}^{j-1} = {\mathcal P}^j \right\}
= 1/{(m_1)}^{m_0} >0.
\end{aligned}
\end{equation*}
This gives $p_{m_0}>0.$
\end{proof}
%\begin{remark}
%Lemma \ref{L3.5} shows the minimum requirement of the network structure for the consensus of the dynamics. RBM is one of the design of network that guarantees the convergence of the algorithm. For example, as in \cite{C-S}, we may use deterministic (or switching) network system to prove the convergence when the network has a property as in Lemma \ref{L3.5}, which always holds if the network is connected.
%\end{remark}
Now we are ready to provide  a stochastic consensus estimate on the dynamics \eqref{C-2} in the absence of noise, $\zeta = 0$.
\begin{theorem} \label{T3.1}
Let ${\{ \bold x}^i_{n}\}$ be a solution process to \eqref{A-3} with $\eta_n^{i,l} = 0$ a.s. for any $n,i,l$, and let  $k \geq 0$. Then, for $n \in [km_0,(k+1)m_0)$, there exists a nonnegative random variable $\Lambda_0 = \Lambda_0(m_0, k)$ such that 
\begin{align} \label{C-11}
\begin{aligned}
& (i)~{\mathcal D}({\mathfrak x}^l_n) \leq  {\mathcal D}({\mathfrak x}^l_0) \exp(-\Lambda_0 k), \quad \mbox{a.s.},  \\
& (ii)~\max_{i,j} \|{\bold x}^i_n - {\bold x}^j_n \|_{\infty}  \leq \max_{i,j} \|{\bold x}^i_0 - {\bold x}^j_0\|_{\infty} \exp(-\Lambda_0 k), \quad \mbox{a.s.,}
\end{aligned}
\end{align}
where the decay exponent $\Lambda_0(m_0,k)$ satisfies 
\begin{equation} \label{C-11-1}
\lim_{k\to\infty} \Lambda_0(m_0,k) = p_{m_0} \gamma(1-\gamma)^{m_0-1} \quad \mbox{a.s.}
\end{equation}
\end{theorem}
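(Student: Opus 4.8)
The plan is to upgrade the one-step diameter contraction into a \emph{block} contraction over consecutive windows of length $m_0$, iterate it, and then read off the exponential rate from the strong law of large numbers applied to the i.i.d.\ connectivity counts $G_s:={\mathcal G}_{[sm_0,(s+1)m_0)}$. First I would use that, for $\zeta=0$, the flow reduces to \eqref{C-4}, so that $\mathfrak x_{(k+1)m_0}^l = A_{(k+1)m_0-1}\cdots A_{km_0}\,\mathfrak x_{km_0}^l$. Each $A_n$ is row-stochastic with $\alpha(A_n)\ge 0$, so Lemma \ref{L3.3} (with row sum $a=1$) gives $\mathcal D(\mathfrak x_{n+1}^l)\le \mathcal D(\mathfrak x_n^l)$; this monotonicity of the diameter is what lets me pass from $n=km_0$ to an arbitrary $n\in[km_0,(k+1)m_0)$ at the end. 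By Lemma \ref{L3.1}(2) the $m_0$-fold product is again row-stochastic, so combining Lemma \ref{L3.3} with the ergodicity estimate \eqref{C-8} of Lemma \ref{L3.4} yields the block contraction
\[ \mathcal D(\mathfrak x_{(k+1)m_0}^l)\le \bigl(1-\gamma(1-\gamma)^{m_0-1}G_k\bigr)\mathcal D(\mathfrak x_{km_0}^l). \]
Here I would note that $0\le \gamma(1-\gamma)^{m_0-1}G_k\le \alpha(A_{(k+1)m_0-1}\cdots A_{km_0})\le 1$, so each factor lies in $[0,1]$ and the inequality, as well as its iteration, is legitimate.

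Next I would iterate from $s=0$ to $s=k-1$ and majorize the product by the elementary bound $1-x\le e^{-x}$:
\[ \mathcal D(\mathfrak x_{km_0}^l)\le \prod_{s=0}^{k-1}\bigl(1-\gamma(1-\gamma)^{m_0-1}G_s\bigr)\mathcal D(\mathfrak x_0^l)\le \exp\Bigl(-\gamma(1-\gamma)^{m_0-1}\textstyle\sum_{s=0}^{k-1}G_s\Bigr)\mathcal D(\mathfrak x_0^l). \]
This pins down the decay exponent: I set $\Lambda_0(m_0,k):=\gamma(1-\gamma)^{m_0-1}\frac1k\sum_{s=0}^{k-1}G_s\ge0$ (with $\Lambda_0(m_0,0):=0$ handled trivially), so the right-hand side is exactly $\mathcal D(\mathfrak x_0^l)\exp(-\Lambda_0 k)$. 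Feeding in the monotonicity from the first step gives assertion (i) for every $n\in[km_0,(k+1)m_0)$, and assertion (ii) then follows immediately from the identity $\max_{i,j}\|{\bold x}_n^i-{\bold x}_n^j\|_\infty=\max_l \mathcal D(\mathfrak x_n^l)$, since $\Lambda_0$ does not depend on $l$.

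For the limit \eqref{C-11-1} I would observe that $G_s$ is a function of the batch partitions on the window $[sm_0,(s+1)m_0)$ alone; as these windows are pairwise disjoint and the $\{{\mathcal B}^n\}$ are i.i.d., the sequence $\{G_s\}_{s\ge0}$ is i.i.d. The strong law of large numbers then gives $\frac1k\sum_{s=0}^{k-1}G_s\to \mathbb E[G_0]=p_{m_0}$ a.s., with $p_{m_0}$ the strictly positive constant of \eqref{C-10} in Lemma \ref{L3.5}; multiplying by $\gamma(1-\gamma)^{m_0-1}$ yields $\Lambda_0(m_0,k)\to p_{m_0}\gamma(1-\gamma)^{m_0-1}$ a.s.

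The step I expect to be the main obstacle is choosing the exponent correctly rather than any single estimate. The naive route of taking logarithms of the contraction product would, via the strong law, produce $-\mathbb E\bigl[\ln\bigl(1-\gamma(1-\gamma)^{m_0-1}G_0\bigr)\bigr]$, which by Jensen's inequality \emph{strictly} overshoots the target constant; it is precisely the linear majorization $1-x\le e^{-x}$ that makes the empirical mean $\frac1k\sum_s G_s$ surface and hence delivers the stated sharp limit $p_{m_0}\gamma(1-\gamma)^{m_0-1}$. The remaining care is purely bookkeeping: verifying $G_s\le m_0$ so the factors stay in $[0,1]$, confirming the i.i.d.\ structure of $\{G_s\}$, and reducing the componentwise diameter bound to the $\ell^\infty$ statement.
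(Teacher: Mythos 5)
Your proposal is correct and follows essentially the same route as the paper's proof: one-step monotonicity of the diameter via $\alpha(A_n)\geq 0$, the block contraction over windows of length $m_0$ from Lemma \ref{L3.4}, the linear majorization $1-x\leq e^{-x}$ to define $\Lambda_0(m_0,k)$ as $\gamma(1-\gamma)^{m_0-1}$ times the empirical mean of the i.i.d.\ connectivity counts, and the strong law of large numbers for \eqref{C-11-1}. The only differences are cosmetic (index shift $s=0,\dots,k-1$ versus $s=1,\dots,k$) plus your added sanity checks, such as the observation that taking logarithms would overshoot the stated limit by Jensen's inequality, which is sound commentary but not a deviation in method.
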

\begin{proof}
\noindent (i)~We claim
\begin{equation} \label{C-12}
{\mathcal D}({\mathfrak x^l_{n}}) \leq  {\mathcal D}({\mathfrak x^l_{km_0}}) \leq {\mathcal D}({\mathfrak x^l_{0}}) \exp\Big[ - \gamma(1-\gamma)^{m_0-1} \Big(\frac{1}{k}\sum_{s=1}^k {\mathcal G}_{[(s-1)m_0, sm_0)}\Big) k \Big ].
\end{equation}

\noindent $\bullet$~(First inequality in \eqref{C-12}): Since $A_n$ has only nonnegative elements, the values of $\alpha$ at  products of $A_n$'s are nonnegative. Hence, 
\begin{equation} \label{C-13}
{\mathcal D}({\mathfrak x^l_{n}}) \leq {\mathcal D}({\mathfrak x^l_{km_0}}) \left(1-\alpha(A_{n-1}A_{n-2}\cdots A_{km_0})\right) \leq {\mathcal D}({\mathfrak x^l_{km_0}}).
\end{equation}

\vspace{0.2cm}

\noindent $\bullet$~(Second inequality in \eqref{C-12}): By Lemma \ref{L3.4}, the ergodicity coefficient can be written in terms of ${\mathcal G}_{[(k-1)m_0, km_0)}$:
\begin{align}
\begin{aligned} \label{C-14}
{\mathcal D}({\mathfrak x^l_{km_0}}) &\leq {\mathcal D}({\mathfrak x^l_{(k-1)m_0}}) (1-\alpha(A_{km_0-1}A_{km_0-2}\cdots A_{(k-1)m_0}))\\
&\leq {\mathcal D}({\mathfrak x^l_{(k-1)m_0}}) (1-\gamma(1-\gamma)^{m_0-1}{\mathcal G}_{[(k-1)m_0, km_0)})\\
&\leq {\mathcal D}({\mathfrak x^l_{(k-1)m_0}}) \exp(-\gamma(1-\gamma)^{m_0-1}{\mathcal G}_{[(k-1)m_0, km_0)}),
\end{aligned}
\end{align}
where we used $1+x \leq e^{x}$ in the last inequality. By induction on $k$ in \eqref{C-14}, 
\begin{align} 
\begin{aligned} \label{C-15}
{\mathcal D}({\mathfrak x^l_{km_0}})
&\leq {\mathcal D}({\mathfrak x^l_{0}}) \exp\Big[ -\gamma(1-\gamma)^{m_0-1} \sum_{s=1}^k {\mathcal G}_{[(s-1)m_0, sm_0)}\Big ] \\
&= {\mathcal D}({\mathfrak x^l_{0}}) \exp\Big[ -\gamma(1-\gamma)^{m_0-1} \Big(\frac{1}{k}\sum_{s=1}^k {\mathcal G}_{[(s-1)m_0, sm_0)}\Big)k\Big ].
\end{aligned}
\end{align}
We combine \eqref{C-13} and \eqref{C-15} to get the desired estimate \eqref{C-12}.  Finally, set random variable $\Lambda_0$ as 
\[ \Lambda_0 (m_0,k) := \gamma(1-\gamma)^{m_0-1} \Big(\frac{1}{k}\sum_{s=1}^k {\mathcal G}_{[(s-1)m_0, sm_0)}\Big).  \]

Note that the random variables ${\mathcal G}_{[(s-1)m_0, sm_0)}$ $(s\geq1)$ are independent and identically distributed.
Hence, it follows from the strong law of large numbers that  $\Lambda_0(m_0,k)$ converges to the expectation value as $k \to \infty$, which is estimated in Lemma \ref{L3.4}:
\begin{equation*}
\begin{aligned}
\lim_{k \to \infty} \gamma(1-\gamma)^{m_0-1} \Big(\frac{1}{k}\sum_{s=1}^k {\mathcal G}_{[(s-1)m_0, sm_0)}\Big) = \gamma(1-\gamma)^{m_0-1} {\mathbb E}[{\mathcal G}_{[0,m_0)}] =  \gamma(1-\gamma)^{m_0-1} p_{m_0} \quad \mbox{a.s.}
\end{aligned}
\end{equation*}
This gives the asymptotic property of $\Lambda_0(m_0,k)$ in \eqref{C-11-1}. \newline

\noindent (ii)~By \eqref{A-8} and the definition of ${\mathfrak x}_n^l$, one can see that for $l = 1, \cdots, d$,
\[ {\mathcal D}({\mathfrak x}_n^l) \leq \max_{i,j} \| {\bold x}^i_n - {\bold x}^j_n \|_\infty = \max_{1 \leq k \leq d}  {\mathcal D}({\mathfrak x}_n^k).
\]
%\[  {\mathcal D}({\mathfrak x}_n^l) \leq  \max_{i,j} \| {\bold x}^i_n - {\bold x}^j_n \|  \leq \sqrt{N} \max_{1 \leq k \leq d}  {\mathcal D}({\mathfrak x}_n^k) \quad \mbox{a.s.} \]
This and $\eqref{C-11}_1$ yield $\eqref{C-11}_2$:
\begin{align*}
\begin{aligned}
\max_{i,j}\| {\bold x}^i_n- {\bold x}^j_n\|_{\infty} &= \max_{1\leq l \leq d}{\mathcal D}(\mathfrak x_{n}^l) \leq 
\max_{1 \leq l \leq d}  {\mathcal D}({\mathfrak x}^l_0) \exp(-\Lambda_0 k) 
= \max_{i,j} \| {\bold x}^i_0 - {\bold x}^j_0 \|_{\infty}   \exp(-\Lambda_0 k), \quad \mbox{a.s.} 
\end{aligned}
\end{align*}
%\textcolor{red}{Dongnam Ko: what norm we need to eliminate $\sqrt{N}$?}
\end{proof}
It is also worth mentioning again that the transition matrix $A_n$ has only nonnegative elements. In \cite{C-S}, the nonnegative version of Lemma \ref{L3.3} was applied using the result in  \cite{Markov}. If one takes into account of the randomness $H^l_n$, we need to consider Lemma \ref{L3.3} and generalize Lemma \ref{L3.4} for transition matrices with possibly negative entries. We will analyze it in the next section.

\section{Discrete CBO with random batch interactions and heterogeneous external noises: consensus analysis}\label{sec:4}
\setcounter{equation}{0}
In this section, we study consensus estimates for the stochastic dynamics \eqref{C-2} in the presence of {\it both} random batch intereactions {\it and} heterogeneous external noises. In fact, the materials presented in Section \ref{sec:3} corresponds to the special case of this section. Hence, presentation will be parallel to that of Section \ref{sec:3} and we will focus on the effect of external noises on the stochastic consensus. 

\subsection{Ergodicity coefficient} \label{sec:4.1}
As discussed in Section \ref{sec:3}, we use the ergodicity coefficient to prove the emergence of stochastic consensus. Recall the governing stochastic system:
\begin{align}
\begin{aligned} \label{D-1}
& \mathfrak x^l_{n+1} = (A_n+B_n)\mathfrak x^l_{n}, \\
& A_n := (1-\gamma)I_N+ \gamma W_n \quad\text{and}\quad B_n:= H^l_n(I_N+ W_n).
\end{aligned}
\end{align}
As in Lemma \ref{L3.4}, one needs to compute the ergodicity coefficient of 
\begin{equation}\label{D-2}
%\prod_{r=n}^{n+m-1}(A_r+B_r)=
(A_{n+m-1}+B_{n+m-1})(A_{n+m-2}+B_{n+m-2})\cdots(A_{n}+B_{n})
\end{equation}
for integers $n \geq 0$ and $m \geq 1$. \newline

In what follows, we study preliminary lemmas for the estimation of ergodicity coefficient $\alpha(\cdot)$ to handle the stochastic part $B_n$ due to external noises as a perturbation. Then, system \eqref{D-1} can be viewed as a perturbation of the nonlinear system \eqref{C-4} which has been treated in the previous section. 
For a given square matrix $A = (a_{ij})$, define a mixed norm:
\[  \|A\|_{1,\infty} :=\max\limits_{1\leq i\leq N}\sum\limits_{j=1}^N|a_{ij}|. \]

\begin{lemma}\label{L4.1}
For a matrix $A=(a_{ij})_{i,j=1}^N \in \bbr^{N \times N}$, the ergodicity coefficient $\alpha(A)$ has a lower bound:
\[
\alpha(A)\geq-2\|A\|_{1,\infty}.
\] 
\end{lemma}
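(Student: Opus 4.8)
The plan is to work straight from the definition $\alpha(A)=\min_{i,j}\sum_{k=1}^N\min\{a_{ik},a_{jk}\}$ and to bound the inner sum $\sum_{k=1}^N\min\{a_{ik},a_{jk}\}$ from below for each fixed pair $(i,j)$, uniformly in $(i,j)$. The whole estimate rests on one elementary pointwise observation: for any two real numbers $a,b$ one has $\min\{a,b\}\geq-(|a|+|b|)$. Indeed, $\min\{a,b\}$ equals either $a$ or $b$, so its absolute value is at most $\max\{|a|,|b|\}\leq|a|+|b|$, which gives the claimed lower bound. This is the only genuinely nontrivial input, and everything else is bookkeeping; there is no serious obstacle to overcome.

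First I would apply this inequality termwise, with $a=a_{ik}$ and $b=a_{jk}$, and sum over $k=1,\dots,N$. Splitting the resulting right-hand side yields
\[
\sum_{k=1}^N\min\{a_{ik},a_{jk}\}\;\geq\;-\sum_{k=1}^N\big(|a_{ik}|+|a_{jk}|\big)\;=\;-\sum_{k=1}^N|a_{ik}|-\sum_{k=1}^N|a_{jk}|.
\]
Each of the two sums on the right is a single row-sum of absolute values of $A$, hence bounded above by the mixed norm $\|A\|_{1,\infty}=\max_{1\leq i\leq N}\sum_{j=1}^N|a_{ij}|$. Consequently
\[
\sum_{k=1}^N\min\{a_{ik},a_{jk}\}\;\geq\;-2\|A\|_{1,\infty}
\]
for every pair $(i,j)$.

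Finally, since the lower bound $-2\|A\|_{1,\infty}$ does not depend on $i$ or $j$, taking the minimum over all pairs $(i,j)$ preserves it, and I obtain
\[
\alpha(A)=\min_{i,j}\sum_{k=1}^N\min\{a_{ik},a_{jk}\}\;\geq\;-2\|A\|_{1,\infty},
\]
which is the assertion. The point of the lemma is that $\alpha$, though possibly negative for matrices with negative entries (as arise from the noise term $B_n=-H_n^l(I_N-W_n)$ in \eqref{C-2}), is controlled from below by the size of the matrix, so that $B_n$ can later be treated as a perturbation via the super-additivity of $\alpha$ in Lemma \ref{L3.2}.
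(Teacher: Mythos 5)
Your proposal is correct and follows essentially the same route as the paper's proof: both use the termwise bound $\min\{a_{ik},a_{jk}\}\geq -|a_{ik}|-|a_{jk}|$, sum over $k$, bound each row sum of absolute values by $\|A\|_{1,\infty}$, and take the minimum over pairs $(i,j)$. The only cosmetic difference is that the paper routes the pointwise estimate through $\min\{a,b\}\geq\min\{-|a|,-|b|\}$ and keeps the minimum over $(i,j)$ until the last equality, whereas you justify the pointwise inequality directly and pass to the uniform bound first; the content is identical.
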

\begin{proof} By direct calculation, one has
\begin{align*}
\alpha(A)&=\min_{i_1,i_2}\sum_{j=1}^N\min\{a_{i_1j},a_{i_2j}\}\geq\min_{i_1,i_2}\sum_{j=1}^N\min\{-|a_{i_1j}|,-|a_{i_2j}|\}\\
&\geq\min_{i_1,i_2}\sum_{j=1}^N(-|a_{i_1j}|-|a_{i_2j}|)=-2\|A\|_{1,\infty}.
\end{align*}
\end{proof}
%From Lemma \ref{L4.1}, the worst-case analysis can be done for the ergodicity coefficient by the norm $\|\cdot\|_{1,\infty}$.
 The following lemma helps to give a lower bound for the ergodicity coefficient of the product of matrices in \eqref{D-2}.

\begin{lemma}\label{L4.2}
For  $n \in \mathbb N$, let $A_1,\dots,A_n$, $B_1,\dots,B_n$  be matrices in $\bbr^{N \times N}$.  Then, one has
\begin{align}
\begin{aligned} \label{D-3-1}
&\|(A_1+B_1)\cdots(A_n+B_n)-A_1\cdots A_n\|_{1,\infty} \\
&\hspace{2cm} \leq (\|A_1\|_{1,\infty} +\|B_1\|_{1,\infty})\cdots(\|A_n\|_{1,\infty}+\|B_n\|_{1,\infty})-\|A_1\|_{1,\infty} \cdots \|A_n\|_{1,\infty}.
\end{aligned}
\end{align}
\end{lemma}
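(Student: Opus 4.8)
The plan is to prove the inequality by induction on $n$, using submultiplicativity and the triangle inequality for the mixed norm $\|\cdot\|_{1,\infty}$. The key algebraic fact I will rely on throughout is that $\|\cdot\|_{1,\infty}$ is a submultiplicative matrix norm, that is, $\|MN\|_{1,\infty} \leq \|M\|_{1,\infty}\|N\|_{1,\infty}$, together with the obvious triangle inequality $\|M+N\|_{1,\infty} \leq \|M\|_{1,\infty} + \|N\|_{1,\infty}$. The right-hand side of \eqref{D-3-1} is structurally the difference between $\prod_{k}(\|A_k\|_{1,\infty}+\|B_k\|_{1,\infty})$ and $\prod_k \|A_k\|_{1,\infty}$; expanding the first product gives exactly $\prod_k \|A_k\|_{1,\infty}$ plus a sum of all ``mixed'' monomials containing at least one factor $\|B_k\|_{1,\infty}$. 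So the bound one wants to prove says: the norm of the difference of products is at most the sum of norms of all the mixed terms.

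First I would verify the base case $n=1$, which is trivial since the left side is $\|B_1\|_{1,\infty}$ and the right side is $(\|A_1\|_{1,\infty}+\|B_1\|_{1,\infty}) - \|A_1\|_{1,\infty} = \|B_1\|_{1,\infty}$, giving equality. For the inductive step, I would write the telescoping identity
\[
(A_1+B_1)\cdots(A_n+B_n) - A_1\cdots A_n = (A_1+B_1)\bigl[(A_2+B_2)\cdots(A_n+B_n)-A_2\cdots A_n\bigr] + B_1\,A_2\cdots A_n,
\]
which separates the first factor from the rest so that the induction hypothesis applies to the bracketed term. Taking $\|\cdot\|_{1,\infty}$ of both sides and applying the triangle inequality, then submultiplicativity to each product, yields
\[
\|(A_1+B_1)\cdots(A_n+B_n) - A_1\cdots A_n\|_{1,\infty} \leq (\|A_1\|_{1,\infty}+\|B_1\|_{1,\infty})\,R_{n-1} + \|B_1\|_{1,\infty}\prod_{k=2}^{n}\|A_k\|_{1,\infty},
\]
where $R_{n-1}$ denotes the right-hand side of \eqref{D-3-1} for the matrices indexed $2,\dots,n$, namely $\prod_{k=2}^n(\|A_k\|_{1,\infty}+\|B_k\|_{1,\infty}) - \prod_{k=2}^n\|A_k\|_{1,\infty}$.

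The remaining task is a purely algebraic simplification: substituting the expression for $R_{n-1}$ and collecting terms, the quantity $(\|A_1\|_{1,\infty}+\|B_1\|_{1,\infty})R_{n-1} + \|B_1\|_{1,\infty}\prod_{k\geq2}\|A_k\|_{1,\infty}$ should telescope precisely to $\prod_{k=1}^n(\|A_k\|_{1,\infty}+\|B_k\|_{1,\infty}) - \prod_{k=1}^n\|A_k\|_{1,\infty}$. Indeed, expanding $(\|A_1\|_{1,\infty}+\|B_1\|_{1,\infty})\prod_{k\geq2}(\|A_k\|_{1,\infty}+\|B_k\|_{1,\infty})$ recovers the full product, while the subtracted term $(\|A_1\|_{1,\infty}+\|B_1\|_{1,\infty})\prod_{k\geq2}\|A_k\|_{1,\infty}$ combines with $\|B_1\|_{1,\infty}\prod_{k\geq2}\|A_k\|_{1,\infty}$ to leave exactly $\|A_1\|_{1,\infty}\prod_{k\geq2}\|A_k\|_{1,\infty} = \prod_{k=1}^n\|A_k\|_{1,\infty}$ as the subtracted quantity. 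I do not expect any genuine obstacle here; the only thing to be careful about is the noncommutative ordering of the matrix product (so the telescoping identity must peel off the leftmost factor $A_1+B_1$, matching the product convention), but the scalar norm bound itself is commutative and the bookkeeping is routine.
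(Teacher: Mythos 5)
Your proof is correct and follows essentially the same route as the paper: induction on $n$ with a telescoping decomposition, the triangle inequality, and submultiplicativity of $\|\cdot\|_{1,\infty}$, followed by the same routine algebraic recombination of the right-hand side. The only (immaterial) difference is that you peel off the leftmost factor $(A_1+B_1)$ while the paper peels off the rightmost factor $(A_n+B_n)$, writing the difference as $[(A_1+B_1)\cdots(A_{n-1}+B_{n-1})-A_1\cdots A_{n-1}]A_n+(A_1+B_1)\cdots(A_{n-1}+B_{n-1})B_n$ --- a mirror image of your identity.
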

\begin{proof}
We use the induction on $n$. The initial step $n=1$ is trivial. Suppose that the inequality \eqref{D-3-1} holds for $n-1$. By the subadditivity and submultiplicativity of the matrix norm $\|\cdot\|_{1,\infty}$, we have
\begin{align*}
&\|(A_1+B_1)\cdots(A_n+B_n)-A_1\cdots A_n\|_{1,\infty} \\
&\hspace{1cm} \leq \|(A_1+B_1)\cdots(A_{n-1}+B_{n-1})A_n-A_1\cdots A_n\|_{1,\infty} +\|(A_1+B_1)\cdots(A_{n-1}+B_{n-1})B_n\|_{1,\infty} \\
&\hspace{1cm} \leq \|(A_1+B_1)\cdots(A_{n-1}+B_{n-1}) -A_1\cdots A_{n-1}\|_{1,\infty}\|A_n\|_{1,\infty} \\
&\hspace{1cm} \quad+(\|A_1\|_{1,\infty} +\|B_1\|_{1,\infty})\cdots(\|A_{n-1}\|_{1,\infty}+\|B_{n-1}\|_{1,\infty})\|B_n\|_{1,\infty} \\
&\hspace{1cm} \leq \big( (\|A_1\|_{1,\infty} +\|B_1\|_{1,\infty})\cdots(\|A_{n-1}\|_{1,\infty} +\|B_{n-1}\|_{1,\infty})-\|A_1\|_{1,\infty} \cdots \|A_{n-1}\|_{1,\infty} \big)\|A_n\|_{1,\infty} \\
&\hspace{1cm} \quad+(\|A_1\|_{1, \infty} +\|B_1\|_{1,\infty} )\cdots(\|A_{n-1}\|_{1,\infty} +\|B_{n-1}\|_{1,\infty})\|B_n\|_{1,\infty} \\
&\hspace{1cm} =(\|A_1\|_{1,\infty} +\|B_1\|_{1,\infty})\cdots(\|A_n\|_{1,\infty} +\|B_n\|_{1,\infty})-\|A_1\|_{1,\infty} \cdots \|A_n\|_{1,\infty}.
\end{align*}
This verifies the desired estimate \eqref{D-3-1}. 
\end{proof}

From Lemma \ref{L4.1} and Lemma \ref{L4.2}, we are ready to estimate the ergodicity coefficient of \eqref{D-2}. For this, we also introduce a new random variable
${\mathcal H}_{[n, n+m)}$ measuring the size of error:
\begin{equation}\label{D-4}
{\mathcal H}_{[n, n+m)} := 2\left[\prod_{r=n}^{n+m-1}(1+2\|H^l_r\|_{1,\infty})-1\right],
\end{equation}
where $H_n^l :=\operatorname{diag}(\eta_n^{1,l},\dots,\eta_n^{N,l})$.  
\begin{lemma}\label{L4.3}
For given integers $m \geq 1$, $n \geq 1$ and $l=1,\dots,d$,  one has
\begin{equation} \label{D-4-1}
\alpha \left( \prod_{r=n}^{n+m-1}(A_r+B_r) \right) \geq \gamma(1-\gamma)^{m-1}{\mathcal G}_{[n,n+m)} - {\mathcal H}_{[n,n+m)},
\end{equation}
where ${\mathcal G}_{[n, n+m)}$ is defined in \eqref{C-7-1}.
\end{lemma}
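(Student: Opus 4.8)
The plan is to treat the noise contribution as a perturbation of the noiseless product and then combine the ergodicity estimate of Lemma \ref{L3.4} with the perturbation bounds of Lemmas \ref{L4.1} and \ref{L4.2}. Write $P := \prod_{r=n}^{n+m-1}(A_r+B_r)$ for the full product and $Q := \prod_{r=n}^{n+m-1}A_r$ for the noiseless product, and decompose $P = Q + (P-Q)$. The superadditivity of the ergodicity coefficient (Lemma \ref{L3.2}) then gives $\alpha(P) \geq \alpha(Q) + \alpha(P-Q)$, which reduces the problem to a lower bound for each of the two summands. Note that superadditivity and Lemma \ref{L4.1} both hold for matrices with possibly negative entries, so the argument is not obstructed by the fact that $B_r$ is not sign-definite.

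For the first summand, $Q$ is precisely the product of noiseless transition matrices treated in Lemma \ref{L3.4}, so I immediately get $\alpha(Q) \geq \gamma(1-\gamma)^{m-1}{\mathcal G}_{[n,n+m)}$. For the second summand, I would apply Lemma \ref{L4.1} to obtain $\alpha(P-Q) \geq -2\|P-Q\|_{1,\infty}$ and then control the mixed norm $\|P-Q\|_{1,\infty}$ through the telescoping product bound of Lemma \ref{L4.2}, namely $\|P-Q\|_{1,\infty} \leq \prod_{r=n}^{n+m-1}(\|A_r\|_{1,\infty}+\|B_r\|_{1,\infty}) - \prod_{r=n}^{n+m-1}\|A_r\|_{1,\infty}$.

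To feed Lemma \ref{L4.2} I need the two norm inputs. Since $W_r$ is row-stochastic with nonnegative entries and $\gamma \in (0,1)$, the matrix $A_r = (1-\gamma)I_N + \gamma W_r$ is row-stochastic with nonnegative entries, so $\|A_r\|_{1,\infty} = 1$. For $B_r = -H_r^l(I_N - W_r)$, the $i$-th row of $B_r$ is $\eta_r^{i,l}$ times the $i$-th row of $I_N - W_r$, whose $\ell^1$-norm equals $2\bigl(1-\omega_{[i]_r,i}(X_r)\bigr) \leq 2$ by the row-stochasticity of $W_r$; hence $\|B_r\|_{1,\infty} \leq 2\max_i |\eta_r^{i,l}| = 2\|H_r^l\|_{1,\infty}$. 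Substituting $\|A_r\|_{1,\infty}=1$ and the monotone bound $1+\|B_r\|_{1,\infty} \leq 1+2\|H_r^l\|_{1,\infty}$ into the product formula yields $\|P-Q\|_{1,\infty} \leq \prod_{r=n}^{n+m-1}(1+2\|H_r^l\|_{1,\infty}) - 1$, so that $\alpha(P-Q) \geq -{\mathcal H}_{[n,n+m)}$ by the definition \eqref{D-4}. Adding the two bounds produces exactly \eqref{D-4-1}.

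I do not expect a serious obstacle: once the perturbative splitting $P = Q + (P-Q)$ is chosen, the proof is clean bookkeeping of the already-established lemmas. The only delicate point is the norm computation $\|B_r\|_{1,\infty} \leq 2\|H_r^l\|_{1,\infty}$, where the factor $2$ (and hence the factor $2$ appearing in the definition of ${\mathcal H}_{[n,n+m)}$) arises because the row-wise $\ell^1$-norm of $I_N - W_r$ is bounded by $2$ rather than by $1$; this is the step where the row-stochasticity of $W_r$ must be invoked carefully.
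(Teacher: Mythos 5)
Your proposal is correct and follows essentially the same route as the paper's proof: the decomposition $P = Q + (P-Q)$ with super-additivity of $\alpha$ (Lemma \ref{L3.2}), Lemma \ref{L3.4} for the noiseless product, and Lemmas \ref{L4.1}--\ref{L4.2} with $\|A_r\|_{1,\infty}\leq 1$ and $\|B_r\|_{1,\infty}\leq 2\|H_r^l\|_{1,\infty}$ for the perturbation. The only cosmetic difference is that you bound $\|B_r\|_{1,\infty}$ by computing the row $\ell^1$-norms of $I_N-W_r$ directly (obtaining the slightly sharper $2(1-\omega_{[i]_r,i})\leq 2$), whereas the paper invokes subadditivity and submultiplicativity of $\|\cdot\|_{1,\infty}$; both land at the same estimate.
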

\begin{proof}
First, we use super-additivity of $\alpha$ in Lemma \ref{L3.2}:
\begin{align}
\begin{aligned} \label{D-5}
\alpha\left( \prod_{r=n}^{n+m-1}(A_r+B_r) \right) &\geq \alpha\left( \prod_{r=n}^{n+m-1}A_r \right) + \alpha\left( \prod_{r=n}^{n+m-1}(A_r+B_r)- \prod_{r=n}^{n+m-1}A_r \right) \\
&=: {\mathcal I}_{11} + {\mathcal I}_{12}.
\end{aligned}
\end{align}
\vspace{0.2cm}

\noindent $\bullet$~(Estimate of ${\mathcal I}_{11}$):  This case has already been treated in Lemma \ref{L3.4}:
\begin{equation}\label{D-6}
\alpha\left( \prod_{r=n}^{n+m-1}A_r \right) \geq \gamma(1-\gamma)^{m-1} {\mathcal G}_{[n, n+m)}.
\end{equation}

\noindent $\bullet$~(Estimate of ${\mathcal I}_{12}$): The term ${\mathcal I}_{12}$ can be regarded as an error term due to external stochastic noise. We may use Lemma \ref{L4.1} to get a lower bound of this term:%the ergodicity coefficient for $\left( \prod_{r=n}^{n+m-1}(A_r+B_r)- \prod_{r=n}^{n+m-1}A_r \right)$:
\begin{equation}\label{D-7}
\begin{aligned}
\alpha\left( \prod_{r=n}^{n+m-1}(A_r+B_r)- \prod_{r=n}^{n+m-1}A_r \right) \geq -2\left\| \prod_{r=n}^{n+m-1}(A_r+B_r)- \prod_{r=n}^{n+m-1}A_r \right\|_{1,\infty}.
\end{aligned}
\end{equation}
By Lemma \ref{L4.2}, one gets
\begin{equation}\label{D-8}
\begin{aligned}
\left\| \prod_{r=n}^{n+m-1}(A_r+B_r)- \prod_{r=n}^{n+m-1}A_r \right\|_{1,\infty} \leq \prod_{r=n}^{n+m-1}(\|A_r\|_{1,\infty} +\|B_r\|_{1,\infty}) - \prod_{r=n}^{n+m-1}\|A_r\|_{1,\infty}.
%&\|(A_1+B_1)\cdots(A_n+B_n)-A_1\cdots A_n\|_\infty\\
%&\quad\leq (\|A_1\|_\infty+\|B_1\|_\infty)\cdots(\|A_n\|_\infty+\|B_n\|_\infty)-\|A_1\|_\infty\cdots \|A_n\|_\infty.
\end{aligned}
\end{equation}
By the defining relations \eqref{C-2}, $A_r$ and $B_r$ can be estimated as follows.
\begin{equation}\label{D-9}
\begin{aligned}
&\|A_r\|_{1,\infty} = \|(1-\gamma)I_N+ \gamma  W_r\|_{1,\infty} \leq 1,\\
&\|B_r\|_{1,\infty} = \|H^l_r(I_N-W_r)\|_{1,\infty} \leq \|H^l_r\|_{1,\infty} (\|I_N\|_{1,\infty}+\|W_r\|_{1,\infty}) = 2\|H^l_r\|_{1,\infty}.\\
\end{aligned}
\end{equation}
Now, we combine \eqref{D-7}, \eqref{D-8} and \eqref{D-9} to estimate ${\mathcal I}_{12}$:
\begin{equation}\label{D-10}
\begin{aligned}
{\mathcal I}_{12} \geq -2\left[\prod_{r=n}^{n+m-1}(1+2\|H^l_r\|_{1,\infty})-1\right] = - {\mathcal H}_{[n,n+m)}.
\end{aligned}
\end{equation}
Finally, combining \eqref{D-5}, \eqref{D-6} and \eqref{D-10} yield the desired estimate:
\begin{equation*}
\alpha\left( \prod_{r=n}^{n+m-1}(A_r+B_r) \right) \geq \gamma(1-\gamma)^{m-1}{\mathcal G}_{[n, n+m)} - {\mathcal H}_{[n,n+m)}.
\end{equation*}
%Finally, we use Lemma \ref{L2.1} (with $\alpha=1$) to conclude the result.
\end{proof}
\begin{remark}
For the zero noise case $\zeta = 0$, the random variable ${\mathcal H}_{[n, n+m)}$ becomes zero and the estimate \eqref{D-4-1} reduces to \eqref{C-8}. 
\end{remark}
\subsection{  Stochastic consensus } \label{sec:4.2} 
Recall the discrete system for ${\mathfrak x}^l_{n}$:
\[  \mathfrak x^l_{n+1} = (A_n+B_n)\mathfrak x^l_{n}. \]
By iterating the above relation $m$ times, one gets 
\begin{equation} \label{D-10-1}
\mathfrak x^l_{n+m} = \Big(  \prod_{r=n}^{n+m-1}(A_r+B_r) \Big) \mathfrak x^l_{n}. 
\end{equation}
\begin{lemma}\label{L4.4}
Let ${\{ \bold x}^i_{n}\}$ be a solution process to \eqref{A-3} and let $m\geq1$ and $k\geq0$ be given integers. Then, for $n\in[km,(k+1)m)$ and $l=1,\dots,d$,
\begin{equation*}
\begin{aligned}
\mathcal D(\mathfrak x_n^l)
&\leq{\mathcal D}(\mathfrak x_{0}^l)\left( 1+{\mathcal H}_{[km,n)} \right)\prod_{s=1}^{k}\Big( 1 - \gamma(1-\gamma)^{m-1}{\mathcal G}_{[(s-1)m, sm)} + {\mathcal H}_{[(s-1)m, sm)} \Big).%\leq {\mathcal D}(\mathfrak x_{0}^l)\left( 1+{\mathcal H}_{[km, n)} \right)\exp\bigg[ - \gamma  (1-\gamma)^{m-1 }\sum_{s=1}^k{\mathcal G}_{[(s-1)m, sm)}+ \sum_{s=1}^{k} {\mathcal H}_{[(s-1)m, sm)} \bigg].
\end{aligned}
\end{equation*}
\end{lemma}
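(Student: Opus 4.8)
The plan is to combine the $m$-step iteration of the diameter recursion with Lemma~\ref{L3.3} and the ergodicity lower bound from Lemma~\ref{L4.3}. First I would establish the one-block estimate. Applying Lemma~\ref{L3.3} to the $m$-transition relation \eqref{D-10-1} (the product $\prod_{r=n}^{n+m-1}(A_r+B_r)$ need not be row-stochastic because of the noise matrices $B_r$, so one must check that it still has equal row sums $a$ for some $a$; indeed $A_r$ is row-stochastic by Lemma~\ref{L3.1} and $B_r = H_r^l(I_N - W_r)$ has zero row sums since $I_N - W_n$ does, so each factor $A_r+B_r$ has row sum $1$, hence so does the product). With $a=1$, Lemma~\ref{L3.3} gives
\[
\mathcal D(\mathfrak x_{sm}^l) \leq \Big( 1 - \alpha\Big( \prod_{r=(s-1)m}^{sm-1}(A_r+B_r)\Big)\Big)\mathcal D(\mathfrak x_{(s-1)m}^l).
\]
Then I would invoke Lemma~\ref{L4.3} to bound the ergodicity coefficient from below by $\gamma(1-\gamma)^{m-1}\mathcal G_{[(s-1)m,sm)} - \mathcal H_{[(s-1)m,sm)}$, which turns the factor into
\[
1 - \gamma(1-\gamma)^{m-1}\mathcal G_{[(s-1)m,sm)} + \mathcal H_{[(s-1)m,sm)}.
\]

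Next I would iterate this block estimate over $s = 1,\dots,k$ to pass from $\mathcal D(\mathfrak x_0^l)$ to $\mathcal D(\mathfrak x_{km}^l)$, producing the product $\prod_{s=1}^{k}\big(1-\gamma(1-\gamma)^{m-1}\mathcal G_{[(s-1)m,sm)}+\mathcal H_{[(s-1)m,sm)}\big)$. To handle the remaining partial block from $km$ to $n$ (where $n\in[km,(k+1)m)$), I would run the same argument once more on the shorter product $\prod_{r=km}^{n-1}(A_r+B_r)$. Here the subtlety is that for a partial block I do not want to keep the (possibly positive, hence helpful) $\mathcal G$ term — it suffices to bound $1-\alpha(\cdots) \leq 1 + \mathcal H_{[km,n)}$ by simply discarding the nonnegative ergodicity contribution $\gamma(1-\gamma)^{n-km-1}\mathcal G_{[km,n)}\geq 0$ and retaining only the error term $\mathcal H_{[km,n)}$ from Lemma~\ref{L4.3}. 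This yields the prefactor $\big(1+\mathcal H_{[km,n)}\big)$.

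The one genuine obstacle is ensuring each multiplicative step is an \emph{upper} bound on $\mathcal D$, i.e. that the factors $1-\alpha(\cdots)$ one multiplies are all nonnegative, since Lemma~\ref{L3.3} only gives a useful contraction when the bracket is nonnegative and iterating an inequality through possibly-negative factors can reverse it. I would address this by noting that $\mathcal D$ is always nonnegative and that Lemma~\ref{L3.3} holds verbatim regardless of the sign of $a-\alpha(A)$; to chain the inequalities safely I replace the true ergodicity coefficient by its lower bound from Lemma~\ref{L4.3} only after confirming the resulting factor $1-\gamma(1-\gamma)^{m-1}\mathcal G+\mathcal H$ is itself $\geq 0$ (which holds since $1-\alpha \geq 0$ for any matrix with row sums $1$ whose diameter-contraction factor cannot exceed $1$, and our lower-bound factor dominates $1-\alpha$). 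Once monotonicity of the chained product is secured, combining the full-block product with the partial-block prefactor gives exactly the claimed bound, and I would close by remarking that in the noiseless case $\mathcal H\equiv 0$ this recovers the estimate of Theorem~\ref{T3.1}.
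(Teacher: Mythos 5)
Your proposal is correct and follows essentially the same route as the paper: a partial-block estimate $\mathcal D(\mathfrak x_n^l)\leq(1+\mathcal H_{[km,n)})\mathcal D(\mathfrak x_{km}^l)$ obtained by discarding the nonnegative $\mathcal G$ term, then an induction over full blocks using Lemma \ref{L3.3} with $a=1$ together with the lower bound of Lemma \ref{L4.3}. Your explicit checks — that $A_r+B_r$ has row sums $1$ because $B_r=H_r^l(I_N-W_r)$ has zero row sums, and that each factor $1-\alpha\leq 1-\gamma(1-\gamma)^{m-1}\mathcal G+\mathcal H$ is nonnegative (indeed $\alpha(A)\leq a$ follows from taking $i=j$ in Definition \ref{D3.1}) so the inequalities chain safely — are points the paper leaves implicit, and they are handled correctly.
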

\begin{proof} We will follow the same procedure employed in Theorem \ref{T3.1}, i.e., we first bound $\mathcal D(\mathfrak x_n^l)$ using ${\mathcal D}(\mathfrak x_{km}^l)$, and then we bound ${\mathcal D}(\mathfrak x_{km}^l)$ using ${\mathcal D}(\mathfrak x_{0}^l)$. \newline

\noindent $\bullet$~Step A:  For $n\in[km,(k+1)m)$, we estimate ${\mathcal D}(\bold x_{n}^l)$ by using ${\mathcal D}(\bold x_{km}^l)$:% as in Step A:
\begin{equation}\label{D-10-1-0}
\begin{aligned}
{\mathcal D}(\mathfrak x_{n}^l) &\leq \left(1-\alpha \left( \prod_{r=km}^{n-1}(A_r+B_r) \right)\right){\mathcal D}(\mathfrak x_{km}^l) \\
 &\leq \left( 1-\gamma(1-\gamma)^{n-km-1}{\mathcal G}_{[km, n)}+{\mathcal H}_{[km, n)} \right){\mathcal D}(\mathfrak x_{km}^l)\\
&\leq \left( 1+{\mathcal H}_{[km, n)} \right){\mathcal D}(\mathfrak x_{km}^l),
\end{aligned} 
\end{equation}
where we used the fact that ${\mathcal G}_{[km, n)} \geq 0$ in the last inequality. 

\vspace{0.2cm}

\noindent $\bullet$~Step B: We claim
\begin{equation} \label{D-10-1-1}
{\mathcal D}(\mathfrak x_{km}^l) \leq \prod_{s=1}^{k}\Big(1-  \gamma(1-\gamma)^{m-1}{\mathcal G}_{[(k-1)m, km)} - {\mathcal H}_{[(k-1)m, km)} \Big){\mathcal D}(\mathfrak x_{0}^l).  
\end{equation}
{\it Proof of claim:} By setting $n = (k-1) m$ in \eqref{D-10-1},  one has
\[  \mathfrak x^l_{km} = \Big( \prod_{r=(k-1)m}^{km-1} (A_r+B_r) \Big) {\mathfrak x}^l_{(k-1)m}. \]
Then, we use Lemma \ref{L3.1} (with $a=1$) to \eqref{D-10-1} to obtain
\begin{equation} \label{D-10-2}
{\mathcal D}(\mathfrak x_{km}^l) \leq \left(1-\alpha \left( \prod_{r=(k-1)m}^{km-1}(A_r+B_r) \right) \right){\mathcal D}(\mathfrak x_{(k-1)m}^l).
\end{equation}
By induction on $k$, the relation \eqref{D-10-2} yields
\begin{equation}\label{D-11}
\begin{aligned}
{\mathcal D}(\mathfrak x_{km}^l)&\leq \prod_{s=1}^{k}\left(1-\alpha \left( \prod_{r=(s-1)m}^{sm-1}(A_r+B_r) \right)\right){\mathcal D}(\mathfrak x_{0}^l). 
\end{aligned}
\end{equation}
On the other hand, it follows from Lemma \ref{L4.3} that 
\begin{equation}\label{D-12}
\alpha \left( \prod_{r=(k-1)m}^{km-1}(A_r+B_r) \right) \geq \gamma(1-\gamma)^{m-1}{\mathcal G}_{[(k-1)m, km)} - {\mathcal H}_{[(k-1)m, km)}.
\end{equation}
Finally, one combines \eqref{D-11} and \eqref{D-12} to derive \eqref{D-10-1-1}. \newline

\vspace{0.2cm}

\noindent $\bullet$~Final step: By combining \eqref{D-10-1-0} and \eqref{D-10-1-1}, one has
\begin{align*}
\begin{aligned} \label{D-13-1}
{\mathcal D}(\mathfrak x_{n}^l)
&\leq \left( 1+{\mathcal H}_{[km, n)} \right){\mathcal D}(\mathfrak x_{km}^l)\\
&\leq {\mathcal D}(\mathfrak x_{0}^l)\left( 1+{\mathcal H}_{[km,n)} \right)\prod_{s=1}^{k}\Big( 1 - \gamma(1-\gamma)^{m-1}{\mathcal G}_{[(s-1)m, sm)} + {\mathcal H}_{[(s-1)m, sm)} \Big).
\end{aligned}
\end{align*}
%Finally, we use \eqref{D-13-1} and the inequality $1+x \leq e^x$  to derive the desired result.
\end{proof}
Now, we are ready to provide an exponential decay of ${\mathbb E} {\mathcal D}(\mathfrak x_{n}^l)$.
\begin{theorem}\label{T4.1}
\emph{($L^1(\Omega)$-consensus)}
Let   $\{ {\bold x}^i_n \}$ be a solution process to \eqref{A-3}, and let $m_0$ be a positive constant defined in Lemma \ref{L3.5}. Then there exists $\zeta_1>0$ independent of the dimension $d$, such that if $0\leq\zeta<\zeta_1$, there exists some positive constants $C_1 = C_1(N, m_0, \zeta)$ and $\Lambda_1=\Lambda_1(\gamma,N,m_0,\zeta)$ such that 
\[ \mathbb E\mathcal D(\mathfrak x_n^l)\leq  C_1 \mathbb E\mathcal D(\mathfrak x_{0}^l)\exp\left(-\Lambda_1 n\right),\quad n \geq 0. \]
In particular,
\[ \max_{i,j} \|\bold x_n^i- \bold x_n^j \|_\infty \leq  C_1 \max_{i,j} \|\bold x_0^i- \bold x_0^j \|_\infty\exp\left(-\Lambda_1 n\right),\quad n \geq 0. \]
\end{theorem}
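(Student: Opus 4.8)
The plan is to turn the pathwise estimate of Lemma~\ref{L4.4} into a bound on the expectation by exploiting the product structure and the independence of the random sources. Fixing $m=m_0$ and $n\in[km_0,(k+1)m_0)$, Lemma~\ref{L4.4} gives
\[
\mathcal D(\mathfrak x_n^l)\leq \mathcal D(\mathfrak x_0^l)\,\bigl(1+\mathcal H_{[km_0,n)}\bigr)\prod_{s=1}^k\Bigl(1-\gamma(1-\gamma)^{m_0-1}\mathcal G_{[(s-1)m_0,sm_0)}+\mathcal H_{[(s-1)m_0,sm_0)}\Bigr).
\]
The crucial observation is that the random quantities on the right are built from mutually independent sources: $\mathcal D(\mathfrak x_0^l)$ depends only on the initial data, each $\mathcal G_{[(s-1)m_0,sm_0)}$ only on the batches drawn in the window $[(s-1)m_0,sm_0)$, and each $\mathcal H_{[(s-1)m_0,sm_0)}$ only on the noises in that window; since the windows are pairwise disjoint and the batches, the noises, and the initial data are independent, the whole right-hand side factorizes under $\mathbb E$. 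As a sanity check I would first note each factor is nonnegative: because $\prod_r A_r$ is row-stochastic (Lemma~\ref{L3.1}) we have $\alpha(\prod_r A_r)\leq1$, so Lemma~\ref{L3.4} gives $\gamma(1-\gamma)^{m_0-1}\mathcal G_{[(s-1)m_0,sm_0)}\leq\alpha(\prod_r A_r)\leq1$, while $\mathcal H\geq0$.

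Taking expectations and using both independence across windows and the fact that the windows are statistically identical (batches and noises are i.i.d.\ in $n$), together with $\mathbb E\mathcal H_{[km_0,n)}\leq\mathbb E\mathcal H_{[0,m_0)}$ for the shorter trailing window (the base of the product in \eqref{D-4} is $\geq1$), I obtain
\[
\mathbb E\mathcal D(\mathfrak x_n^l)\leq \mathbb E\mathcal D(\mathfrak x_0^l)\,\bigl(1+\mathbb E\mathcal H_{[0,m_0)}\bigr)\,r^{\,k},\qquad r:=1-\gamma(1-\gamma)^{m_0-1}p_{m_0}+\mathbb E\mathcal H_{[0,m_0)},
\]
where $p_{m_0}=\mathbb E\mathcal G_{[0,m_0)}>0$ by Lemma~\ref{L3.5}. (Only linearity of expectation is used here, so no independence of $\mathcal G$ and $\mathcal H$ within a window is needed.)

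The next step is to make $r<1$ for small $\zeta$. From \eqref{D-4} and independence of $H_r^l$ across $r$,
\[
\mathbb E\mathcal H_{[0,m_0)}=2\Bigl[(1+2\mathbb E\|H_0^l\|_{1,\infty})^{m_0}-1\Bigr],
\]
and since $\|H_0^l\|_{1,\infty}=\max_i|\eta_0^{i,l}|$, Jensen's inequality gives the dimension-free bound $\mathbb E\|H_0^l\|_{1,\infty}\leq(\sum_i\mathbb E|\eta_0^{i,l}|^2)^{1/2}\leq\sqrt N\,\zeta$. Hence $\mathbb E\mathcal H_{[0,m_0)}\leq 2[(1+2\sqrt N\zeta)^{m_0}-1]\to0$ as $\zeta\to0$, uniformly in $l$ and $d$, so there is $\zeta_1>0$ (depending only on $\gamma,N,m_0$) such that $0<r<1$ whenever $0\leq\zeta<\zeta_1$; indeed $r\to1-\gamma(1-\gamma)^{m_0-1}p_{m_0}\in(0,1)$ as $\zeta\to0$.

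Finally I would convert $r^{k}$ into an exponential in $n$. Since $n<(k+1)m_0$ we have $k\geq n/m_0-1$, whence $r^{k}\leq r^{-1}\exp\bigl((\ln r/m_0)\,n\bigr)$; setting $\Lambda_1:=-\tfrac1{m_0}\ln r>0$ and absorbing the $\zeta$-uniform prefactor $r^{-1}(1+\mathbb E\mathcal H_{[0,m_0)})$ into a constant $C_1=C_1(N,m_0,\zeta)$ yields $\mathbb E\mathcal D(\mathfrak x_n^l)\leq C_1\,\mathbb E\mathcal D(\mathfrak x_0^l)\exp(-\Lambda_1 n)$. Because $\zeta_1,C_1,\Lambda_1$ do not depend on $l$, taking the maximum over $l$ and using $\max_{i,j}\|\bold x_n^i-\bold x_n^j\|_\infty=\max_{1\leq l\leq d}\mathcal D(\mathfrak x_n^l)$ (as in Theorem~\ref{T3.1}(ii)) gives the ``in particular'' claim. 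The main obstacle is the step making $r<1$ uniformly in the dimension: the consensus gain $\gamma(1-\gamma)^{m_0-1}p_{m_0}$ is a fixed positive number, while the noise penalty $\mathbb E\mathcal H_{[0,m_0)}$ must be dominated independently of $d$, which is precisely why the $\ell^2$-based estimate $\mathbb E\max_i|\eta_0^{i,l}|\leq\sqrt N\zeta$ (rather than a cruder, dimension-sensitive bound) is essential.
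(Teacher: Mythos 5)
Your proposal is correct and follows essentially the same route as the paper's proof: the pathwise bound of Lemma~\ref{L4.4} with $m=m_0$, factorization of the expectation via independence of the batch and noise variables across disjoint windows, the dimension-free estimate $\mathbb E\|H_0^l\|_{1,\infty}\leq\sqrt N\,\zeta$, smallness of $\zeta$ to beat the consensus gain $\gamma(1-\gamma)^{m_0-1}p_{m_0}$, and the conversion from $k=\lfloor n/m_0\rfloor$ to $n$. The only differences are cosmetic refinements: you keep the product $r^k$ directly (yielding the slightly sharper exponent $\Lambda_1=-\tfrac{1}{m_0}\ln r$, versus the paper's use of $1+x\leq e^x$ and the bound $k/n\geq 1/m_0^2$, which gives a rate with prefactor $1/m_0^2$), and you add the harmless observation that each factor is nonnegative via $\alpha\bigl(\prod_r A_r\bigr)\leq1$.
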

\begin{proof} 
\noindent  Assume $n\in[km_0,(k+1)m_0)$ $(k\geq0)$. It follows from Lemma \ref{L4.4} that 
\begin{equation} \label{D-13-2}
\mathcal D(\mathfrak x_n^l) \leq {\mathcal D}(\mathfrak x_{0}^l)\left( 1+{\mathcal H}_{[km_0, n)} \right)\prod_{s=1}^{k}\Big( 1 - \gamma(1-\gamma)^{m_0-1}{\mathcal G}_{[(s-1)m_0, sm_0)} + {\mathcal H}_{[(s-1)m_0, sm_0)} \Big).
\end{equation}
Since the following random variables
\[ {\mathcal D}(\mathfrak x_{0}^l), \quad {\mathcal H}_{[km_0, n)}, \quad {\mathcal G}_{[(s-1)m_0, sm_0)}, \quad  {\mathcal H}_{[(s-1)m_0, sm_0)}, \quad s = 1, \cdots, k \]
are independent, we can take expectations on both sides of the estimate in \eqref{D-13-2} and then use the inequality $1+x\leq e^x$ to see that %for $n \geq km$,
\begin{align}\label{D-14}
\begin{aligned}
{\mathbb E}{\mathcal D}(\mathfrak x_{n}^l) &\leq\mathbb E {\mathcal D}(\mathfrak x_{0}^l)\left( 1+\mathbb E{\mathcal H}_{[km_0, n)} \right)\prod_{s=1}^{k}\Big( 1 - \gamma(1-\gamma)^{m_0-1}\mathbb E{\mathcal G}_{[(s-1)m_0, sm_0)} +\mathbb E {\mathcal H}_{[(s-1)m_0, sm_0)} \Big)\\
&\leq {\mathbb E}{\mathcal D}(\mathfrak x_{0}^l)\left(1+ {\mathbb E} {\mathcal H}_{[km_0,n)} \right)    \exp\bigg[ - \gamma  (1-\gamma)^{m_0-1 }\sum_{s=1}^k{\mathbb E}{\mathcal G}_{[(s-1)m_0, sm_0)}
+ \sum_{s=1}^{k} {\mathbb E}{\mathcal H}_{[(s-1)m_0, sm_0)} \bigg].
\end{aligned}
\end{align}
In the sequel, we estimate the terms ${\mathbb E}{\mathcal G}_{[(s-1)m_0, sm_0)} $ and ${\mathbb E}{\mathcal H}_{[(s-1)m_0, sm_0)}$ one by one. \newline

\noindent $\bullet$~Case A (Estimate of ${\mathbb E}{\mathcal H}_{[(s-1)m_0, sm_0)}$): Recall  the defining relation ${\mathcal H}_{[n,n+m_0)}$ in \eqref{D-4}:
\[
{\mathcal H}_{[(s-1)m_0, sm_0)} := 2\left[\prod_{(s-1)m_0}^{sm_0-1}(1+2\|H^l_r\|_{1,\infty})-1\right].
\]
By taking  expectation of ${\mathcal H}_{[(s-1)m_0, sm_0)}$,
\begin{equation}\label{D-15}
\begin{aligned}
{\mathbb E} {\mathcal H}_{[(s-1)m_0, sm_0)}
&= 2\left[ \prod_{r=(s-1)m_0}^{sm_0-1} \left( 1+2{\mathbb E} \| H^l_r \|_{1,\infty} \right) -1 \right]
= 2\left[ \left( 1+2{\mathbb E} \| H^l_0 \|_{1,\infty} \right)^{m_0} -1 \right]\\
&\leq 2\left[ \left( 1+2\sqrt{N} \zeta \right)^{m_0} -1 \right],
\end{aligned}
\end{equation}
where we used the fact ${\mathbb E} |x| \leq \sqrt{{\mathbb E} |x|^2}$ to get
\[  {\mathbb E} \| H_n^l \|_{1,\infty} ={\mathbb E}  \max_{k} |\eta_n^{k,l} | \leq {\mathbb E} \sqrt{  \sum_{k} |\eta^{k,l}_n|^2 } \leq \sqrt{ {\mathbb E} \sum_{k} |\eta_{n}^{k,l}|^2         } \leq \sqrt{N} \zeta. \]
%For $n \in [km, (k+1) m)$,  it follows from \eqref{D-10-1-0} that 
%\[
%{\mathcal D}(\mathfrak x_{n}^l) \leq \left( 1+{\mathcal H}_{[km, n)} \right){\mathcal D}(\mathfrak x_{km}^l).
%\]
%Again, this yields
%\begin{align}
%\begin{aligned} \label{D-15-1}
%{\mathbb E}{\mathcal D}(\mathfrak x_{n}^l) &\leq  \left( 1+{\mathbb E}{\mathcal H}_{[km, n)} \right) {\mathbb E}{\mathcal D}(\mathfrak x_{km}^l) \leq  {\mathbb E}{\mathcal D}(\mathfrak x_{km}^l) (1+2[ (1+2\sqrt{N}\zeta)^{n-km}-1]) \\
%&\leq {\mathbb E}{\mathcal D}(\mathfrak x_{km}^l) (1+2[ (1+2\sqrt{N}\zeta)^m-1]).
%\end{aligned}
%\end{align}

\vspace{0.5cm}

\noindent $\bullet$~Case B (Estimate of ${\mathbb E}{\mathcal G}_{[(s-1)m_0, sm_0)}$):~By \eqref{C-10} of Lemma \ref{L3.5}, one has 
\[ {\mathbb E}{\mathcal G}_{[(s-1)m_0, sm_0)} = p_{m_0}. \]

 Combining \eqref{D-14} and \eqref{D-15}  yields %for $n \geq km$, 
\begin{equation}\label{D-16}
\begin{aligned}
{\mathbb E}{\mathcal D}(\mathfrak x_{n}^l) 
%&\leq {\mathbb E}\mathcal D(\bold x_{0}^l) (1+2[ (1+2N\zeta)^m-1])\\
%&\quad\times\exp\bigg[ - \gamma  (1-\gamma)^{m-1 }\sum_{s=1}^k{\mathbb E}[{\mathcal G}_{sm,(s-1)m}] + \sum_{s=1}^{k} {\mathbb E}[{\mathcal H}_{sm,(s-1)m}] \bigg] \\
&\leq {\mathbb E}\mathcal D(\mathfrak x_{0}^l) (1+2[ (1+2\sqrt{N} \zeta)^{n-km_0}-1])\\
&\quad\times\exp\left\{ -k\gamma(1-\gamma)^{m_0-1}p_{m_0} +2k\left[\left(1 +2\sqrt{N}\zeta\right)^{m_0}-1\right]\right \}.
\end{aligned}
\end{equation}
Note that our goal is to estimate ${\mathbb E}{\mathcal D}(\mathfrak x_{n}^l)$ in terms of $n$. Then, $k$ is actually a function of $n$. To be more specific,
\[ k=k(n)=\Big\lfloor\frac{n}{m_0}\Big\rfloor. \]
If $n \geq m_0$, then
\begin{equation}\label{D-17}
\frac{k}{n} \geq \frac{1}{n}\left(\frac{n}{m_0}-1+\frac{1}{m_0}\right)=\frac{1}{m_0}-\frac{1}{n}\left(1-\frac{1}{m_0}\right)\geq\frac{1}{m_0}-\frac{1}{m_0}\left(1-\frac{1}{m_0}\right)=\frac{1}{m_0^2}.
%\lim_{n\to\infty}\frac{k}{n}=\lim_{n\to\infty}\frac{k+1}{n}=\frac{1}{m}.
\end{equation}
If $1\leq n\leq m_0$, then
\begin{equation}\label{D-117}
\frac{m_0}{n}\geq1
\end{equation}
Therefore, by combining \eqref{D-16}, \eqref{D-17}, \eqref{D-117} one has 
\[
\mathbb E\mathcal D(\mathfrak x_n^l)\leq \exp\left(-\widehat \Lambda_n\cdot n\right)\mathbb E\mathcal D(\mathfrak x_{0}^l)(1+2[ (1+2\sqrt{N}\zeta)^{m_0-1}-1])e^{m_0},\quad n \geq 1,
\]
where the sequence $\{\widehat \Lambda_n\}_{n\geq 1}$ satisfies
\begin{equation*}
\begin{aligned}
\widehat \Lambda_n &:= \frac{k}{n}\Big(\gamma(1-\gamma)^{m_0-1}p_{m_0} -2\left[\left(1 +2\sqrt{N}\zeta\right)^{m_0}-1\right] \Big) +\frac{m_0}{n}\\
&\geq \min\left\{\frac{1}{m_0^2}\Big(\gamma(1-\gamma)^{m_0-1}p_{m_0} -2\left[\left(1 +2\sqrt{N}\zeta\right)^{m_0}-1\right] \Big) ,1 \right\}\\
&= \frac{1}{m_0^2}\Big(\gamma(1-\gamma)^{m_0-1}p_{m_0} -2\left[\left(1 +2\sqrt{N}\zeta\right)^{m_0}-1\right] \Big) =: \Lambda_1(\gamma,N,m,\zeta)>0,
\end{aligned}
\end{equation*}
provided that $\zeta$ is sufficiently small. Hence%, $\Lambda_1$ is positive and 
\[ \mathbb E\mathcal D(\mathfrak x_n^l)\leq \exp\left(-\Lambda_1\cdot n\right)\mathbb E\mathcal D(\mathfrak x_{0}^l)(1+2[ (1+2\sqrt{N} \zeta)^{m_0-1}-1])e^{m_0},\quad n \geq 0. \]
Now by setting
\[ C_1 :=(1+2[ (1+2\sqrt{N} \zeta)^{m_0-1}-1])e^{m_0} \]
one gets the desired result.
\end{proof}
In the next theorem, we derive almost sure consensus of \eqref{A-3}.
\begin{theorem}\label{T4.2}
\emph{(Almost sure consensus)}
Let ${\{ \bold x}^i_{n}\}$ be a solution process to \eqref{A-3}, and let $m_0$ be a positive constant defined in Lemma \ref{L3.5}. Then there exists $\zeta_2 >0$ independent of the dimension $d$, such that if $0\leq\zeta<\zeta_2$ and $l \in \{1, \cdots, d\}$, then there exist a positive constant $\Lambda_2=\Lambda_2(\gamma,N,m,\zeta)$ and random variables $C_2^l=C_2^l(\omega)$ and $C_2 = \displaystyle\max_{1\leq l\leq d}C_2^l>0$ such that
\begin{align*}
\begin{aligned}
& (i)~\mathcal D(\mathfrak x_n^l)\leq C^l_2(\omega) \exp\left(-\Lambda_2 n\right), \quad n \geq 0, \quad \quad \mbox{a.s.} \\
& (ii)~\max_{i,j} \|\bold x_n^i- \bold x_n^j \|_\infty \leq C_2 \exp\left(-\Lambda_2 n\right),\quad n\geq 0,\quad \mbox{a.s.}
\end{aligned}
\end{align*}
\end{theorem}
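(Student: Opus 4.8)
The plan is to upgrade the pathwise estimate of Lemma \ref{L4.4} to an almost sure exponential bound by taking logarithms and invoking the strong law of large numbers (SLLN), in the same spirit as the passage from Lemma \ref{L4.4} to Theorem \ref{T4.1}, but retaining the pathwise (rather than averaged) fluctuations. Fixing $l$ and writing $n\in[km_0,(k+1)m_0)$ with $k=\lfloor n/m_0\rfloor$, I would first apply Lemma \ref{L4.4} with $m=m_0$ and use $1+x\le e^x$ on each factor to obtain
\begin{equation*}
\mathcal D(\mathfrak x_n^l)\le \mathcal D(\mathfrak x_0^l)\,(1+\mathcal H_{[km_0,n)})\exp\Big(\sum_{s=1}^k Q_s\Big),\qquad Q_s:=-\gamma(1-\gamma)^{m_0-1}\mathcal G_{[(s-1)m_0,sm_0)}+\mathcal H_{[(s-1)m_0,sm_0)}.
\end{equation*}
The key structural observation is that $\{Q_s\}_{s\ge1}$ are i.i.d.\ (the batch choices and the noises are i.i.d.\ in $n$, and the windows $[(s-1)m_0,sm_0)$ are disjoint), with finite mean since $\mathbb E\mathcal G_{[0,m_0)}=p_{m_0}$ and $\mathbb E\mathcal H_{[0,m_0)}=2[(1+2\mathbb E\|H_0^l\|_{1,\infty})^{m_0}-1]\le 2[(1+2\sqrt N\zeta)^{m_0}-1]<\infty$, exactly the two quantities controlled in the proof of Theorem \ref{T4.1}.

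Next I would apply the SLLN to $\{Q_s\}$ to obtain, almost surely,
\begin{equation*}
\frac1k\sum_{s=1}^k Q_s\;\xrightarrow[k\to\infty]{}\;\mathbb E Q_1\le -\Big(\gamma(1-\gamma)^{m_0-1}p_{m_0}-2\big[(1+2\sqrt N\zeta)^{m_0}-1\big]\Big)=:-\lambda.
\end{equation*}
Choosing $\zeta_2>0$ so that $\lambda>0$ whenever $0\le\zeta<\zeta_2$ makes this drift strictly negative; the threshold depends only on $\gamma,N,m_0$ and is independent of $d$, since the right-hand side does not involve $d$. To dispose of the boundary factor, I would note $\mathcal H_{[km_0,n)}\le \mathcal H_{[km_0,(k+1)m_0)}$, that $\{\mathcal H_{[km_0,(k+1)m_0)}\}_k$ are i.i.d.\ nonnegative and integrable, and invoke the elementary fact that an i.i.d.\ nonnegative integrable sequence $Y_k$ satisfies $Y_k/k\to0$ a.s.; together with $\log(1+x)\le x$ this yields $\tfrac1k\log(1+\mathcal H_{[km_0,n)})\to0$ a.s.

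Combining these and using $k/n\to 1/m_0$, I would take logarithms in the first display and divide by $n$ to reach
\begin{equation*}
\limsup_{n\to\infty}\frac1n\log\mathcal D(\mathfrak x_n^l)\le \frac{1}{m_0}\,\mathbb E Q_1\le -\frac{\lambda}{m_0}\quad\text{a.s.}
\end{equation*}
Fixing any $\Lambda_2\in(0,\lambda/m_0)$ (for definiteness one may take $\Lambda_2=\Lambda_1$ from Theorem \ref{T4.1}, which lies strictly below $\lambda/m_0$ for $m_0>1$), the strict gap forces $\mathcal D(\mathfrak x_n^l)e^{\Lambda_2 n}\to0$ a.s., so $C_2^l(\omega):=\sup_{n\ge0}\mathcal D(\mathfrak x_n^l)e^{\Lambda_2 n}$ is finite almost surely and gives (i). Assertion (ii) then follows from the identity $\max_{i,j}\|\bold x_n^i-\bold x_n^j\|_\infty=\max_{1\le l\le d}\mathcal D(\mathfrak x_n^l)$ (already used in Theorem \ref{T3.1}) upon setting $C_2:=\max_{1\le l\le d}C_2^l$.

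The main obstacle I anticipate is not the SLLN step itself but the careful conversion of the almost sure statement about $\limsup\frac1n\log\mathcal D(\mathfrak x_n^l)$ into the clean multiplicative form $C_2^l(\omega)e^{-\Lambda_2 n}$ with an almost surely finite random prefactor, together with the handling of the shifting boundary term $1+\mathcal H_{[km_0,n)}$. Both points hinge on exploiting the finite second moments of the noises to guarantee integrability of $\mathcal H_{[0,m_0)}$ and the $o(k)$ growth of the boundary factor, and it is precisely here that the smallness of $\zeta$ (ensuring $\lambda>0$) and the i.i.d.\ structure of the random batches and heterogeneous noises are indispensable.
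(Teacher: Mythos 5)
Your proposal is correct and follows essentially the same route as the paper: both start from Lemma \ref{L4.4}, apply $1+x\leq e^x$, invoke the strong law of large numbers on the i.i.d.\ window variables $\mathcal G_{[(s-1)m_0,sm_0)}$ and $\mathcal H_{[(s-1)m_0,sm_0)}$ to get an almost sure limiting decay rate, choose $\Lambda_2$ strictly below it, and absorb the finitely many early times into an almost surely finite random prefactor $C_2^l(\omega)$, concluding (ii) via $\max_{i,j}\|\bold x_n^i-\bold x_n^j\|_\infty=\max_{1\leq l\leq d}\mathcal D(\mathfrak x_n^l)$. Your only deviations are cosmetic — applying the SLLN once to the combined increments $Q_s$ rather than separately to the $\mathcal G$- and $\mathcal H$-sums, and controlling the boundary factor by $Y_k/k\to0$ instead of extending the $\mathcal H$-sum to $k+1$ terms as the paper does.
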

\begin{proof}
(i)~%We choose $m\in\mathbb N$ as in the proof of Theorem \ref{T4.1} so that the relation \eqref{C-10} holds.  Then, w
We apply the inequality $1+x\leq e^x$ to Lemma \ref{L4.4} to get
\begin{align}\label{D-18}
\begin{aligned}
&\mathcal D(\mathfrak x_n^l) \leq {\mathcal D}(\mathfrak x_{0}^l) \left( 1+{\mathcal H}_{[km_0, n)} \right)
\exp\bigg[ - \gamma  (1-\gamma)^{m_0-1 }\sum_{s=1}^k{\mathcal G}_{[(s-1)m_0, sm_0)}
+ \sum_{s=1}^{k} {\mathcal H}_{[(s-1)m_0, sm_0)} \bigg]\\
&\leq{\mathcal D}(\mathfrak x_{0}^l)\exp\bigg[ - \gamma  (1-\gamma)^{m_0-1 }\sum_{s=1}^k{\mathcal G}_{[(s-1)m_0, sm_0)}
+ \sum_{s=1}^{k+1} {\mathcal H}_{[(s-1)m_0, sm_0)} \bigg]\\
&={\mathcal D}(\mathfrak x_{0}^l)\exp\Bigg[ - \gamma (1-\gamma)^{m_0-1 }\Bigg(\frac{1}{k}\sum_{s=1}^k{\mathcal G}_{[(s-1)m_0, sm_0)} \Bigg)\cdot\frac{k}{n}\cdot n \\
&\hspace{4cm}+ \Bigg(\frac{1}{k+1}\sum_{s=1}^{k+1} {\mathcal H}_{[(s-1)m_0, sm_0)}\Bigg)\cdot\frac{k+1}{n}\cdot n  \Bigg].
\end{aligned}
\end{align}
As in Theorem \ref{T4.1}, one may consider 
\[ k=k(n)=\Big\lfloor\frac{n}{m_0}\Big\rfloor. \]
Then, its limiting behavior is :
\begin{equation}\label{D-19}
\lim_{n\to\infty}k=\infty\quad\mbox{and}\quad\lim_{n\to\infty}\frac{k}{n}=\frac{1}{m_0}.
\end{equation}
Hence, one can apply the strong law of large numbers to get
\begin{equation}\label{D-20}
\lim_{n\to\infty}\frac{1}{k}\sum_{s=1}^k{\mathcal G}_{[(s-1)m_0, sm_0)}=\mathbb E\left[{\mathcal G}_{[0, m_0)}\right] = p_{m_0}\quad\mbox{a.s.}
\end{equation}
and
\begin{equation}\label{D-21}
\begin{aligned}
&\lim_{n\to\infty}\frac{1}{k+1}\sum_{s=1}^{k+1} {\mathcal H}_{[(s-1)m_0, sm_0)}  \\
& \hspace{1cm} =\lim_{n\to\infty}\frac{1}{k+1}\sum_{s=1}^{k+1} 2\left[\left(1 +2\|H_{s m_0-1}^l\|_{1,\infty}\right)\cdots\left( 1+2\|H_{(s-1)m_0}^l\|_{1,\infty} \right)-1\right]  \\
& \hspace{1cm} = 2[(1+2\mathbb E\|H_0^l\|_{1,\infty})^{m_0}-1]   \quad \mbox{a.s.}
\end{aligned}
\end{equation}
Finally, combining \eqref{D-18}, \eqref{D-19}, \eqref{D-20} and \eqref{D-21} gives
\[
\mathcal D(\mathfrak x_n^l)\leq \exp\left(-\widetilde \Lambda_n\cdot n\right)\mathcal D(\mathfrak x_{0}^l),\quad n\geq1,
\]
where the random process $(\widetilde \Lambda_n)_{n\geq1}$ is defined by
\begin{equation*}
\begin{aligned}
\widetilde \Lambda_n := - \gamma (1-\gamma)^{m_0-1 }\Bigg(\frac{1}{k}\sum_{s=1}^k{\mathcal G}_{[(s-1)m_0, sm_0)} \Bigg)\cdot\frac{k}{n}
+ \Bigg(\frac{1}{k+1}\sum_{s=1}^{k+1} {\mathcal H}_{[(s-1)m_0, sm_0)}\Bigg)\cdot\frac{k+1}{n},
\end{aligned}
\end{equation*}
%where $\widetilde \Lambda_n$
and satisfies
\begin{equation*}
\begin{aligned}
\lim_{n\to\infty}\widetilde \Lambda_n
&=\frac{\gamma (1-\gamma)^{m_0-1 }}{m_0} \mathbb E\left[{\mathcal G}_{[(s-1)m_0, sm_0)}\right]-\frac{2[(1+2\mathbb E\|H_0^l\|_{1,\infty})^{m_0}-1]}{m_0}\\
&\geq \frac{\gamma (1-\gamma)^{m_0-1 }p_{m_0}}{m_0}  -\frac{2[(1+2\sqrt{N}\zeta)^{m_0}-1]}{m_0}.
\end{aligned}
\end{equation*}
Thus, if $\zeta$ is sufficiently small,
\[ \lim_{n\to\infty}\widetilde \Lambda_n > 0. \]
This implies that, if one chooses a positive constant $\Lambda_2= \Lambda_2(\gamma,N,m_0,\zeta)$ such that 
\begin{equation}\label{D-22-0}
0 < \Lambda_2 < \frac{\gamma (1-\gamma)^{m_0-1 }p_{m_0}}{m_0}  -\frac{2[(1+2\sqrt{N}\zeta)^{m_0}-1]}{m_0},
\end{equation}
there exists a stopping time $T>0$ such that
\[ \mathcal D(\mathfrak x_n^l)\leq \exp\left(-\Lambda_2 n\right)\mathcal D(\mathfrak x_{0}^l), \quad n \geq T. \]
In other words, there exists an almost surely positive and bounded random variable 
\[ C_2^l(\omega) := \max_{0\leq n < T}\{ \exp\left(\Lambda_2 n\right)\mathcal D(\mathfrak x_n^l) \}, \] 
which satisfies
\begin{equation} \label{D-22}
\mathcal D(\mathfrak x_n^l)\leq C^l_2(\omega) \exp\left(-\Lambda_2 n\right), \quad n \geq 0.
\end{equation}
\newline

\noindent (ii)~In the course of proof of Theorem \ref{T3.1}, we had
\begin{equation} \label{D-23}
\max_{i,j} \| {\bold x}^i_n - {\bold x}^j_n \|_\infty = \max_{1 \leq l \leq d}  {\mathcal D}({\mathfrak x}_n^l) \quad \mbox{a.s.}
\end{equation}
Combining \eqref{D-22} and \eqref{D-23} gives
\[
\max_{i,j}\| {\bold x}^i_n- {\bold x}^j_n\|_\infty = \max_{1\leq l \leq d} {\mathcal D}(\mathfrak x_{n}^l) \leq 
\Big( \max_{1 \leq l \leq d} C^l_2(\omega)   \Big) \exp(-\Lambda_2 n) =: C_2(\omega) \exp(-\Lambda_2 n), \quad \mbox{a.s.} 
\]
\end{proof}
\section{Convergence of discrete CBO flow} \label{sec:5}
\setcounter{equation}{0}
In this section, we present the convergence analysis of the CBO algorithm \eqref{A-3} using the stochastic consensus estimates in Theorems \ref{T4.1} and \ref{T4.2}. In those theorems, it turns out that consensus of \eqref{A-3}, i.e., decay of the relative state
$\|{\bold x}^i - {\bold x}^j \|$,  is shown to be exponentially fast in expectation and in almost sure sense. %, i.e.,  decays to zero in expectation and almost sure sense. 
However, this does not guarantee that a solution converges toward an equilibrium of the discrete model \eqref{A-3}, 
since it may approach a limit cycle or exhibit chaotic trajectory even if the relative states decay to zero. The convergence of the states $\mathfrak x_n^i$ to a common point is an important issue for the purpose of global optimization of the CBO algorithm. %This issue will be treated in what follows.  

\begin{lemma}\label{L5.1}
\emph{(almost sure convergence)}
Let ${\{ \bold x}^i_{n}\}$ be a solution process to \eqref{A-3}. Then, there exists $\zeta_2>0$ independent of the dimension $d$, such that if $0\leq\zeta<\zeta_2$,  there exists a random variable ${\bold x}_\infty$ such that
the solution $\{{\bold x}^i_n\}$ of \eqref{A-3} converges to ${\bold x}_\infty$ almost surely:
\[
\lim_{n\to\infty}{\bold x}_n^i= {\bold x}_\infty\quad\mbox{a.s.,}\quad i \in {\mathcal N}.
\]
\end{lemma}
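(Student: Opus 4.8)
The plan is to start from the summed form of the dynamics \eqref{A-7}, which for the $l$-th coordinate of agent $i$ reads
\[
x_n^{i,l} = x_0^{i,l} - \gamma\sum_{k=0}^{n-1}\bigl(x_k^{i,l}-\bar x_k^{[i]_k,*,l}\bigr) - \sum_{k=0}^{n-1}\eta_k^{i,l}\bigl(x_k^{i,l}-\bar x_k^{[i]_k,*,l}\bigr),
\]
and to prove that each of the two series on the right converges almost surely as $n\to\infty$. The recurring estimate is that, since $\bar x_k^{[i]_k,*,l}$ is a convex combination of $\{x_k^{j,l}:j\in[i]_k\}$ by \eqref{A-2}, one has $|x_k^{i,l}-\bar x_k^{[i]_k,*,l}|\le\mathcal D(\mathfrak x_k^l)$. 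Throughout I take $0\le\zeta<\zeta_2$ so that the almost sure consensus of Theorem \ref{T4.2} is available.

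First I would dispose of the drift series $\sum_k(x_k^{i,l}-\bar x_k^{[i]_k,*,l})$. By Theorem \ref{T4.2} there is an almost surely finite random variable $C_2^l(\omega)$ with $\mathcal D(\mathfrak x_k^l)\le C_2^l(\omega)e^{-\Lambda_2 k}$, so $\sum_{k=0}^\infty|x_k^{i,l}-\bar x_k^{[i]_k,*,l}|\le C_2^l(\omega)\sum_{k=0}^\infty e^{-\Lambda_2 k}<\infty$ almost surely, giving absolute convergence of this series a.s.

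The hard part is the noise series, since the factors $\eta_k^{i,l}$ are not uniformly bounded; this is where Doob's martingale convergence theorem enters. Set $M_n:=\sum_{k=0}^{n-1}\eta_k^{i,l}(x_k^{i,l}-\bar x_k^{[i]_k,*,l})$ and choose the filtration $\mathcal F_n:=\sigma\bigl(\mathcal X_0,\{\eta_s^{i,l}\}_{s<n},\{\mathcal B^s\}_{s\le n}\bigr)$, which includes the batch selection at time $n$ but excludes the noise at time $n$. Then the coefficient $x_n^{i,l}-\bar x_n^{[i]_n,*,l}$ is $\mathcal F_n$-measurable, as it depends only on $X_n$ and on $\mathcal B^n$, whereas $\eta_n^{i,l}$ is independent of $\mathcal F_n$ with zero mean; hence $\mathbb E[M_{n+1}-M_n\mid\mathcal F_n]=0$ and $\{M_n\}$ is an $\{\mathcal F_n\}$-martingale. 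Its conditional quadratic variation obeys
\[
\mathbb E\bigl[(M_{n+1}-M_n)^2\mid\mathcal F_n\bigr]=(x_n^{i,l}-\bar x_n^{[i]_n,*,l})^2\,\mathbb E[(\eta_n^{i,l})^2]\le\zeta^2\,\mathcal D(\mathfrak x_n^l)^2,
\]
and invoking Theorem \ref{T4.2} again gives $\sum_{n=0}^\infty\mathbb E[(M_{n+1}-M_n)^2\mid\mathcal F_n]\le\zeta^2\sum_{n=0}^\infty (C_2^l(\omega))^2e^{-2\Lambda_2 n}<\infty$ almost surely. By the quadratic-variation criterion of the martingale convergence theorem, $M_n$ converges a.s.

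Finally I would combine the two convergences: each $x_n^{i,l}$ converges a.s. to some limit $x_\infty^{i,l}$, and since $\mathcal D(\mathfrak x_n^l)=\max_i x_n^{i,l}-\min_i x_n^{i,l}\to0$ a.s. by Theorem \ref{T4.2}, all these per-agent limits must coincide, producing a common value $x_\infty^l$ for every $i\in\mathcal N$. Setting $\bold x_\infty:=(x_\infty^1,\dots,x_\infty^d)$ then yields $\lim_{n\to\infty}\bold x_n^i=\bold x_\infty$ a.s. for all $i$. The principal obstacle is precisely the unboundedness of the external noise in the second series; the resolution is the martingale structure arising from the carefully staggered filtration (batch at time $n$ in, noise at time $n$ out), combined with the almost sure summability of $\mathcal D(\mathfrak x_n^l)^2$ supplied by the exponential consensus of Theorem \ref{T4.2}.
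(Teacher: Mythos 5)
Your proof is correct and follows the same skeleton as the paper's own argument: the same summed decomposition into a drift series and a noise series, the same convex-combination bound $|x_k^{i,l}-\bar x_k^{[i]_k,*,l}|\le \mathcal D(\mathfrak x_k^l)$, Theorem \ref{T4.2} for the absolute convergence of the drift series, and a martingale argument for the noise series. The one genuine difference is the martingale convergence criterion, and your choice is arguably the more robust one. The paper invokes Doob's theorem for martingales uniformly bounded in $L^2(\Omega)$, which forces it to claim $\sum_k \mathbb E\,|\mathcal D(\mathfrak x_k^l)|^2<\infty$; it cites Theorem \ref{T4.2} for this, but that theorem is a purely pathwise statement ($\mathcal D(\mathfrak x_n^l)\le C_2^l(\omega)e^{-\Lambda_2 n}$ with $C_2^l(\omega)$ only a.s.\ finite, with no stated moment bounds), so the second-moment summability would really require an $L^2$ analogue of Theorem \ref{T4.1} that the paper never proves. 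Your route --- the localized criterion that a square-integrable martingale converges a.s.\ on the event $\{\sum_n\mathbb E[(M_{n+1}-M_n)^2\mid\mathcal F_n]<\infty\}$ --- needs only the pathwise bound of Theorem \ref{T4.2} and sidesteps this moment issue entirely. You also make the adaptedness structure explicit via the staggered filtration, where the paper is looser: it asserts that the increments $H_k^l(I_N-W_k)\mathfrak x_k^l$ are independent, which is false as stated (both $W_k$ and $\mathfrak x_k^l$ depend on the past through $X_k$); only the martingale-difference property holds, which is what both proofs actually use. Two small repairs to your write-up: first, the filtration must contain \emph{all} noises up to time $n-1$, i.e.\ $\mathcal F_n=\sigma\bigl(\mathcal X_0,\{\mathcal B^s\}_{s\le n},\{\eta_s^{j,l'}: s<n,\ j\in\mathcal N,\ 1\le l'\le d\}\bigr)$, since $X_n$ --- and hence the weights $\omega_{[i]_n,j}(X_n)$ and $\bar x_n^{[i]_n,*,l}$ --- depends on every agent's and every coordinate's past noise, not only on $\{\eta_s^{i,l}\}_{s<n}$; second, to apply the localized convergence theorem you should note that each increment lies in $L^2$, which holds for every fixed $n$ (given $\mathbb E\,\mathcal D(\mathfrak x_0^l)^2<\infty$, an assumption implicit in the paper as well) because $\eta_n^{i,l}$ is independent of the $\mathcal F_n$-measurable coefficient. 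Your final identification of a common limit across agents via $\mathcal D(\mathfrak x_n^l)\to0$ is also slightly more careful than the paper, which concludes convergence of $\mathfrak x_n^l$ without spelling out why the per-agent limits coincide.
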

\begin{proof}
We will use almost sure consensus of the states ${\bold x}_n^i$ ($i=1,\dots,N$) in Theorem \ref{T4.2} and similar arguments of Theorem 3.1 in \cite{H-J-Kd}.  Similar to \eqref{C-2}, we may rewrite \eqref{A-3} as
\begin{align}\label{E-1}
\begin{aligned}
\mathfrak x_{n+1}^l- \mathfrak x_{n}^l
&=-\gamma\left( I_N- W_n\right) \mathfrak x_n^l-H_n^l(I_N-W_n)\mathfrak x_n^l,\quad 1\leq l\leq d,~ n\geq0,
\end{aligned}
\end{align}
where the convergence of ${\bold x}^i_n$ for all $i=1,\ldots,N$ is equivalent to that of $\mathfrak x_{n}^l$ for all $l=1,\ldots,d$. Summing up \eqref{E-1} over $n$ gives
\begin{equation}\label{E-2}
\mathfrak x_{n}^l-\mathfrak x_{0}^l =-\gamma \underbrace{\sum_{k=0}^{n-1}\left( I_N-  W_k\right)\mathfrak x_k^l}_{=:{\mathcal J}_1}  -\underbrace{\sum_{k=0}^{n-1}H_k^l(I_N-W_k)\mathfrak x_k^l}_{=: {\mathcal J}_2},\quad 1\leq l\leq d,~ n\geq0.
\end{equation}
Note that the convergence of $\mathfrak x_n^l$ follows if the two vector series in the R.H.S. of \eqref{E-2} are both convergent almost surely as $n\to\infty$.  \newline

\noindent $\bullet$~Case A (Almost sure convergence of ${\mathcal J}_1$): For the $i$-th standard unit vector $\bold e_i$ in $\mathbb R^N$, one has
\begin{align}\label{E-3}
\begin{aligned}
|\bold e_i^\top\left(I_N - W_k\right)\mathfrak x_k^l| &= \left|x_k^{i,l}-\sum_{j=1}^N \omega_{[i]_k,j}(X_k^1,\dots,X_k^N)x_k^{j,l}\right|\\
&\leq \sum_{j=1}^N \omega_{[i]_k,j}(X_k^1,\dots,X_k^N) \left|x_k^{i,l}- x_k^{j,l}\right | \leq \max_{1\leq j\leq N}\left|x_k^{i,l}-x_k^{j,l}\right| = \mathcal D(\mathfrak x_k^{l}).
\end{aligned}
\end{align}
where we used the fact that $ \sum_j \omega_{[i]_k,j} = 1$.  It follows from Theorem \ref{T4.2} that the diameter process $\mathcal D(\mathfrak x_k^{l})$ decays to zero exponentially fast almost surely. Hence, for each $i=1,\dots,N$, the relation \eqref{E-3} implies
\[
\sum_{k=0}^{\infty}|\bold e_i^\top\left(I_N -  W_k\right)\mathfrak x_k^l| \leq\sum_{k=0}^{\infty}\mathcal D(\mathfrak x_k^{l})<\infty,\quad\mbox{a.s.}
\]
This shows that each component of the first series in \eqref{E-2} is convergent almost surely. \newline

\noindent $\bullet$~Case B (Almost sure convergence of ${\mathcal J}_2$):~Note that this series is martingale since the random vectors $\{H_k^l(I_N-W_k)\mathfrak x_k^l \}_{k\geq0}$ are independent and have zero mean: for any $\sigma$-field $\mathcal F$ independent of $H^l_k$, we have
\begin{equation*} \label{E-3-1}
{\mathbb E}[~ H_k^l(I_N-W_k)\mathfrak x_k^l ~|~ \mathcal F ~] =  {\mathbb E}[~ H_k^l(I_N-W_k)\mathfrak x_k^l ]=  \Big( {\mathbb E} H_k^l \Big) \cdot \Big(  {\mathbb E} (I_N-W_k) \Big) \cdot 
\Big(  {\mathbb E} \mathfrak x_k^l \Big) = 0,
\end{equation*} 
where we used the fact that ${\mathbb E} H_k^l  = 0$. 
By Doob's martingale convergence theorem \cite{Du}, the martingale converges almost surely if the series is uniformly bounded in $L^2(\Omega)$. Again, it follows from \eqref{E-3} and Theorem \ref{T4.2} that
\begin{align*}
\begin{aligned}
&\sup_{n\geq0}\mathbb E\left|\sum_{k=0}^{n}\bold e_i^\top H_k^l(I_N-W_k)\mathfrak x_k^l\right|^2  =\sup_{n\geq0}\mathbb E\left|\sum_{k=0}^{n}\eta_k^{i,l}\bold e_i^\top (I_N-W_k)\mathfrak x_k^l\right|^2 \\
& \hspace{1cm} \leq \sup_{n\geq0}\mathbb E\sum_{k=0}^{n}\left|\eta_k^{i,l}\bold e_i^\top (I_N-W_k)\mathfrak x_k^l\right|^2 =\sup_{n\geq0}\sum_{k=0}^{n}{\mathbb E}|\eta_k^{i,l}|^2\mathbb E\left|\bold e_i^\top (I_N-W_k)\mathfrak x_k^l\right|^2
\\
&  \hspace{1cm} \leq\zeta^2\sum_{k=0}^{\infty}\mathbb E\left|\bold e_i^\top (I_N-W_k)\mathfrak x_k^l\right|^2 \leq\zeta^2\sum_{k=0}^{\infty}\mathbb E|\mathcal D(\mathfrak x_k^{l})|^2
<\infty.
\end{aligned}
\end{align*}
This yields that the second series in \eqref{E-3} converges almost surely.

 In \eqref{E-3}, we combine results from Case A and Case B to conclude that $\mathfrak x^l_n$ converges to a random variable almost surely.
\end{proof}

\begin{lemma}\label{L5.2}
Let $\{Y_k\}_{k\geq0}$ be an i.i.d. sequence of random variables with $\mathbb E|Y_0|<\infty$. Then,  for any given $\delta>0$,  there exists a random variable $\bar C(\omega)$ such that
\[
|Y_k|< \bar C(\omega)e^{\delta k},\quad\forall~ k\geq0, \quad \mbox{a.s.}
\]
\end{lemma}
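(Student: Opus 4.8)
The plan is to reduce the desired almost-sure bound to a single application of the first Borel--Cantelli lemma. Fixing $\delta>0$, I would first argue that, almost surely, the ``bad events'' $\{|Y_k|\geq e^{\delta k}\}$ occur for only finitely many $k$. Granting this, the random variable
\[ \bar C(\omega):=1+\sup_{k\geq0}|Y_k|e^{-\delta k} \]
is finite almost surely: the sequence $a_k:=|Y_k|e^{-\delta k}$ satisfies $a_k<1$ for all $k$ beyond some (finite) random index $K(\omega)$, so the supremum is the maximum of the finitely many terms $a_0,\dots,a_{K(\omega)}$ and a tail bounded by $1$. Since $a_k<\bar C(\omega)$ for every $k$ by construction, this $\bar C(\omega)$ immediately yields the strict inequality $|Y_k|<\bar C(\omega)e^{\delta k}$ claimed in the statement.

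To verify that the bad events occur finitely often, I would check the summability hypothesis of Borel--Cantelli, namely
\[ \sum_{k=0}^\infty \mathbb P\big(|Y_k|\geq e^{\delta k}\big)<\infty. \]
Because the $Y_k$ are identically distributed, each summand equals $\mathbb P(|Y_0|\geq e^{\delta k})$, so the task collapses to proving $\sum_{k\geq0}\mathbb P(|Y_0|\geq e^{\delta k})<\infty$ using only the first-moment assumption $\mathbb E|Y_0|<\infty$. Note that independence of the $Y_k$ is not actually needed for this half of the argument; only identical distribution is used.

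The heart of the matter is a geometric layer-cake comparison. Writing $c:=e^{\delta}>1$ and starting from $\mathbb E|Y_0|=\int_0^\infty \mathbb P(|Y_0|>t)\,dt$, I would split the half-line into the geometric blocks $[c^k,c^{k+1})$ and use that the tail $t\mapsto\mathbb P(|Y_0|>t)$ is non-increasing, so that $\mathbb P(|Y_0|>t)\geq\mathbb P(|Y_0|>c^{k+1})$ for $t\in[c^k,c^{k+1})$. This gives
\[ \mathbb E|Y_0|\geq\sum_{k=0}^\infty (c^{k+1}-c^k)\,\mathbb P(|Y_0|>c^{k+1})=(c-1)\sum_{k=0}^\infty c^k\,\mathbb P(|Y_0|>c^{k+1}). \]
Since $c^k\geq1$, finiteness of the left-hand side forces $\sum_{k\geq0}\mathbb P(|Y_0|>c^{k+1})<\infty$; adding the single bounded term for $k=0$ and using $\{|Y_0|\geq c^{j}\}\subseteq\{|Y_0|>c^{j-1}\}$ then yields $\sum_{j\geq0}\mathbb P(|Y_0|\geq c^{j})<\infty$, which is exactly the required convergence.

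I do not anticipate any genuine obstacle: the argument is elementary and the only points demanding a little care are applying the monotonicity of the tail in the correct direction within each geometric block, and the harmless passage between the events $\{\geq\}$ and $\{>\}$, which differ only on atoms and are immaterial for summability. Thus the sole ``technical'' step is the geometric-level comparison above, and everything else is bookkeeping around Borel--Cantelli.
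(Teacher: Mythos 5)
Your proof is correct, and it takes a recognizably different route from the paper's. The paper applies Markov's inequality at a whole family of levels, $\mathbb P(|Y_k|\geq n e^{\delta k})\leq \mathbb E|Y_0|/(n e^{\delta k})$, sums the resulting geometric series in a union bound to get $\mathbb P(|Y_k|< n e^{\delta k},\ \forall k)\geq 1-\mathbb E|Y_0|/(n(1-e^{-\delta}))$, and then lets the integer level $n\to\infty$; the random variable $\bar C$ is built as essentially the least admissible level, via a sum of indicators. You instead fix the unit level, prove $\sum_{k}\mathbb P(|Y_0|\geq e^{\delta k})<\infty$ by a geometric layer-cake comparison, invoke the first Borel--Cantelli lemma, and set $\bar C(\omega)=1+\sup_{k}|Y_k|e^{-\delta k}$, which is a.s.\ finite since only finitely many terms exceed $1$; all the individual steps (the monotonicity of the tail on each block $[c^k,c^{k+1})$, the passage between $\{\geq\}$ and $\{>\}$, the strictness of the final inequality) check out. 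Both arguments use only identical distribution, never independence, as you correctly observe. Two comparative remarks: (i) your layer-cake step is heavier than needed here, since with geometric thresholds a single application of Markov's inequality, $\mathbb P(|Y_0|\geq e^{\delta k})\leq \mathbb E|Y_0|e^{-\delta k}$, gives summability in one line; on the other hand, the layer-cake is exactly the argument that would survive slower (e.g., polynomial) thresholds where plain Markov fails, so your version is more robust. (ii) The paper's parametrized union bound buys a quantitative dividend your qualitative route does not: an explicit tail estimate $\mathbb P(\bar C>n)\leq \mathbb E|Y_0|/(n(1-e^{-\delta}))$ on the random constant, whereas Borel--Cantelli certifies only a.s.\ finiteness. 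Aside from the harmless convention of redefining $\bar C$ on the null set where the supremum is infinite, your proof is complete.
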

\begin{proof}
By Markov's inequality, for any constant $n\geq1$ one has
\[
\mathbb P(|Y_k|\geq ne^{\delta k})\leq \frac{\mathbb E|Y_k|}{ne^{\delta k}},\quad\forall~k\geq0.
\]
In particular, 
\begin{align*}
\begin{aligned} 
\mathbb P(|Y_k|< ne^{\delta k},~\forall~ k\geq0)&= 1- \mathbb P(|Y_k|\geq ne^{\delta k},~\exists k\geq0)
\geq 1- \sum_{k=0}^N \mathbb P(|Y_k|\geq ne^{\delta k})
%&\geq 1- \frac{\mathbb E|Y_k|}{C}\sum_{k=0}^N e^{-\delta k} \\
\geq 1- \frac{\mathbb E|Y_k|}{n(1-e^{-\delta})}.
\end{aligned}
\end{align*}
Therefore, for any $\delta>0$, 
\begin{equation*}\label{E-3-2}
\mathbb P(\exists~n\geq1~\mbox{s.t.}~\mbox{for}~\forall~ k\geq0,~|Y_k|< ne^{\delta k})=\lim_{n \to \infty} \mathbb P(|Y_k|< ne^{\delta k},~\forall~ k\geq0) = 1.
\end{equation*}
This implies existence of a random variable $\bar C(\omega)$ satisfying
\[
|Y_k|< \bar C(\omega)e^{\delta k},\quad\forall~ k\geq0, \quad \mbox{a.s.}
\]
In details, we may define the random variable $\bar C(\omega)$ as
\[ \bar C(\omega) := \sum_{n=1}^\infty \mathbf{1}_{\bar C_n}, \quad \bar C_n :=\Omega \setminus \left\{\omega \in \Omega ~:~ |Y_k(\omega)| < ne^{\delta k},~\forall~ k\geq0 \right\}, \quad n \geq 1. \]
%From \eqref{E-3-2}, the probability of $\bar C_n$ decays to zero and then the summation is bounded for almost sure $\omega \in \Omega$, which shows that $\bar C(\omega)$ is almost surely bounded. 
For example, if $\omega$ satisfies $|Y_k(\omega)| < m  e^{\delta k},~\forall~ k\geq0$ for some $m$ but not for $(m-1)$, then $\omega \in \bar C_1,\bar C_2,\dots,\bar C_{m-1}$ and $\omega \notin \bar C_m,\bar C_{m+1},\dots$ so that $\bar C(\omega) = m$.
\end{proof}

Now, we are ready to present the convergence of $\bold x^i_n$ in expectation and almost sure sense. 
\begin{theorem}\label{T5.1}
Let ${\{ \bold x}^i_{n}\}$ be a solution process to \eqref{A-3}, and let $m_0$ be a positive constant defined in Lemma \ref{L3.5}. Then, there exists $\zeta_3>0$ independent of the dimension $d$, such that if $0\leq\zeta<\zeta_3$,  then there exists some random variable $\bold x_\infty$ such that ${\bold x}^i_n$ exponentially converges to ${\bold x}_\infty$ in the following senses:
\begin{enumerate}
\item (Convergence in expectation): for positive constants $C_1$ and $\Lambda_1$ appearing in Theorem \ref{T4.1}, one has
\[ {\mathbb E} \| {\bold x}_{n}^{i}- {\bold x}_{\infty}^{i} \|_{1}  \leq  \frac{d(\gamma + \sqrt{N} \zeta)  C_1 \mathbb E\mathcal D(\mathfrak x_{0}^l)}{1-e^{-\Lambda_1}} e^{-\Lambda_1 n}, \quad n \geq 0,~~ i \in {\mathcal N}. \]
\item (Almost sure convergence): for a positive constant $\Lambda_2$ appearing in Theorem \ref{T4.2}, there exists a positive random variable $C_3(\omega)>0$ such that 
\[
\| \bold x_n - \bold x_\infty^i \|_1 \leq C_3(\omega) e^{-\Lambda_2 n} \quad \mbox{a.s.} \quad n \geq 0,~~ i \in {\mathcal N}.
\]
\end{enumerate}
\end{theorem}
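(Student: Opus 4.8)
The plan is to express the gap $\bold x_\infty^i - \bold x_n^i$ as the tail of the telescoping series already analyzed in Lemma \ref{L5.1}, and then bound this tail term by term. Since Lemma \ref{L5.1} guarantees $\bold x_m^i \to \bold x_\infty$ almost surely as $m\to\infty$, summing the one-step relation \eqref{E-1} from $k=n$ to $k=m-1$ and letting $m\to\infty$ gives, for each component $l$,
\[
x_\infty^{i,l} - x_n^{i,l} = -\gamma\sum_{k=n}^{\infty}\bold e_i^\top(I_N - W_k)\mathfrak x_k^l - \sum_{k=n}^{\infty}\eta_k^{i,l}\,\bold e_i^\top(I_N - W_k)\mathfrak x_k^l,
\]
both tails converging a.s. by Cases A and B of Lemma \ref{L5.1}. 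Invoking the elementary bound $|\bold e_i^\top(I_N - W_k)\mathfrak x_k^l| \leq \mathcal D(\mathfrak x_k^l)$ from \eqref{E-3}, I obtain the pointwise tail estimate
\[
|x_\infty^{i,l} - x_n^{i,l}| \leq \sum_{k=n}^{\infty}(\gamma + |\eta_k^{i,l}|)\,\mathcal D(\mathfrak x_k^l),
\]
which is the common starting point for both assertions; it remains to control this series with the exponential consensus bounds of Theorem \ref{T4.1} and Theorem \ref{T4.2}.

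For the convergence in expectation (assertion (1)), I would take expectations term by term. The key structural observation is that $\mathfrak x_k^l$, hence $\mathcal D(\mathfrak x_k^l)$, is determined by the initial data together with the noises $\{\eta_r\}_{r<k}$ and batches $\{\mathcal B^r\}_{r<k}$, so $\eta_k^{i,l}$ is independent of $\mathcal D(\mathfrak x_k^l)$. Using this independence, the bound $\mathbb E|\eta_k^{i,l}| \leq \sqrt{\mathbb E|\eta_k^{i,l}|^2} \leq \zeta \leq \sqrt N\zeta$, and $\mathbb E\mathcal D(\mathfrak x_k^l) \leq C_1\mathbb E\mathcal D(\mathfrak x_0^l)e^{-\Lambda_1 k}$ from Theorem \ref{T4.1}, the geometric series sums to
\[
\mathbb E|x_\infty^{i,l} - x_n^{i,l}| \leq (\gamma + \sqrt N\zeta)\,C_1\mathbb E\mathcal D(\mathfrak x_0^l)\sum_{k=n}^{\infty}e^{-\Lambda_1 k} = \frac{(\gamma + \sqrt N\zeta)C_1\mathbb E\mathcal D(\mathfrak x_0^l)}{1 - e^{-\Lambda_1}}e^{-\Lambda_1 n}.
\]
Summing over $l=1,\dots,d$ to pass from components to the $\ell^1$-norm produces the stated factor $d$ and completes assertion (1).

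For the almost sure convergence (assertion (2)), the main obstacle is the one flagged in the introduction: the noises $\eta_k^{i,l}$ are not uniformly bounded in $\omega$, so one cannot pull a deterministic constant out of the series. This is resolved by Lemma \ref{L5.2}. I would fix the target rate $\Lambda_2$ and pick $\delta>0$ small enough that $\Lambda_2+\delta$ still lies below the consensus threshold \eqref{D-22-0}, which is possible after shrinking $\zeta_3$. Applying Theorem \ref{T4.2} at this faster rate gives $\mathcal D(\mathfrak x_k^l) \leq C_2^l(\omega)e^{-(\Lambda_2+\delta)k}$ a.s., while Lemma \ref{L5.2} applied to the i.i.d. sequence $\{|\eta_k^{i,l}|\}_k$ yields a random $\bar C(\omega)$ with $|\eta_k^{i,l}| < \bar C(\omega)e^{\delta k}$ for all $k$. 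The troublesome growth factor $e^{\delta k}$ is then cancelled exactly, leaving a summable tail dominated by $e^{-\Lambda_2 k}$:
\[
\sum_{k=n}^{\infty}(\gamma + |\eta_k^{i,l}|)\mathcal D(\mathfrak x_k^l) \leq C_2^l(\omega)\sum_{k=n}^{\infty}\big(\gamma e^{-(\Lambda_2+\delta)k} + \bar C(\omega)e^{-\Lambda_2 k}\big) \leq \widetilde C^l(\omega)e^{-\Lambda_2 n}.
\]
Absorbing the almost surely finite random constants $C_2^l(\omega)$ and $\bar C(\omega)$ into a single $C_3(\omega)$ and summing over $l$ gives the claimed bound $\|\bold x_n^i - \bold x_\infty^i\|_1 \leq C_3(\omega)e^{-\Lambda_2 n}$. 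I expect the delicate point to be the bookkeeping in assertion (2): one must select $\delta$ \emph{before} invoking Theorem \ref{T4.2} so that the consensus rate exceeds the target by precisely $\delta$, and one must verify that the random constants are a.s. finite, which they are by construction in Theorem \ref{T4.2} and Lemma \ref{L5.2}. The remaining manipulations are routine geometric-series summations.
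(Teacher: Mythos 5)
Your proposal is correct and follows essentially the same route as the paper's own proof: both rest on summing \eqref{E-1}, the componentwise bound $|x_{M}^{i,l}-x_{n}^{i,l}|\leq\sum_{k=n}^{M-1}(\gamma+|\eta_k^{i,l}|)\,\mathcal D(\mathfrak x_k^l)$, Theorem \ref{T4.1} together with the noise-moment bound for part (1), and the same slack-parameter device for part (2) (your $\delta$ is the paper's $\varepsilon$), combining Theorem \ref{T4.2} at the faster rate $\Lambda_2+\varepsilon$ with Lemma \ref{L5.2} to cancel the $e^{\varepsilon k}$ growth of the noise. The only cosmetic difference is that you pass immediately to the infinite tail series (justified by Lemma \ref{L5.1}, with Tonelli legitimizing the term-by-term expectation of the nonnegative majorant), whereas the paper bounds finite partial sums up to $M$ and sends $M\to\infty$ at the end.
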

\begin{proof} Let $m_0$ be a positive integer appearing in Theorem \ref{T4.1}. Then, for $n \geq 0$, there exists $\zeta_1>0$ independent of the dimension $d$, such that if $0\leq\zeta<\zeta_1$, there exists some positive constants $C_1 = C_1(N, m_0, \zeta)$ and $\Lambda_1=\Lambda_1(\gamma,N,m_0,\zeta)$ such that 
\begin{equation} \label{E-3-3}
\mathbb E\mathcal D(\mathfrak x_n^l)\leq  C_1 \mathbb E\mathcal D(\mathfrak x_{0}^l)\exp\left(-\Lambda_1 n\right),\quad n \geq 0.
\end{equation}
Let $M \geq n$. Then, summing up \eqref{E-1} over $n$ gives
\begin{align*}
\begin{aligned}
\mathfrak x_{M}^l-\mathfrak x_{n}^l
&=-\gamma\sum_{k=n}^{M-1}\left( I_N-  W_k\right)\mathfrak x_k^l-\sum_{k=n}^{M-1}H_k^l(I_N-W_k)\mathfrak x_k^l,\quad 1\leq l\leq d,~ n\geq 0.
\end{aligned}
\end{align*}
As in the proof of Lemma \ref{L5.1}, for the $i$-th standard unit vector $\bold e_i$, one has
\begin{equation}\label{E-4}
\begin{aligned}
 |x_{M}^{i,l}-  x_{n}^{i,l}|&=|\bold e_i^\top(\mathfrak x_{M}^l-\mathfrak x_{n}^l)|
\leq\gamma\sum_{k=n}^{M-1}|\bold e_i^\top\left( I_N- W_k\right)\mathfrak x_k^l|+\sum_{k=n}^{M-1}|\eta_k^{i,l}||\bold e_i^\top  (I_N-W_k)\mathfrak x_k^l|\\
&\leq \sum_{k=n}^{M-1}(\gamma+| \eta_k^{i,l}|)\mathcal D(\mathfrak x_k^{l}),
\end{aligned}
\end{equation}
where the last inequality follows from $\eqref{E-3}$. \newline

\noindent (i) Taking expectations to \eqref{E-4} and use \eqref{E-3-3} gives
\begin{align*}
\begin{aligned} 
{\mathbb E}|x_{M}^{i,l}-  x_{n}^{i,l}|
&\leq \sum_{k=n}^{M-1} (\gamma + {\mathbb E}|\eta_k^{i,l}|)\mathbb E[\mathcal D(\mathfrak x_k^{l})]
\leq (\gamma + \sqrt{N} \zeta) C_1 \mathbb E\mathcal D(\mathfrak x_{0}^l) \sum_{k=n}^{M-1}e^{-\Lambda_1 k} \\
&\leq \frac{(\gamma + \sqrt{N} \zeta) C_1 \mathbb E\mathcal D(\mathfrak x_{0}^l) e^{-\Lambda_1 n}}{1-e^{-\Lambda_1}}.
\end{aligned}
\end{align*}
This yields
\begin{equation} \label{E-4-1}
{\mathbb E} \| {\bold x}_{M}^{i}- {\bold x}_{n}^{i} \|_\infty =\max_{1\leq l \leq d} {\mathbb E}|x_{M}^{i,l}-  x_{n}^{i,l}| \leq  \frac{(\gamma + \sqrt{N} \zeta)  C_1}{1-e^{-\Lambda_1}} \max_{1\leq l \leq d}\mathbb E\mathcal D(\mathfrak x_{0}^l) e^{-\Lambda_1 n}.
\end{equation}
In \eqref{E-4-1}, letting  $M \to\infty$  and using Lemma \ref{L5.1} give
\[ {\mathbb E} \| {\bold x}_{\infty}^{i}- {\bold x}_{n}^{i} \|_\infty  \leq  \frac{(\gamma + \sqrt{N} \zeta)  C_1}{1-e^{-\Lambda_1}} \max_{1\leq l \leq d}\mathbb E\mathcal D(\mathfrak x_{0}^l) e^{-\Lambda_1 n}, \quad n \geq 0. \]

%<<<<<<<<<<<<<\ell^1-norm
%This yields
%\begin{equation} \label{E-4-1}
%{\mathbb E} \| {\bold x}_{M}^{i}- {\bold x}_{n}^{i} \|_{1} =\sum_{l=1}^{d} {\mathbb E}|x_{M}^{i,l}-  x_{n}^{i,l}| \leq  \frac{d(\gamma + \sqrt{N} \zeta)  C_1 \mathbb E\mathcal D(\mathfrak x_{0}^l)}{1-e^{-\Lambda_1}} e^{-\Lambda_1 n}.
%\end{equation}
%In \eqref{E-4-1}, we let  $M \to\infty$  and use Lemma \ref{L5.1} to obtain
%\[ {\mathbb E} \| {\bold x}_{\infty}^{i}- {\bold x}_{n}^{i} \|_{1}  \leq  \frac{d(\gamma + \sqrt{N} \zeta)  C_1 \mathbb E\mathcal D(\mathfrak x_{0}^l)}{1-e^{-\Lambda_1}} e^{-\Lambda_1 n}, \quad n \geq m_0. \]

\vspace{0.5cm}

\noindent (ii)~
In the proof of Theorem \ref{T4.2}, the constant $\Lambda_2$ was chosen in \eqref{D-22-0} so that
\[ 0 < \Lambda_2 < \frac{\gamma (1-\gamma)^{m_0-1 }p_{m_0}}{m_0}  -\frac{2[(1+2\sqrt{N}\zeta)^{m_0}-1]}{m_0}. \]
Here one may fix $\Lambda_2$ and consider small $\varepsilon>0$ satisfying
\[ 0 < \Lambda_2 < \Lambda_2 + \varepsilon < \frac{\gamma (1-\gamma)^{m_0-1 }p_{m_0}}{m_0}  -\frac{2[(1+2\sqrt{N}\zeta)^{m_0}-1]}{m_0}. \]
Then, from Theorem \ref{T4.2} and Lemma \ref{L5.2}, the estimates of $\mathcal D(\mathfrak x_n^l)$ and $|\eta_k^{i,l}|$ can be described by
\begin{align*}
\begin{aligned}
& \mathcal D(\mathfrak x_n^l)\leq C^l_2(\omega) \exp\left(-(\Lambda_2+\varepsilon) n\right), \quad n \geq 0, \quad \quad \mbox{a.s.}, \\
&  |\eta_k^{i,l}|< \bar C(\omega)e^{\varepsilon k},\quad\forall~ k\geq0, \quad \mbox{a.s.}
\end{aligned}
\end{align*}
We apply these estimates to \eqref{E-4} and obtain
\begin{align*}
\begin{aligned}
 |x_{M}^{i,l}-  x_{n}^{i,l}|
&\leq \sum_{k=n}^{M-1}(\gamma + \bar C(\omega) e^{\varepsilon k}) C_2^l(\omega) e^{-(\Lambda_2+\varepsilon)k}
\leq {\widetilde C}(\omega) \sum_{k=n}^{M-1}e^{-\Lambda_2 k}\leq \frac{{\widetilde C}(\omega) e^{-\Lambda_2 n}}{1-e^{-\Lambda_2}},\quad\mbox{a.s.} 
\end{aligned}
\end{align*}
for some positive and bounded random variable ${\widetilde C}(\omega)$, independent of $l$. Now, sending $M \to \infty$ and using Lemma \ref{L5.1} give
\[  |x_{\infty}^{l}-  x_{n}^{i,l}| \leq \frac{{\widetilde C}(\omega) e^{-\Lambda_2 n}}{1-e^{-\Lambda_2}},\quad\mbox{a.s.} \]
This yields the desired estimate:
\[ \| \bold x_\infty - \bold x_n^i \|_\infty \leq  \frac{  {\widetilde C}(\omega) }{1-e^{-\Lambda_2}}e^{-\Lambda_2 n},\quad\mbox{a.s.}  \quad n \geq 0. \]

\end{proof}

%\begin{proof}
%From \eqref{D-4}, we have
%\begin{equation*}
%\begin{aligned}
% |x_{m}^{i,l}-  x_{n}^{i,l}|&\leq \sum_{k=n}^{m-1}(\gamma+| \eta_k^{i,l}|)\mathcal D(\bold x_k^{l}).
%\end{aligned}
%\end{equation*}
%Therefore, similar to Theorem \ref{T4.1}, the exponential convergence in Proposition \ref{P3.2} concludes the result.
%\end{proof}

\section{Application to optimization problem} \label{sec:6}
\setcounter{equation}{0}
In this section, we discuss how the consensus estimates studied in previous section can be applied to the optimization problem. Let $L$ be an objective function that we look for a minimizer of \newline
\[ \min_{\bold x \in \bbr^d} L(\bold x). \]
For the weighted representative point \eqref{A-2} with a nonempty set $S \subset {\mathcal N}$, we set
\begin{equation} \label{F-1}
 {\bar {\bold x}}_n^{S,*}:= \bold x_n^i\quad\mbox{with}~i=\min \{ \operatorname{argmin}_{j\in S}L({\bold x_n^j}) \}.
\end{equation}
In this setting, we specify one agent of the set $\operatorname{argmin}_{X^j:j\in S}L(X^j)$ \cite{C-J-L-Z}.
\begin{proposition}\label{P6.1}
Let ${\{ \bold x}^i_{n}\}$ be a solution process to \eqref{A-3} equipped with \eqref{F-1}. Then, for all $n\geq0$, the best guess among all agents at time $n$ has monotonicity of its objective value:
\[
\min_{1 \leq j \leq N}L(\bold x_{n+1}^j)\leq  \min_{1\leq j\leq N}L(\bold x_n^j).
\]
\end{proposition}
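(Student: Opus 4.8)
The plan is to exhibit a single agent whose position is left unchanged by the update step and whose objective value already realizes the current minimum; the monotonicity then follows at once and pathwise (for every realization $\omega$, not merely in expectation). The underlying mechanism is that, under the choice \eqref{F-1}, the representative point of a batch is itself one of the members of that batch, and such a member is a fixed point of the dynamics \eqref{A-3}.

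First I would fix $n$ and let $i_* := \min\{\operatorname{argmin}_{1\le j\le N} L(\bold x_n^j)\}$ denote the smallest-index global best agent at time $n$, so that $L(\bold x_n^{i_*}) = \min_{1\le j\le N} L(\bold x_n^j)$. Let $S := [i_*]_n$ be the batch containing $i_*$ and set $i_0 := \min\{\operatorname{argmin}_{j\in S} L(\bold x_n^j)\}$. Since $i_*\in S$ attains the global minimum, it also attains the minimum over $S$; hence $i_0$ is well defined and $L(\bold x_n^{i_0}) = L(\bold x_n^{i_*}) = \min_{1\le j\le N} L(\bold x_n^j)$.

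The key step is to observe that $i_0$ is the representative of its own batch. Indeed $i_0\in S$ forces $[i_0]_n = S$, and by \eqref{F-1} the representative point of $S$ is exactly ${\bar{\bold x}}_n^{[i_0]_n,*} = \bold x_n^{i_0}$. Substituting this into the update \eqref{A-3} for the index $i_0$, both the drift term $\gamma(\bold x_n^{i_0} - {\bar{\bold x}}_n^{[i_0]_n,*})$ and every noise term $(x_n^{i_0,l} - {\bar x}_n^{[i_0]_n,*,l})\eta_n^{i_0,l}$ vanish because the self-difference $\bold x_n^{i_0} - {\bar{\bold x}}_n^{[i_0]_n,*}$ is identically zero. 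Hence $\bold x_{n+1}^{i_0} = \bold x_n^{i_0}$, and therefore
\[
\min_{1\le j\le N} L(\bold x_{n+1}^j) \le L(\bold x_{n+1}^{i_0}) = L(\bold x_n^{i_0}) = \min_{1\le j\le N} L(\bold x_n^j).
\]

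The only point requiring care, and the main (modest) obstacle, is the consistency of the tie-breaking: one must ensure that the agent selected as the batch representative is genuinely the representative of its own batch, so that the self-difference above is exactly zero rather than merely small. This is precisely what the $\min$ in front of $\operatorname{argmin}$ in \eqref{F-1} provides, as it renders the representative-selection map idempotent on the chosen index. No probabilistic input enters, since the whole argument is realization-wise and holds for each fixed $\omega\in\Omega$.
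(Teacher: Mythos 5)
Your proof is correct and follows essentially the same route as the paper: identify the batch-best agent, observe via \eqref{F-1} that it is its own batch representative so both the drift and noise terms vanish, conclude $\bold x_{n+1}^{i_0}=\bold x_n^{i_0}$, and deduce monotonicity pathwise. Your only (minor) refinement is to work directly with the batch argmin $i_0$ rather than verifying, as the paper does, that the global smallest-index minimizer is itself the smallest-index minimizer within its own batch --- in fact $i_0=i_*$, so the two arguments coincide.
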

\begin{proof}
For a fixed $n$, let $i$ be the index satisfying 
\[ i=\min \{ \operatorname{argmin}_{1\leq j\leq N}L(\bold x_n^j) \}. \]
Then, it is easy to see 
\[
i=\min \{ \operatorname{argmin}_{j\in[i]_n}L(\bold x_n^j) \},\quad \mbox{i.e.,} \quad  {\bar {\bold x}}_n^{[i]_n,*} = \bold x_n^i.
\]
 For each $1\leq l\leq d$, we have
\begin{align*}
x^{i,l}_{n+1} &=(1-\gamma-\eta_{n}^{i,l} )x^{i,l}_n  +   (\gamma+\eta_{n}^{i,l} )\sum_{j\in[i]_n} \omega_{[i]_n,j}(\bold x_n^1,\dots, \bold x_n^N) x_n^{j,l}  \\
&=(1-\gamma-\eta_{n}^{i,l} )x^{i,l}_n  +   (\gamma+\eta_{n}^{i,l} ) x_n^{i,l} =x_n^{i,l} ,
\end{align*}
i.e., 
\[ \bold x_{n+1}^i= \bold x_n^i. \]
Hence, one has
\[
\min_{1\leq j\leq N}L(\bold x_{n+1}^j)\leq L(\bold x_{n+1}^i)=L(\bold x_{n}^i)=\min_{1\leq j\leq N}L(\bold x_n^j).
\]
\end{proof}
\begin{remark} Proposition \ref{P6.1} suggests that from the relation
\[
\min_{1\leq j\leq N}L(\bold x_n^j)=\min_{0\leq t\leq n,~1\leq j\leq N}L(\bold x_t^j),
\]
the monotonicity actually gives
\[
{\bar {\bold x}}_n^{\mathcal N,*}\in\operatorname{argmin}_{\bold x_n^j,~1\leq j\leq N}L(\cdot)=\operatorname{argmin}_{\bold x_n^j,~0\leq t\leq n,~1\leq j\leq N}L(\cdot).
\]
In other words, ${\bar {\bold x}}_n^{\mathcal N,*}$ is a best prediction for the minimizer of $L$, among the trajectories of all agents on time interval $[0,~n]$. In the particle swarm optimization (PSO, in short) literature \cite{K-E}, such point is called `global best', and this observation tells us that the algorithm \eqref{A-3} equipped with \eqref{F-1} may be considered as a simplified model of the PSO algorithm.
\end{remark}

Next, we provide numerical simulations for the CBO alglorithms with \eqref{F-1}. From Proposition \ref{P6.1}, this algorithm has monotonicity related to the objective function, which is useful to locate the minimum of $L$.
The CBO algorithms \eqref{A-3}--\eqref{F-1} will be tested with the Rastrigin function as in \cite{P-T-T-M}, which reads
\[ 
L(x) = \frac{1}{d}\sum_{i=1}^d \left[ (x_i-B_i)^2 - 10\cos(2\pi(x_i-B_i)) + 10 \right] + C,
\]
where $\mathbf{B}=(B_1,\dots,B_d)$ is the global minimizer of $L$. We set $C=0$ and $B_i=1$ for all $i$. The Rastrigin function has a lot of local minima near $B$, whose values are very close to the global minimum. The parameters are chosen similar to \cite{C-J-L-Z,P-T-T-M}:
\[ N = 100, \quad \gamma = 0.01, \quad \zeta = 0.5, \quad P=10,50, \quad d=2,3,4,5,6,7,8,9,10, \]
where the initial positions of the particles are distributed uniformly on the box $[-3,3]^d$.

Note that the objective function $L$ only affects the weighted average $\bar{\bold x}_n^{S,*}$ in the CBO dynamics \eqref{A-3}. 
Theorems \ref{T4.1} and \ref{T4.2} show that $\gamma$ and $\zeta$ mainly determine (a lower bound of) speed of convergence. One useful property of the CBO algorithm is that it always produces a candidate solution for the optimization problem (see Theorem \ref{T5.1}). However, the candidate may not even be a local minimum.
Therefore, following \cite{P-T-T-M}, we consider one simulation successful if the candidate is close to the global minimum $B$ in the sense that
\[ \|\bold x_n^i - \mathbf{B}\|_\infty < 0.25, \quad\mbox{with}\quad i=\min \left\{ \operatorname{argmin}_{1\leq j\leq N}L({\bold x_n^j}) \right\}.\]
The stopping criterion is made with the change of positions,
\[ \sum_{i=1}^N|\bold x_{n+1}^i - \bold x_{n}^i|^2 < \varepsilon, \quad \varepsilon = 10^{-3}. \]

\begin{figure}[h]
    \centering
    \includegraphics[width=0.45\textwidth]{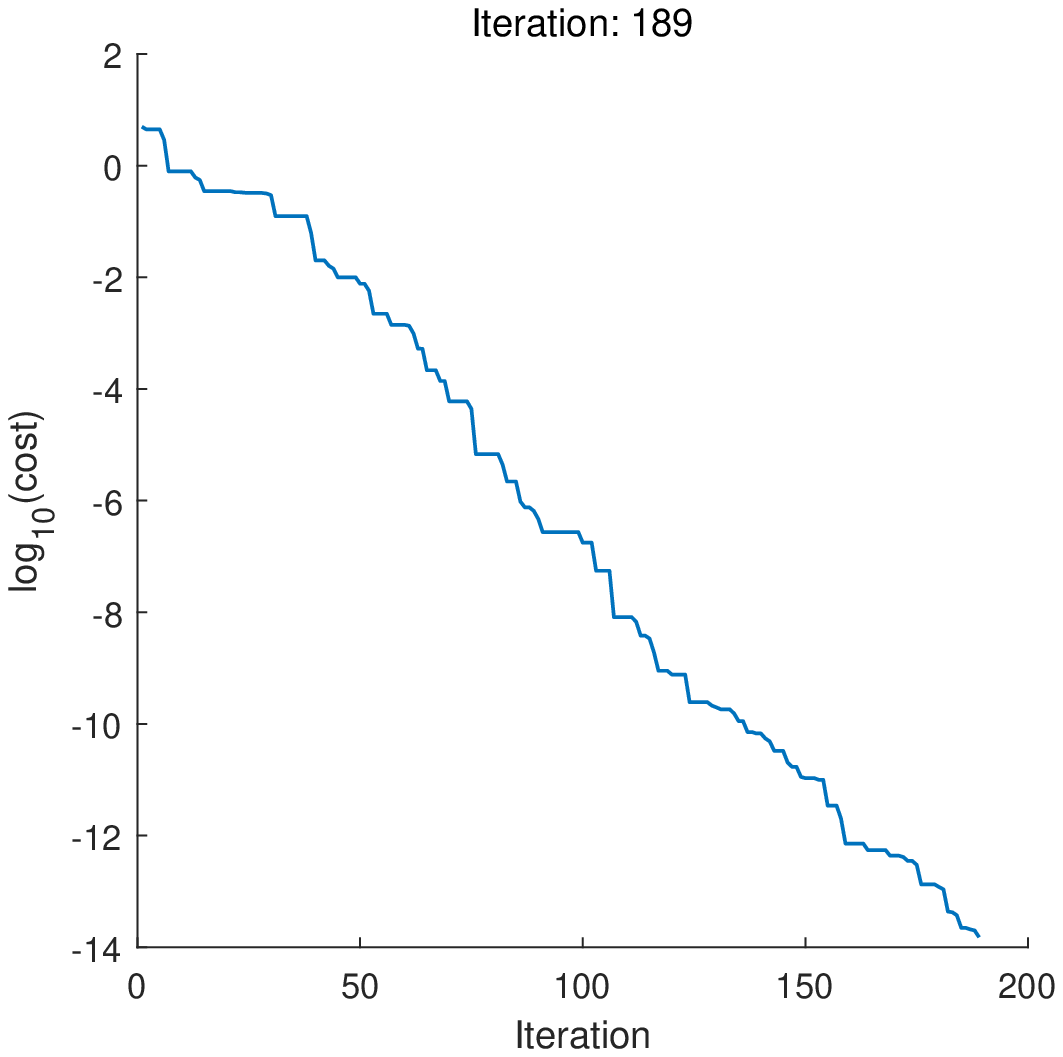}
    \includegraphics[width=0.45\textwidth]{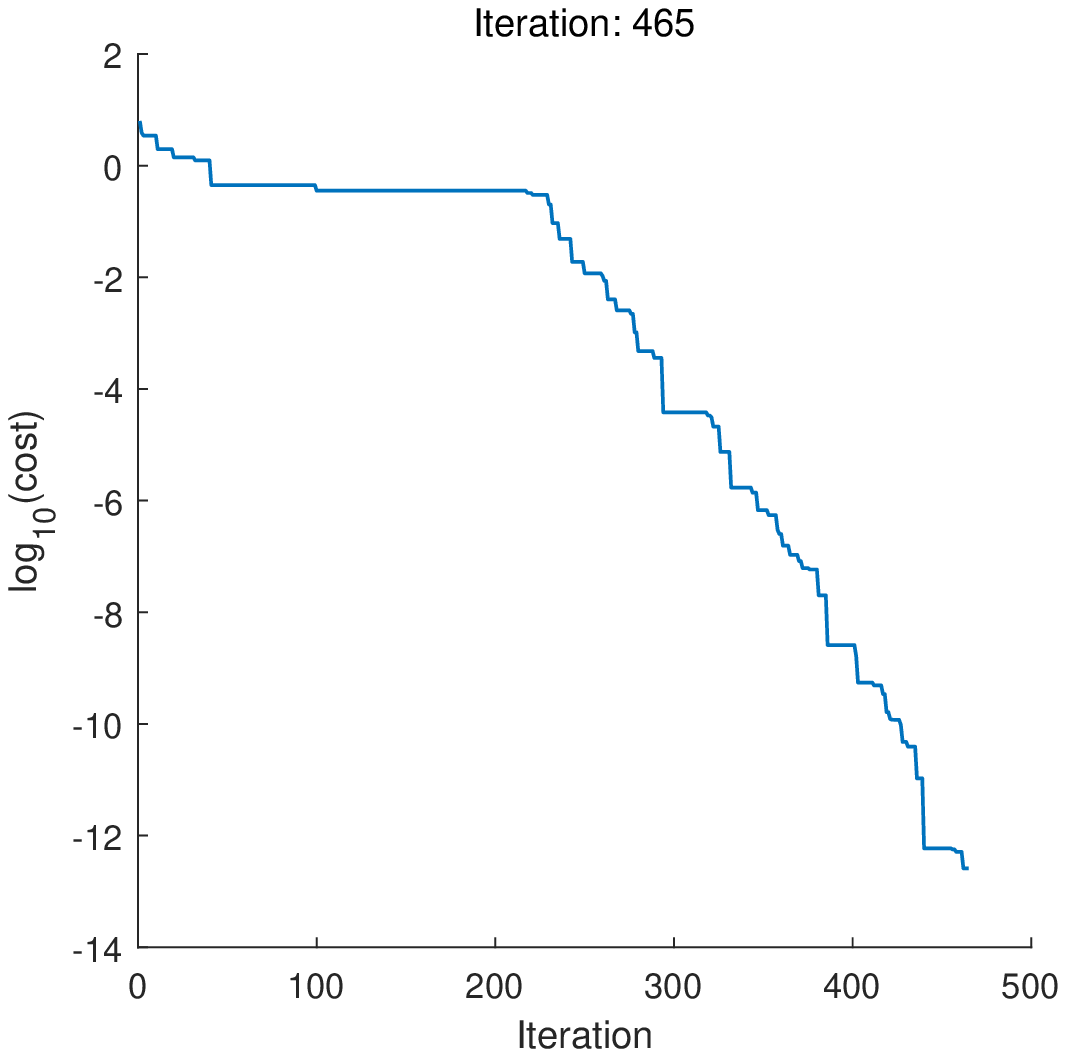}
    \caption{Time-evolution of the best objective value $\min_{j} L({\bold x}_n^j)$ in CBO algorithms with full batch (left) and random batches of $P=10$ (right) for $d=4$ and $N=100$.}
    \label{fig:1}
\end{figure}

Figure \ref{fig:1} shows the dynamics of the current guess of the minimum values in CBO algorithms with full batch (same as $P=100=N$) and random batches ($P=10$) for $d=4$. Since we choose the argument minimum for the weighted average $\bar{\bold x}_n^{S,*}$, the best objective value $\min_{j} L({\bold x}_n^j)$ is not increasing along the optimization step. This property cannot be observed when we use a weighted average as in \eqref{NB-1}. Of course, for large $\beta$, \eqref{NB-1} is close to the argument minimum \eqref{F-1}. We can also check that the speed of consensus is much slower if $P=10$ since the best objective value does not change for a long time (for example, from $n=100$ to $n=200$).

\begin{table}[h!]
\centering
\begin{tabular}{ |c||c|c|c|  }
 \hline
 Success rate & Full batch ($P=100$) & $P=50$ & $P=10$\\
 \hline
 d =  2 & 1.000 & 1.000 & 1.000 \\
 d =  3 & 0.988 & 0.983 & 0.998 \\
 d =  4 & 0.798 & 0.920 & 0.988 \\
 d =  5 & 0.712 & 0.658 & 0.931 \\
 d =  6 & 0.513 & 0.655 & 0.880 \\
 d =  7 & 0.388 & 0.464 & 0.854 \\
 d =  8 & 0.264 & 0.389 & 0.832 \\
 d =  9 & 0.170 & 0.323 & 0.868 \\
 d = 10 & 0.117 & 0.274 & 0.886 \\
% d = 20 & 0.0140 & 0.2860 & 1.0000 \\
 \hline
\end{tabular}
\vspace{0.5em}
\caption{Success rates of CBO algorithms with different dimensions of Rastrigin functions. $1000$ simulations are done for each algorithm.}
\label{table:1}
\end{table}

Table \ref{table:1} presents the success rates of finding the minima $\mathbf{B}$ for different dimensions and algorithms. It clearly shows that the rates get better for smaller $p$, due to the increased randomness. However, the computation with small $p$ takes more steps to converge, as shown in  Figure \ref{fig:2}.

\begin{figure}[h]
    \centering
    \includegraphics[width=0.8\textwidth]{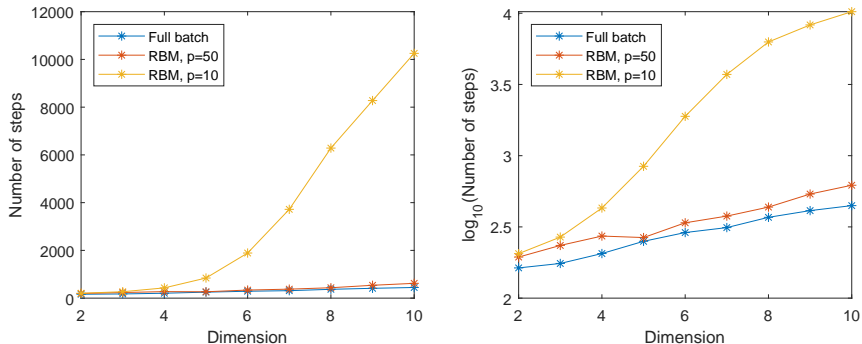}
    \caption{Average number of time steps from full batch and random batches. For each time step, computational time is similar for both methods to converge.}
    \label{fig:2}
\end{figure}
\begin{figure}[h]
    \centering
    \includegraphics[width=0.8\textwidth]{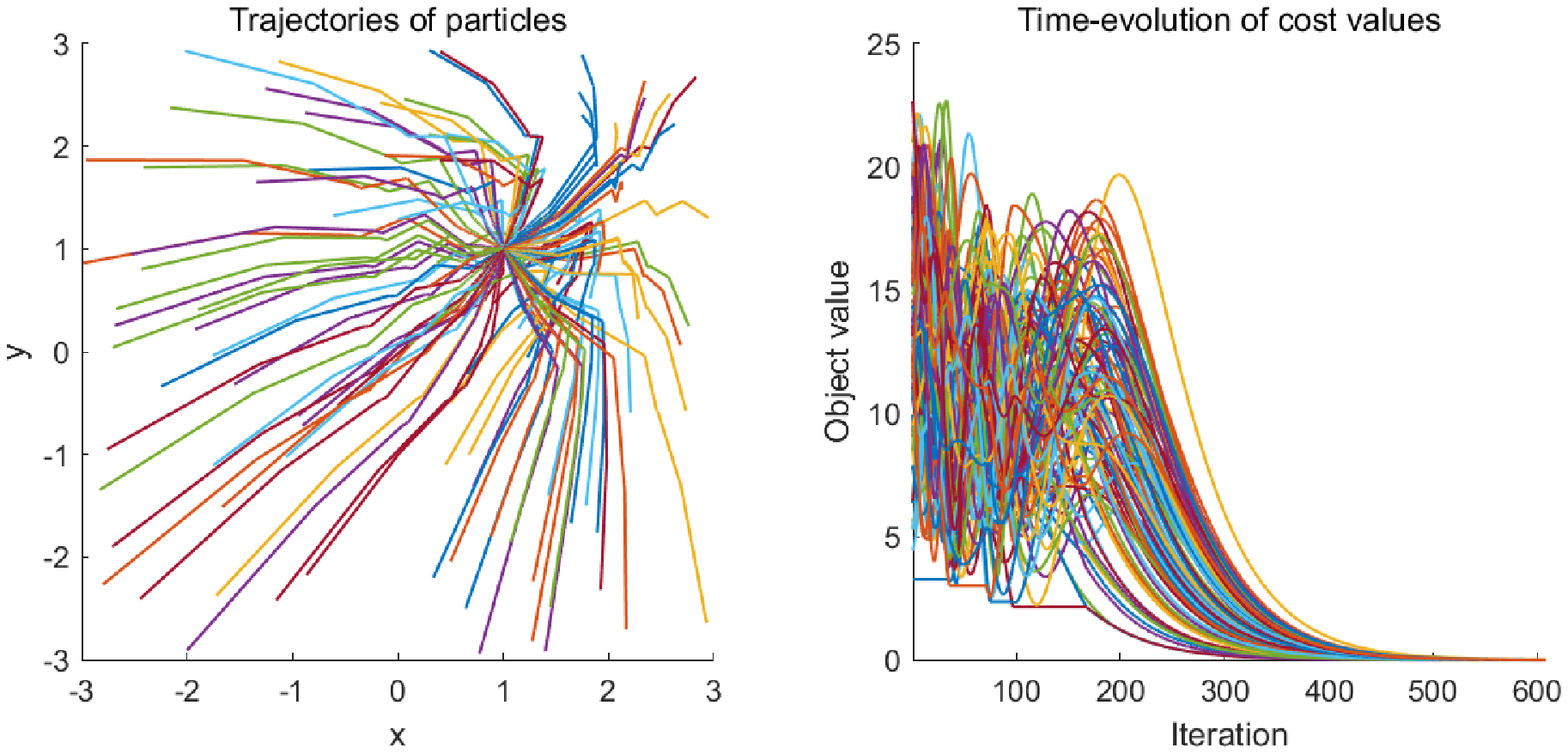}
    \includegraphics[width=0.8\textwidth]{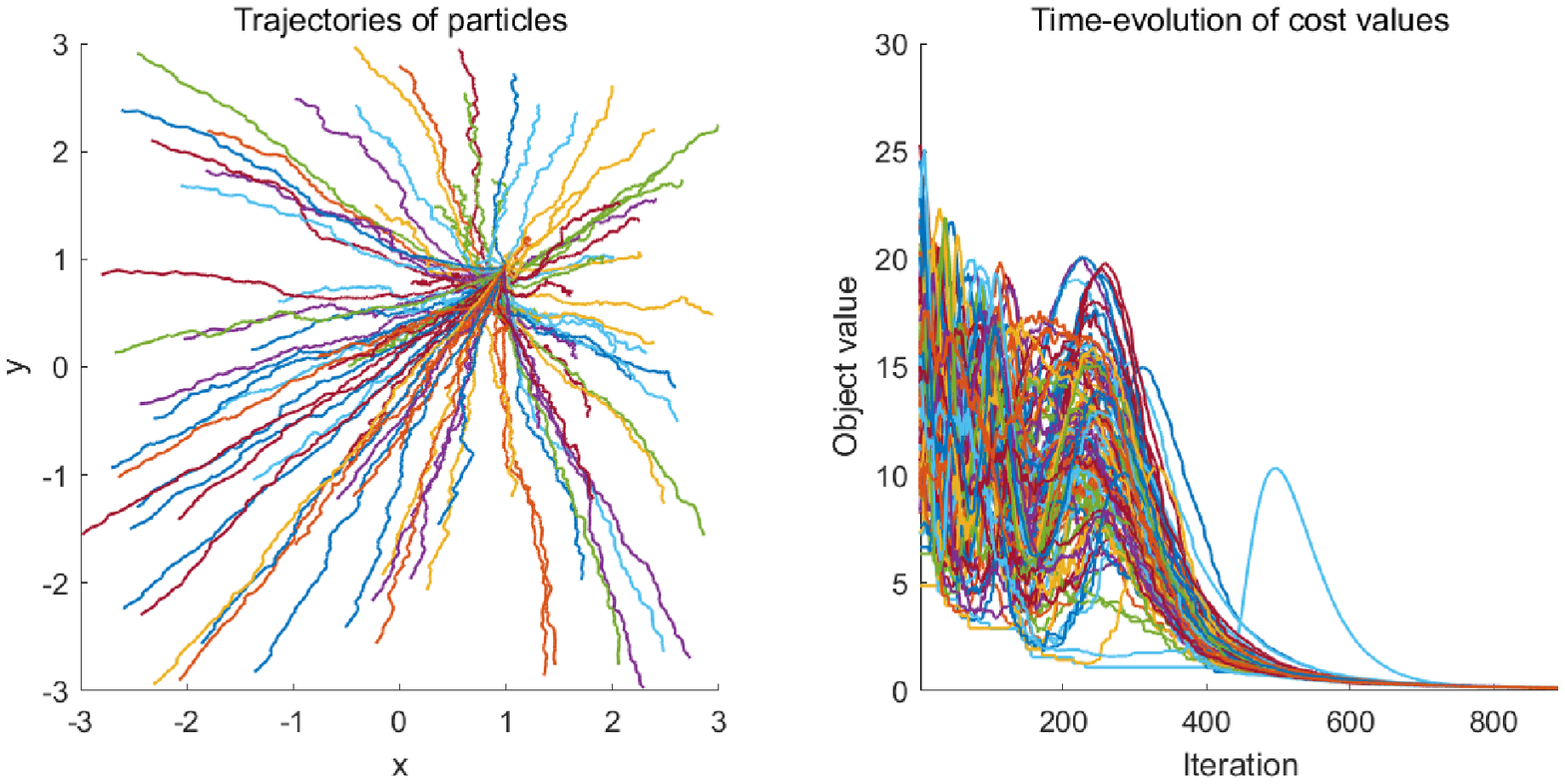}
    \caption{Trajectories of CBO particles (left) and time-evolution of the objective values (right) with $d=4$ without noise from full batch (above) and random batches of $P=10$ (below) represented in the first two dimensions. The trajectories are drawn with the first two components and the objective values are evaluated from $100$ particles. We choose a successful case of optimization for each method to compare convergent behaviors.}
    \label{fig:3}
\end{figure}

Figure \ref{fig:2} shows a weak point of the random batch interactions by showing the number of time steps needed to reach the stopping criterion. Note that the computational costs for each time step is similar for different $P$. However, as it slows down the convergence speed, %(as it presented in Theorem \ref{T4.2}),
 the number of time steps increases as $N/P$ increases.

Figure \ref{fig:3} shows sample trajectories of the CBO particles with full batch and random batches ($P=10$) for $d=4$ when there is no noise ($\zeta = 0$). If there is noise, then the trajectories nearly cover the area near $0$ so that we cannot catch the differences easily. Note that the random interactions make the particles move around the space. Contrary to the random batch interactions, the particles iwithfull batch dynamics move toward the same point, which is the current minimum position among the particles. This may explain that random batch interactions have better searching ability though it requires more computational cost.

\section{Conclusion} \label{sec:7}
\setcounter{equation}{0}
In this paper, we have provided stochastic consensus and convergence analysis for the discrete CBO algorithm with random batch interactions and heterogeneous external noises. Several versions of discrete CBO algorithm has been proposed to deal with large scale global non-convex minimization problem arising from, for example, machine learning prolems. Recently, thanks to it simplicity and derivative-free nature, consensus based optimization has received lots of attention from applied mathematics community, yet the full understanding of the dynamic features of the CBO algorithm is still far from complete. Previously, the authors investigated consensus and convergence of the modified CBO algorithm proposed in \cite{C-J-L-Z} in the presence of homogeneous external noises and full batch interactions and proposed a sufficient framework leading to the consensus and convergence in terms of system parameters. In this work, we extend our previous work \cite{H-J-Kd} with two new components (random batch interactions and heterogeneous external noises). To deal with random batch interactions, we recast the CBO algorithm with random batch interactions into the consensus model with randomly switching network topology so that we can apply well-developed machineries \cite{D-H-J-K1, D-H-J-K2} for the continuous and discrete Cucker-Smale flockng models using ergodicity coefficient measuring connectivity of network topology (or mixing of graphs). Our analysis shows that stochastic consensus and convergence emerge exponentially fast as long as the variance of external noises is sufficiently small.  Our consensus estimate yields the monotonicity of an objective function along the discrete CBO flow. One should note that  our preliminary result does not yield error estimate for the optimization problem. 
Thus, whether our asymptotic limit point of the discrete CBO flow stays close to the optimal point or not will be an interesting problem  for a future work.

\end{document}